\newtheorem{theo}{Theorem}
\newtheorem{coro}{Corollary}
\newtheorem{lemm}{Lemma}
\newtheorem{rema}{Remark}
\newtheorem{prop}{Proposition}
\newtheorem{conj}{Conjecture}
\newcommand{\Z}{{\mathbb Z}}
\newcommand{\R}{{\mathbb R}}
\renewcommand{\P}{{\mathbb P}}
\newcommand{\E}{{\mathbb E}}
\newcommand{\Vol}{\operatorname{Vol}}
\newcommand{\Span}{\operatorname{span}}
\newcommand{\tref}[1]{Theorem~\textup{\ref{#1}}}
\newcommand{\cref}[1]{Corollary~\textup{\ref{#1}}}
\newcommand{\lref}[1]{Lemma~\textup{\ref{#1}}}
\newcommand{\fref}[1]{Figure~\textup{\ref{#1}}}
\begin{document}

\title{On the density of sets avoiding parallelohedron distance $1$}

\thanks{This study has been carried out with financial support from the French State, managed by the French National Research Agency (ANR) in the frame of the "Investments for the future" Programme IdEx Bordeaux - CPU (ANR-10-IDEX-03-02)}



\author{Christine Bachoc \and Thomas Bellitto \and Philippe Moustrou \and Arnaud P\^echer
}


\address{Christine Bachoc, Institut de Math\'ematiques de Bordeaux, UMR 5251, Universit\'e de Bordeaux, 351 cours de la Lib\'eration, 33400 Talence, France.}
\email{christine.bachoc@u-bordeaux.fr}

\address{Thomas Bellitto, LaBRI, Universit\'e de Bordeaux, 351 cours de la Lib\'eration, 33400 Talence, France.}
\email{thomas.bellitto@u-bordeaux.fr}

\address{Philippe Moustrou, Institut de Math\'ematiques de Bordeaux, UMR 5251, Universit\'e de Bordeaux, 351 cours de la Lib\'eration, 33400 Talence, France.}
\email{philippe.moustrou@u-bordeaux.fr}

\address{Arnaud P\^echer, LaBRI, Universit\'e de Bordeaux, 351 cours de la Lib\'eration, 33400 Talence, France.}
\email{arnaud.pecher@u-bordeaux.fr}

\date{\today}

\subjclass{52C10, 52B11, 11H06}
\keywords{distance graphs, parallelohedra,  lattices, chromatic number}

\begin{abstract}
The maximal density of a measurable subset of $\R^n$
  avoiding Euclidean distance $1$ is unknown except in the trivial
  case of dimension $1$. In this paper, we consider the case of a
  distance associated to a polytope that tiles space, where it is
  likely that the  sets avoiding distance $1$ are of maximal density
  $2^{-n}$, as conjectured by Bachoc and Robins. We prove that this is true for $n=2$, and for the
  Vorono\"\i\  regions of the lattices $A_n$, $n\geq 2$.
\end{abstract}

\maketitle
\section{Introduction}

\thispagestyle{empty}

A set \textit{avoiding distance $1$} is a set $A$ in a normed vector space $(\R^n,\Vert  \cdot\Vert  )$ such that $\Vert  x-y\Vert  \neq 1$ for every $x,y\in A$. The number $m_1(\R^n,\Vert\cdot\Vert  )$ measures the highest proportion of space that can be filled by a set avoiding distance $1$. More precisely, $m_1(\R^n,\Vert \cdot\Vert  )$ is the supremum of the \textit{densities} (see Subsection \ref{prelimDP} for a precise definition) of Lebesgue measurable sets $A\subset \R^n$ avoiding distance $1$.

The problem of determining $m_1(\R^n,\Vert \cdot \Vert  )$ has been
mostly studied in the Euclidean case. The number
$m_1(\R^n)=m_1(\R^n,\Vert \cdot \Vert  _2)$ was introduced by Larman
and Rogers in \cite{MR0319055} as a tool to study the
\textit{measurable chromatic number} $\chi_m(\R^n)$ of $\R^n$, which is the minimal number of colors required to color $\R^n$ in such a way that two points at Euclidean distance $1$ have distinct colors, and that the color classes are measurable. 
Determining $\chi_m(\R^n)$ has turned out to be a very difficult problem, that has only been solved in dimension $1$, and that is wide 
open in any other dimension, including the familiar dimension $2$,
where it is only known that $5\leq \chi_m(\R^2)\leq 7$ (see
\cite{Falconer}, \cite{MR1954746}, and \cite[Chapter 3]{Soifer} for
a detailed historical account).

The connection between $m_1(\R^n)$ and $\chi_m(\R^n)$ lies in the following inequality:
$$ \chi_m(\R^n) \geq \frac{1}{m_1(\R^n)}, $$
so, from an upper bound for $m_1(\R^n)$, one obtains a lower bound for $ \chi_m(\R^n)$.

A natural approach to build a set avoiding distance $1$, that works for any norm, starts from a packing of unit balls. Let $\Lambda$ be a set such that if $x,y\in \Lambda$, then the unit open balls $B(x,1)$ and $B(y,1)$ do not overlap. Then the set $A=\cup_{\lambda \in \Lambda} B(\lambda,1/2)$ of disjoint balls of radius 1/2 is a set avoiding $1$ and its density is 
$\frac {\delta} {2^n}$ where $n$ is the dimension of the space and $\delta$ is the density of the packing. This construction is illustrated in \fref{PackBoules}.

\begin{figure}[!ht]
\includegraphics[scale=1.2]{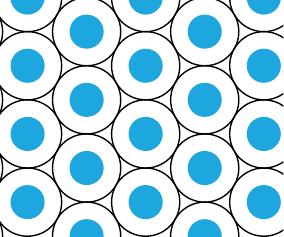}
\caption{A set avoiding distance 1 built from a sphere packing.\label{PackBoules}}
\end{figure}

In the Euclidean plane, the density of an optimal packing of discs of radius $1$ is $0.9069$ and this approach therefore provides a lower bound of $0.9069/4=0.2267$ for $m_1(\R^2,\Vert\cdot\Vert_2)$. 
The best known construction is not much better than that: by refining this idea, Croft manages to build in \cite{Croft} a set of density $0.2293$, which is an arrangement of balls cut out by hexagons.

Regarding upper bounds, Erd{\H o}s conjectured (see \cite{MR1954746}) that
$$m_1(\R^2)<\frac{1}{4}.$$
The best upper bound up to now is due to 
Keleti, Matolcsi, de Oliveira Filho and Ruzsa \cite{Keleti2016}, who have shown $m_1(\R^2)\leq 0.258795$.
Moser, Larman and Rogers (see \cite{MR0319055}) generalized Erd{\H o}s' conjecture to higher dimensions: for every $n \geq 2$,
$$m_1(\R^n)<\frac{1}{2^n}.$$
A weaker result has been proved in \cite{Keleti2016}: a set avoiding distance $1$ necessarily has a density strictly smaller than $\frac 1 {2^n}$ if it has a \textit{block structure}, \textit{i.e.} if it may be decomposed as a disjoint union $A=\cup A_i$ such that if $x$ and $y$ are in the same block $A_i$ then $\Vert   x-y \Vert   <1$ and if they are not, $\Vert   x-y \Vert   >1$. However, without this assumption, the known upper bounds are pretty far from $2^{-n}$, even asymptotically: the best asymptotic bound is $m_1(\R^n)\leq (1+ o(1))(1.2)^{-n}$ (see  \cite{MR0319055}, \cite{MR3341578}).

Going back to the general case of an arbitrary norm, we make the remark that if the unit ball tiles $\R^n$ by translation, the method described previously to build a set avoiding distance 1 from a packing provides a set of density exactly $1/2^n$, as illustrated in Figure \ref{PackHexa}. Moreover, it is likely that this construction of a set avoiding distance 1 is optimal, as conjectured by Bachoc and Robins:
\begin{figure}[!ht]
\includegraphics[scale=0.7]{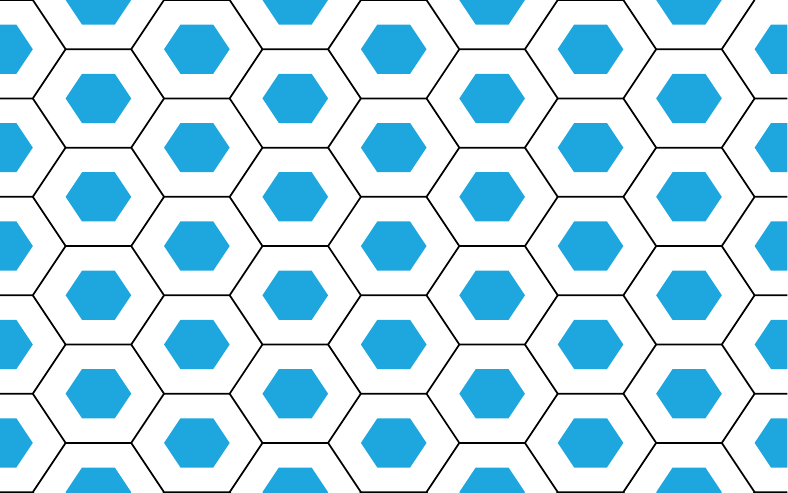}
\caption{The natural construction of density $1/2^n$.\label{PackHexa}}
\end{figure}

\begin{conj}[Bachoc, Robins]
If $\Vert \cdot \Vert$ is a norm such that the unit ball tiles $\R^n$ by translation, then
$$ m_1(\R^n,\Vert \cdot \Vert )= \frac{1}{2^n}. $$
\end{conj}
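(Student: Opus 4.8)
The lower bound $m_1(\R^n,\|\cdot\|)\ge 2^{-n}$ is the construction already recalled (pass to the open half-scaled tiles so that distance $1$ is avoided exactly), so the plan concerns only the upper bound. Fix the parallelohedron $K$ (the unit ball) and a lattice $\Lambda$ with $K+\Lambda$ a tiling, and write $\Sigma=\partial K$ for the unit sphere. First I would reduce to a compact problem: by a standard periodization argument one may assume $A$ is invariant under a full-rank lattice, and, after intersecting with $\Lambda$, under a full-rank sublattice $L\subseteq\Lambda$; it is then cleanest to work on the torus $\T=\R^n/L$, where avoiding distance $1$ becomes ``$\bar A\subseteq\T$ measurable with $(\bar A-\bar A)\cap\bar\Sigma=\emptyset$'', $\bar\Sigma$ being the image of $\Sigma$ in $\T$. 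Since $\Vol(\T)=[\Lambda:L]\,\Vol(K)$, it suffices to show $\Vol(\bar A)\le 2^{-n}\Vol(\T)$.

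The engine is a Delsarte-type linear programming bound, clearest in its ``fractional clique'' form. The autocorrelation $g(t)=\Vol\bigl(\bar A\cap(\bar A+t)\bigr)$ is continuous, non-negative, has non-negative Fourier coefficients, and vanishes identically on $\bar\Sigma$. Pairing $g$ with the signed measure $\delta_0-\mu$, where $\mu$ is any centrally symmetric \emph{probability} measure carried by $\bar\Sigma$, annihilates the $\bar\Sigma$-part, and with Parseval plus $\widehat g\ge 0$ this yields the implication: \emph{if $\widehat\mu(\xi)\ge -\tfrac1{2^n-1}$ for every non-trivial character $\xi$ of $\T$, then $\Vol(\bar A)\le 2^{-n}\Vol(\T)$.} So the whole problem reduces to producing one such measure on $\bar\Sigma$; moreover, since the construction is extremal, $\mu$ is forced to be tight, namely $\widehat\mu(\xi)=-\tfrac1{2^n-1}$ at every non-zero $\xi$ with $\widehat{\mathbf 1_{\frac12K}}(\xi)\ne 0$, and this constraint is what should pin $\mu$ down.

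For the cube $K=[-1,1]^n$ the certificate is transparent: the $2^n$ vertices of $[0,1]^n$ form a clique in the distance graph, and $\mu\propto\sum_{v\ne w}\delta_{v-w}$ over that vertex set $C$ is carried by $\bar\Sigma$ with $\widehat\mu(\xi)=\frac{|\widehat{\mathbf 1_C}(\xi)|^2-2^n}{2^n(2^n-1)}\ge -\tfrac1{2^n-1}$, tight exactly where it must be --- this is just the ``$2^n$ pairwise disjoint translates of $A$'' argument. For a general parallelohedron no clique of size $2^n$ need exist (the regular hexagon already has clique number $3$), so the certificate has to be found by other means, and that is where the general conjecture is stuck. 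In dimension $2$ the parallelohedra are, up to a linear change of coordinates, the parallelogram --- which becomes the square, hence the cube case --- and the centrally symmetric hexagons (a genuine family). For a hexagonal tile I would search for $\mu$ among the centrally symmetric measures on $\partial K$ built from point masses at the six vertices and at the edge midpoints together with arclength on the edges, use the tightness constraint to solve for the weights, and then check the finitely many inequalities $\widehat\mu(\xi)\ge-\tfrac13$ over the characters of $\R^2/L$. For the Voronoi cell $K$ of $A_n$ the same scheme applies with the large symmetry group of signed coordinate permutations doing most of the work: the characters of $\R^n/L$ (with $L\subseteq A_n$) are governed by the dual lattice $A_n^*$ and fall into a few orbits, the explicit facet description of the Voronoi cell of $A_n$ dictates the natural support of $\mu$, and the verification $\widehat\mu(\xi)\ge-\tfrac1{2^n-1}$ collapses to a manageable family of combinatorial sums.

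The crux --- in every case beyond the cube --- is the construction and Fourier-certification of $\mu$. One must identify the right support, which (for want of a large clique) genuinely involves lower-dimensional faces of $\partial K$ rather than finitely many points; fix the weights so that the bound lands \emph{exactly} on $2^{-n}$ with no slack; and then bound \emph{all} the Fourier coefficients from below --- a delicate trigonometric estimate for the hexagon, a summation over $S_{n+1}$-orbits for $A_n$. It is precisely the lack of a uniform grip on the facet combinatorics of an arbitrary parallelohedron, and on the correct extremal measure on its boundary, that keeps the Bachoc--Robins conjecture open in general.
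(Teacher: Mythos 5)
You are attempting to prove what the paper itself states only as a conjecture: the paper proves it in dimension $2$ and for the Vorono\"\i\ regions of $A_n$, and by a completely different, purely combinatorial route (a discrete induced subgraph of the unit distance graph plus an auxiliary Cayley graph in which graph distance $2$ forces polytope distance $1$, so that a $1$-avoiding set decomposes into cliques with pairwise disjoint closed neighborhoods, whose local densities are then counted by hand and bounded by $2^{-n}$). Your route is Fourier-analytic: periodize, pass to a torus, and certify the bound by a Delsarte/Hoffman-type test measure $\mu$ supported on the image of $\partial K$ with $\widehat\mu(\xi)\ge -\tfrac1{2^n-1}$ at all nontrivial characters. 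That framework is set up correctly, and your cube certificate (the clique of $2^n$ vertices) is complete and is essentially the same observation as the paper's $\Z^n$ case.

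The genuine gap is that beyond the cube the certificate is never produced. For a hexagonal tile and for $A_n$ --- precisely the cases that are actually provable, and that the paper does prove --- you only describe an ansatz for $\mu$ (vertex masses, edge-midpoint masses, arclength; respectively an orbit decomposition under the symmetry group of $A_n$), state the tightness constraints it should satisfy, and assert that the remaining inequalities are ``manageable''; no measure is exhibited and not a single Fourier coefficient is bounded. Moreover, the existence of such a tight LP certificate supported on $\partial K$ is itself a nontrivial claim with no a priori guarantee: as you note, the clique number can be as small as $3$ for a hexagon, and nothing in the proposal rules out that the LP value with this restricted support exceeds $2^{-n}$ for some hexagons, in which case the scheme would simply fail there. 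The periodization reduction also needs an argument (a $1$-avoiding set of density close to $m_1$ must be replaced by a periodic $1$-avoiding set with controlled density loss), though that part is standard. As it stands, the proposal establishes the lower bound and the cube case only; it proves neither the general conjecture (which remains open, also in the paper) nor the planar and $A_n$ cases, for which the paper's clique-decomposition argument remains the only complete proof here.
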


In this paper, we prove Conjecture 1 in dimension $2$:

\begin{theo}\label{TheoPlan}
If $\Vert \cdot \Vert$ is a norm such that the unit ball tiles $\R^2$ by translation, then
$$ m_1(\R^2,\Vert \cdot \Vert )= \frac{1}{4}. $$
\end{theo}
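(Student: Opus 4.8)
The plan is to exploit the classification of planar translation tiles: by a classical theorem, a convex body that tiles $\R^2$ by translation is either a (possibly degenerate) centrally symmetric hexagon or a parallelogram, and after an affine change of coordinates the parallelogram case reduces to the square (i.e.\ the $\ell^\infty$ norm). Since $m_1$ is invariant under invertible linear maps (they preserve the notion of "distance $1$" up to rescaling the body, and preserve density ratios), it suffices to treat the regular hexagon and the square separately. The lower bound $m_1 \geq 1/4$ is already given by the ball-packing construction described in the introduction, so the entire content is the upper bound $m_1(\R^2,\|\cdot\|) \leq 1/4$.

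\textbf{The averaging / autocorrelation argument.} For the upper bound I would use the standard weighted-average technique adapted to this tiling norm. Let $A \subseteq \R^2$ be a measurable set avoiding distance $1$, normalized (by a standard reduction) to be periodic with respect to some lattice $L$, so $\operatorname{density}(A) = \Vol(A \cap P)/\Vol(P)$ for a fundamental domain $P$. Choose a finite set of translation vectors $v_1,\dots,v_k$, each of norm exactly $1$, such that the points $0, v_1, \dots, v_k$ are pairwise at distance $1$ in the norm $\|\cdot\|$ — in the hexagonal case one naturally takes the vertices of a small "flower," in the square case the vertices of a unit square — together with the empty set as the unique independent set among $\{0,v_1,\dots,v_k\}$-translates of a point that can lie in $A$ of maximum allowable size. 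Concretely: find a configuration $S = \{s_0,\dots,s_m\}$ that is a clique in the distance-$1$ graph of $(\R^2,\|\cdot\|)$, of size $m+1$; then for a.e.\ $x$ the set $\{i : x + s_i \in A\}$ has at most one element, so $\sum_i \mathbf 1_A(x+s_i) \le 1$; integrating over a period and dividing by $\Vol(P)$ gives $(m+1)\,\operatorname{density}(A) \le 1$. For this to yield $\le 1/4$ we need a clique of size $4$ in the distance-$1$ graph of the tiling norm.

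\textbf{Finding the size-$4$ clique — the crux.} The main obstacle, and the heart of the proof, is producing such a $4$-clique (an equilateral-in-$\|\cdot\|$ "simplex" of $4$ points in the plane) for every hexagonal or square norm, OR, if that is impossible, replacing the naive clique bound by a fractional/LP relaxation. For the square norm $\ell^\infty$, the four points $(0,0),(1,0),(0,1),(1,1)$ are pairwise at $\ell^\infty$-distance $1$, so a $4$-clique exists and the argument closes immediately. For a general centrally symmetric hexagon the situation is more delicate: a true $4$-clique need not exist for every hexagon, so I expect the real work is to run a \emph{linear programming / fractional relaxation}: assign nonnegative weights to a family of translates so that the corresponding "fractional clique" inequality forces $\operatorname{density}(A)\le 1/4$. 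Equivalently, one builds an explicit feasible solution to the dual of the measurable-independence LP — a measure $\mu$ supported on distance-$1$ pairs, invariant under the symmetries of the hexagon, with total mass calibrated so that $\int \mathbf 1_A * \mathbf 1_{-A}\, d\mu$ considerations pin the density. I would use the $6$-fold (dihedral) symmetry of the regular hexagon to reduce the LP to very few variables, exhibit the optimal weights by hand, and verify the inequality $\operatorname{density}(A) \le 1/4$ by a short computation; the affine-invariance remark then upgrades this from the regular hexagon to every hexagonal tile. Checking that the extremal configuration (the density-$1/4$ construction of Figure~\ref{PackHexa}) is tight for these weights serves as the sanity check that the chosen LP solution is optimal.
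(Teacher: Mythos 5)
There is a genuine gap, in fact two. First, your reduction of the hexagonal case to the \emph{regular} hexagon is false: hexagonal parallelohedra form a multi-parameter family modulo invertible linear maps, so affine invariance of $m_1$ does \emph{not} let you treat only the regular hexagon (the paper states this explicitly at the start of Subsection \ref{SoussecHexaIrreg}, and this is precisely why it needs a separate, more delicate argument for general hexagonal Vorono\"\i\ regions: the vertices of a general hexagon do not even span a lattice, so the construction used for the regular hexagon breaks down). Second, even for the regular hexagon your proposal contains no proof of the upper bound. You correctly observe that the plain clique/averaging bound works for the square (the $4$-clique $\{(0,0),(1,0),(0,1),(1,1)\}$ in $\ell^\infty$, exactly as in the paper), and that for a hexagonal norm a $4$-clique in the unit distance graph is not available --- indeed one can show that equilateral sets of size $4$ exist in a normed plane only when the unit ball is a parallelogram, so the clique bound tops out at $1/3$ --- but at the crucial point you only assert that a suitable fractional relaxation ``should'' exist, with weights to be ``exhibited by hand.'' No configuration, weights, or verification is given, and producing such an object is exactly the content of the theorem, not a routine LP computation.

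For comparison, the paper's route is a concrete realization of the refinement you gesture at, but it is not a finite weighted clique inequality. It takes an infinite discrete induced subgraph $G_{\mathcal P}$ of the unit distance graph (for the general hexagon: the half-lattice $\frac12 L$ together with the $\frac12 L$-translates of the hexagon's vertices), bounds $m_1$ by the independence ratio $\bar\alpha(G_{\mathcal P})$ (Lemma \ref{LemmeSousGraphe}, which is the rigorous form of your averaging step), and then controls $\bar\alpha(G_{\mathcal P})$ via an auxiliary graph $\tilde G$ satisfying a weakened form of \eqref{Property D}: any two vertices at $\tilde G$-distance $2$ sharing a common neighbor in the class $B$ are at polytope distance $1$. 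This forces any $1$-avoiding subset to split into small connected components whose $B$-neighborhoods are pairwise disjoint (Lemma \ref{HexaIrregVoisDisjoints}); an enumeration shows each component has local density at most $3/8$ in $B$ (Lemma \ref{HexaIrregDelta}), and since $B$ has relative density $2/3$ in the vertex set, the bound $\frac23\cdot\frac38=\frac14$ follows. If you want to salvage your plan, you must either carry out this kind of explicit local analysis for \emph{every} hexagonal tile (not just the regular one), or actually construct and verify the dual/fractional certificate you invoke, again uniformly in the hexagon's shape.
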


Recall that the only convex bodies that tile space by translation are the \textit{parallelohedra}, \textit{i.e.} the polytopes that admit a face-to-face tiling by translation. For a given parallelohedron $\mathcal{P}$, we denote by $\Vert \cdot\Vert  _\mathcal{P}$ the norm whose unit ball is  $\mathcal{P}$.

The Vorono\"\i\  region of a lattice is a parallelohedron. Conversely, Vorono\"\i\  conjectured that all parallelohedra are, up to affine transformations, the Vorono\"\i\  regions of lattices (see Subsection \ref{prelimVoro}). On the other hand, $m_1(\R^n, \Vert\cdot\Vert)$ is clearly left 
unchanged under the action of a linear transformation applied to the
norm. So, in the light of Vorono\"\i's conjecture, it is natural to
consider in first place the polytopes that are Vorono\"\i\ regions of lattices.

The most obvious family of  lattices is the  family of cubic lattices $\Z^n$, whose
Vorono\"\i\ regions are hypercubes. We will see that in this case,
Conjecture 1 holds trivially. The next families of lattices to consider
are arguably the root lattices $A_n$ and $D_n$, where
\begin{equation*}
A_n=\{x\in \Z^{n+1} \mid \sum_{i=1}^{n+1} x_i=0\} \quad (n\geq 2).
\end{equation*}
and 
\begin{equation*}
D_n=\{x\in \Z^n\mid \sum_{i=1}^n x_i\equiv 0\bmod
2\} \quad (n\geq 4).
\end{equation*}

We will prove
Conjecture 1 for the Vorono\"\i\ regions of the lattices $A_n$ in every dimensions $n\geq
2$. 
For the lattices $D_n$, 
we can only show the inequality
$$m_1(\R^n,\Vert   \cdot\Vert  _\mathcal{P})\leq \frac{1}{(3/{4})2^n+n-1}$$
which is however asymptotically of the order $O\left(\frac 1 {2^n}\right)$.

Let us now give an idea of the method that we use to prove these results.
The strategy is to transfer the study of sets avoiding distance $1$ to a discrete setting, in which such
sets can be decomposed as the disjoint union of small pieces (in other
words they afford a kind of block structure). Computing the optimal density of a set avoiding distance $1$ in the
discrete setting amounts then to understanding how these blocks fit
together locally. 

To be more precise, we consider  discrete subsets $V$ of $\R^n$,
seen as induced subgraphs of the \emph{unit distance graph}
$G(\R^n,\Vert \cdot \Vert)$. This is the graph whose vertices are the points of $\R^n$
and whose edges connect the vertices $x$ and $y$ if and only if $\Vert
x-y \Vert=1$.

If $G=(V,E)$ is a finite induced subgraph of $G(\R^n,\Vert \cdot
\Vert)$, then it is well known that (see \cite{MR0319055})
$$m_1(\R^n,\Vert \cdot \Vert) \leq \frac{\alpha(G)}{|V|}, $$
where as usual $\alpha(G)$ denotes the \textit{independence number} of
$G$ and $|V|$ is the number of its vertices. We use a generalization
of this inequality to discrete graphs (see Subsection
\ref{prelimGraph}). Of course, the most difficult task is to design an
appropriate discrete subset $V$, i.e. one that provides a good
upper bound  of $m_1(\R^n,\Vert \cdot \Vert)$ and at the same time is
easy to analyse.

For the regular hexagon in the plane, we follow an idea due to  Dmitry Shiryaev \cite{Dmitry} 
who proposed an auxiliary graph satisfying the following remarkable property: if two points $x$ and $y$ are at graph distance $2$, then they are at polytope distance 1. This implies that a set avoiding polytope distance $1$ is a union of cliques whose closed neighborhoods are disjoint. The density of such a set is bounded by the supremum of the \textit{local densities} of the cliques in their closed neighborhood. 
In the case of a general hexagonal Vorono\"\i\  cell in the plane, this approach doesn't work straightforwardly and  we need to introduce a different graph with a slightly weaker property.
The construction of such an auxiliary graph is also a key ingredient of our proofs of the bounds for the Vorono\"\i\  regions of $A_n$ and $D_n$. 

The paper is organized as follows: Section \ref{prelims} contains
preliminaries. In Section \ref{Plane}, we prove
\tref{TheoPlan}. Section \ref{SectionFamilles} is dedicated to the
families of lattices  $A_n$ (Theorem \ref{AnTheo}) and $D_n$ (Theorem
\ref{DnTheo}). In Section \ref{SecChi}, we discuss the chromatic
number of the unit distance graph $G(\R^n, \Vert\cdot\Vert_{\mathcal  P})$. 

We provide in Appendix \ref{AppA}, the rather technical proof of Lemma \ref{alpha1=alpha2}, which gives an alternate definition of the maximal density of an independent set of a discrete graph 
whose vertices have finite degrees. 

\section{Preliminaries}\label{prelims}

\subsection{The density of a set avoiding polytope distance 1}\label{prelimDP}

Let $\R^n$ be equipped with a norm $\Vert   \cdot \Vert  $.  A set $S\subset \R^n$ is said to \textit{avoid 1} if for every $x,y\in S$, $d(x,y)=\Vert   x-y \Vert  \neq 1$. We define the \textit{density} of a measurable set $A\subset \R^n$ with respect to Lebesgue measure as: 

$$\delta(A)=\limsup_{R\to\infty} \frac{\Vol(A\cap [-R,R]^n)}{\Vol([-R,R]^n)},$$
 and we denote by $m_1(\R^n,\Vert   \cdot\Vert  )$ the supremum of the densities achieved by measurable sets avoiding distance 1:

$$m_1(\R^n,\Vert   \cdot\Vert  )=\sup_{ \substack{ S\subset \R^n \text{measurable} \\ S \text{ avoiding } 1}}  \delta(S).$$

Let $\mathcal{P}$ be a convex symmetric polytope. The norm $\Vert   \cdot\Vert  _\mathcal{P}$ associated with $\mathcal{P}$ is defined by
$$\Vert   x\Vert  _\mathcal{P}=\inf\{ \lambda \in \R_+\mid   x \in \lambda \mathcal{P} \},$$
and we call \textit{polytope distance} the distance induced by $\Vert \cdot \Vert_\mathcal{P}$.

If $B_\mathcal{P}(r)=\{ x\in\R^n\mid \Vert  x\Vert  _\mathcal{P}<r\}$, we have by definition:
$$x\in B_\mathcal{P}(1) \Leftrightarrow x \in \mathring{\mathcal{P}}\  \text{and}\ \Vert  x\Vert  _\mathcal{P}=1 \Leftrightarrow x \in \partial\mathcal{P},  $$
where $\mathring{\mathcal{P}}$ denotes the interior of $\mathcal{P}$ and $\partial\mathcal{P}$ its boundary.

A polytope $\mathcal{P}$ \textit{tiles} $\R^n$ \textit{by translations} if there exists $\Lambda\subset \R^n$ such that ${\bigcup_{\lambda \in \Lambda} (\lambda + \mathcal{P}) =\R^n}$ and for every $\lambda \neq \lambda '$,  $(\lambda + \mathring{\mathcal{P}} ) \cap (\lambda' + \mathring{\mathcal{P}}) = \emptyset$. 
If $\mathcal{P}$ is such a polytope, the set 
$$A=\bigcup_{\lambda \in \Lambda} (\lambda + \frac{1}{2}\mathring{\mathcal{P}})$$
avoids 1, and has density $\frac{1}{2^n}$. This set gives a lower bound for $m_1$:

\begin{prop}\label{borneinf}
If $\mathcal{P}$ is a polytope tiling $\R^n$ by translation, and $\Vert   \cdot\Vert  _\mathcal{P}$ the norm associated with $\mathcal{P}$, then 
$$m_1(\R^n,\Vert   \cdot\Vert  _\mathcal{P})\geq \frac{1}{2^n}.$$
\end{prop}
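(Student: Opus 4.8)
The statement to prove is Proposition~\ref{borneinf}: if $\mathcal{P}$ is a polytope tiling $\R^n$ by translation, then $m_1(\R^n,\Vert\cdot\Vert_{\mathcal P})\geq 2^{-n}$. This is the "easy direction" — we simply need to exhibit an explicit measurable set avoiding distance $1$ whose density equals $2^{-n}$, namely the set $A=\bigcup_{\lambda\in\Lambda}(\lambda+\tfrac12\mathring{\mathcal P})$ already described in the text.

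\begin{proof}
Let $\Lambda\subset\R^n$ be a set of translation vectors witnessing that $\mathcal P$ tiles $\R^n$, so that $\bigcup_{\lambda\in\Lambda}(\lambda+\mathcal P)=\R^n$ and $(\lambda+\mathring{\mathcal P})\cap(\lambda'+\mathring{\mathcal P})=\emptyset$ whenever $\lambda\neq\lambda'$. Set
$$A=\bigcup_{\lambda\in\Lambda}\bigl(\lambda+\tfrac12\mathring{\mathcal P}\bigr),$$
which is measurable as a countable union of open sets (the tiling $\{\lambda+\mathcal P\}$ is locally finite, since each $\lambda+\mathcal P$ has nonempty interior and the interiors are disjoint, so $\Lambda$ is countable).

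\textit{$A$ avoids distance $1$.} Let $x,y\in A$, say $x\in\lambda+\tfrac12\mathring{\mathcal P}$ and $y\in\mu+\tfrac12\mathring{\mathcal P}$ for some $\lambda,\mu\in\Lambda$. Write $x=\lambda+\tfrac12 u$ and $y=\mu+\tfrac12 v$ with $u,v\in\mathring{\mathcal P}$, i.e. $\Vert u\Vert_{\mathcal P}<1$ and $\Vert v\Vert_{\mathcal P}<1$. If $\lambda=\mu$ then $\Vert x-y\Vert_{\mathcal P}=\tfrac12\Vert u-v\Vert_{\mathcal P}\leq\tfrac12(\Vert u\Vert_{\mathcal P}+\Vert v\Vert_{\mathcal P})<1$, so $\Vert x-y\Vert_{\mathcal P}\neq 1$. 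If $\lambda\neq\mu$, suppose for contradiction that $\Vert x-y\Vert_{\mathcal P}=1$. Then $\lambda-\mu=(y-x)+\tfrac12(u-v)$, and one checks that $\lambda-\mu$ lies in the interior of $2\mathcal P$: indeed, using that the midpoint $\tfrac12(x+y)$ lies in $\tfrac12(\lambda+\mathring{\mathcal P})\cap\tfrac12(\mu+\mathring{\mathcal P})$ translated appropriately — more directly, $\lambda-\mu = (\lambda+\tfrac12 u)-(\mu+\tfrac12 v) + \tfrac12(v-u) = (x-y)+\tfrac12(v-u)$, so $\Vert\lambda-\mu\Vert_{\mathcal P}\leq \Vert x-y\Vert_{\mathcal P}+\tfrac12\Vert v-u\Vert_{\mathcal P} < 1+1 = 2$, hence $\lambda-\mu\in 2\mathring{\mathcal P}$. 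Thus $\lambda\in\mu+2\mathring{\mathcal P}$, and the point $\mu+\tfrac12(\lambda-\mu)$ (the midpoint of $\lambda$ and $\mu$) lies in both $\lambda+\mathring{\mathcal P}$ and $\mu+\mathring{\mathcal P}$, contradicting disjointness of the interiors. (One should double-check this midpoint argument carefully: $\mu+\tfrac12(\lambda-\mu)-\lambda=-\tfrac12(\lambda-\mu)\in\mathring{\mathcal P}$ since $\Vert\tfrac12(\lambda-\mu)\Vert_{\mathcal P}<1$, and symmetrically for $\mu$.) Either way $\Vert x-y\Vert_{\mathcal P}\neq 1$, so $A$ avoids $1$.

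\textit{$A$ has density $2^{-n}$.} Scaling the tiling by $\tfrac12$ shows that $\{\tfrac12\lambda+\tfrac12\mathcal P\}_{\lambda\in\Lambda}$ tiles $\R^n$, hence $\bigcup_{\lambda\in\Lambda}(\tfrac12\lambda+\tfrac12\mathring{\mathcal P})$ has density $1$ (its complement has measure zero, being contained in the countable union of the measure-zero boundaries $\tfrac12\lambda+\tfrac12\partial\mathcal P$). The set $A$ is obtained from $\bigcup_{\lambda}(\tfrac12\lambda+\tfrac12\mathring{\mathcal P})$ by deleting, for each tile, all translates except those indexed by $\Lambda$ (i.e. by the sublattice-like scaling $2\Lambda\subset\Lambda$ — more simply, $A\subset\bigcup_{\lambda\in\Lambda}(\tfrac12\lambda+\tfrac12\mathring{\mathcal P})$ picks out exactly the translates at the points $\lambda\in\Lambda$ out of the finer pattern). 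Concretely, on each large cube $[-R,R]^n$ the number of cells $\lambda+\mathcal P$ with $\lambda\in\Lambda$ that meet it is, up to a boundary term of lower order, $\Vol([-R,R]^n)/\Vol(\mathcal P)$, and each contributes volume $\Vol(\tfrac12\mathcal P)=2^{-n}\Vol(\mathcal P)$ to $A$. Therefore
$$\delta(A)=\limsup_{R\to\infty}\frac{\Vol(A\cap[-R,R]^n)}{\Vol([-R,R]^n)}=2^{-n}.$$
Since $A$ is measurable and avoids $1$, we conclude $m_1(\R^n,\Vert\cdot\Vert_{\mathcal P})\geq\delta(A)=2^{-n}$.
\end{proof}

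The only step requiring genuine care is verifying that $A$ avoids distance $1$ across two different tiles; the argument above reduces it to the disjointness of the interiors of the tiles via a midpoint/triangle-inequality computation, which I expect to be the main (though still elementary) obstacle to get exactly right. The density computation is routine once one observes that the boundaries of the scaled tiles have Lebesgue measure zero and the tiling is locally finite.
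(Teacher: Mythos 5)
Your proof is correct and follows exactly the construction the paper uses (the set $A=\bigcup_{\lambda\in\Lambda}(\lambda+\tfrac12\mathring{\mathcal P})$), for which the paper itself gives no detailed verification. Your midpoint/triangle-inequality argument for avoidance across distinct tiles and the boundary-term density count are valid fillings-in of the details the paper leaves implicit.
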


\subsection{Parallelohedra and the Vorono\"\i\ 's Conjecture}\label{prelimVoro}

A $n$-dimensional \textit{parallelohedron} is a polytope $\mathcal{P}$ that tiles \textit{face-to-face} $\R^n$ by translation, i.e there is a tiling such that the intersection between two translates of $\mathcal{P}$, if non empty, is a common face of both of them. Works by Minkowski \cite{Minkowski1897}, Venkov \cite{MR0094790}, and McMullen \cite{MR582003} have led to a proof that the convex bodies tiling space by translation are exactly the parallelohedra, and moreover they tile $\R^n$ by a lattice.

Let us recall that a \textit{lattice} $\Lambda\subset \R^n$ is a discrete subgroup of the form $\bigoplus_{i=1}^n \Z e_i$ where $\{e_1,\ldots,e_n \}$ is a basis of $\R^n$ (for a general reference on lattices, see e.g \cite{Conway:1987:SLG:39091}). The \textit{Vorono\"\i\  region} of $\Lambda$ is defined by
$$\mathcal{V}=\mathcal{V}_\Lambda=\{z\in \R^n, \forall \text{ } x\in  \Lambda, \langle z-x, z-x \rangle \geq \langle z, z \rangle \},$$
where  $\langle \cdot  , \cdot \rangle$ denotes the usual scalar product on $\R^n$.
The Vorono\"\i\  region of a lattice is a parallelohedron. Vorono\"\i\  conjectured that the converse is also true, up to an affine transformation:

\begin{conj}[Vorono\"\i\ 's Conjecture] 
If $\mathcal{P}$ is a parallelohedron in $\R^n$, then there is an affine map $\varphi:\R^n\to\R^n$ such that $\varphi(\mathcal{P})$ is the Vorono\"\i\  region of a lattice $\Lambda\subset \R^n$.
\end{conj}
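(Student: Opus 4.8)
The plan is to follow Voronoï's original line of attack, which recasts the problem as the existence of a suitable positive weighting of the facets of the tiling. Fix a parallelohedron $\mathcal{P}$ and, by the Minkowski--Venkov--McMullen theorem recalled above, a face-to-face tiling $\mathcal{T}=\{\lambda+\mathcal{P}:\lambda\in\Lambda\}$ of $\R^n$ by a lattice $\Lambda$. Every facet ($(n-1)$-face) $F$ of a tile $\lambda+\mathcal{P}$ is shared with a unique neighbouring tile $\mu+\mathcal{P}$, and we call $v_F=\mu-\lambda\in\Lambda$ the \emph{facet vector} of $F$; it depends on $F$ only up to translation, and the opposite facet $-F$ of the same tile has $v_{-F}=-v_F$. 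The combinatorial backbone is the belt lemma of Minkowski and Venkov: the facets of $\mathcal{T}$ incident to a fixed $(n-2)$-face organise into a cyclic \emph{belt} of either $4$ or $6$ facets, whose facet vectors occur in $2$ or $3$ antipodal pairs and satisfy one linear dependence with small integer coefficients; for a $6$-belt the three directions involved can be labelled $p,q,r$ with $r=p+q$. The fact that the $2$-skeleton of $\mathcal{T}$ is simply connected will be crucial.

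The reduction step is \emph{Voronoï's criterion}: $\mathcal{P}$ is affinely a Voronoï region if and only if $\mathcal{T}$ admits a \emph{canonical scaling}, that is, a translation-invariant function $F\mapsto\sigma_F\in\R_{>0}$ with $\sigma_{-F}=\sigma_F$ such that around every belt the facet vectors, reweighted by the $\sigma_F$, still satisfy the linear relation imposed by the Voronoï model (for a $6$-belt this is a single scalar equation on the three weights involved, while $4$-belts impose nothing). The ``easy'' half of this criterion is what I would verify first: given a canonical scaling, one builds a positive definite quadratic form $Q$ on $\R^n$ out of the data $(\Lambda,\sigma)$, designed so that in the Euclidean structure defined by $Q$ the facet $F$ of the tile at the origin lies on the perpendicular bisector of the segment $[0,v_F]$; the belt relations are exactly what is needed for $Q$ to be well defined and positive definite, and then one checks directly that, with respect to $Q$, every tile $\lambda+\mathcal{P}$ coincides with the Dirichlet--Voronoï cell of $\lambda$. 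Choosing a linear map $\varphi$ with $\varphi^{\top}\varphi=Q$, one finds that $\varphi$ sends $\mathcal{T}$ to the Voronoï tiling of $\varphi(\Lambda)$ for the standard inner product, so that $\varphi(\mathcal{P})$ is the Voronoï region of the lattice $\varphi(\Lambda)$. This half is essentially bookkeeping once the dictionary between facets, facet vectors and belts is in place.

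The serious half is to produce a canonical scaling for an \emph{arbitrary} parallelohedron, and this is where the whole difficulty concentrates. The natural strategy is to treat the $\sigma_F$ as unknowns of a homogeneous linear system, one equation per belt, indexed by the ``belt graph'' of $\mathcal{T}$, and to prove that this system always has a strictly positive solution. Concretely: (i) use $\Lambda$-periodicity to reduce to finitely many unknowns and equations; (ii) fix $\sigma$ on one facet class and propagate it through the belts, showing that the propagation is well defined, i.e.\ path-independent, by checking that every cycle in the belt graph is generated by the relations coming from the vertices of $\mathcal{T}$ --- here is where simple connectivity of the $2$-skeleton and the structure of the vertex figures of $\mathcal{P}$ enter; (iii) prove that the solution so obtained is everywhere positive, not merely nonzero. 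Step (iii) is the genuinely delicate point, since consistency alone determines $\sigma$ only up to signs, and controlling the signs requires a fine analysis of how belts of length $4$ and $6$ interlock around the faces of $\mathcal{P}$.

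I expect step (iii), together with the path-independence in (ii), to be the main obstacle --- in fact it is an obstacle that remains unresolved in general, which is precisely why the statement is recorded here as a conjecture rather than a theorem: it is known for primitive parallelohedra (Voronoï), for parallelohedra all of whose belts have length $6$ (Zhitomirskii), in dimensions $n\le 4$ (Delone) and $n=5$, and for zonotopal and several further special classes, but a proof for all combinatorial types is not available. For the purposes of the present paper this gap is immaterial: $m_1(\R^n,\Vert\cdot\Vert)$ is an affine invariant of the norm, and the parallelohedra we actually work with --- rectangular cells, and the Voronoï regions of $A_n$ and of $D_n$ --- are Voronoï regions of lattices by construction, while in dimension $2$ (the case relevant to \tref{TheoPlan}) Voronoï's conjecture is classical and elementary, every centrally symmetric parallelogram or hexagon being affinely a Voronoï cell. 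Thus the conjecture is invoked only as motivation and is never needed to carry out the proofs of \tref{TheoPlan} or the results of Section~\ref{SectionFamilles}.
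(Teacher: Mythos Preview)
The statement is recorded in the paper as a \emph{conjecture}, and the paper provides no proof of it; it simply states the conjecture, cites the cases in which it has been established (primitive parallelohedra by Vorono\"\i, zonotopal parallelohedra by Erdahl, dimensions $\leq 4$ by Delone), and moves on. Your write-up correctly recognises this: you outline Vorono\"\i's canonical-scaling approach, isolate the genuine obstruction (positivity and consistency of the scaling), and explicitly acknowledge that the argument does not close in general --- which is exactly why the statement remains a conjecture. You also correctly observe that the paper never relies on the conjecture for its results, invoking it only as motivation for restricting attention to Vorono\"\i\ regions of lattices. There is therefore nothing to compare your proposal against, and nothing to correct: your assessment of the status of the statement matches the paper's.
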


This conjecture has been solved for several families of parallelohedra. For instance, Vorono\"\i\  himself \cite{MR1580754} proved it for \textit{primitive parallelohedra}, and Erdahl \cite{MR1703597} solved it for \textit{zonotopal parallelohedra}. Moreover, Delone \cite{zbMATH02567417} has shown that Vorono\"\i\ 's conjecture is true in dimensions up to $4$.

According to Vorono\"\i\ 's conjecture, we focus on polytopes that are Vorono\"\i\  regions of lattices.

\subsection{Discretization of the problem}\label{prelimGraph}

A set avoiding distance 1 in $\R^n$ is exactly an independent set in $G(\R^n,\Vert \cdot \Vert)$, \textit{i.e.} a subset $S$ of vertices such that, for all $x,y\in S$, $\Vert x-y \Vert\neq 1$. Therefore $m_1(\R^n,||.||)$ is the supremum of the densities achieved by independent sets. It is the analogue of the \textit{independence ratio} $\bar{\alpha}(G)=\frac{\alpha(G)}{|V|}$ of a finite graph $G$. 

Let $G=(V,E)$ be a discrete induced subgraph of $G(\R^n,\Vert \cdot \Vert)$. For $A\subset V$, we define the density of $A$ in $G$:

\begin{equation}
\delta_G(A)=\limsup_{R\to \infty} \frac{|A\cap V_R|}{|V_R|} \label{DeltaG} 
\end{equation}
 
where $V_R=V\cap [-R,R]^n$. Based on this notion, we extend the definition of the independence ratio to discrete graphs:
$$ \bar{\alpha}(G)=\sup_{A \text{ independent set}} \delta_G(A). $$ 

In this paper, we use the following equivalent formulation of $\bar{\alpha}(G)$:

\begin{lemm}\label{alpha1=alpha2}
Let $G=(V,E)$ be a discrete graph with $V\subset \R^n$. If every $v\in V$ has finite degree, then
$$ \bar{\alpha}(G)=\limsup_{R\to \infty} \bar{\alpha}(G_R),$$
where $G_R$ is the finite induced subgraph of $G$ whose set of vertices is $V_R=V\cap [-R,R]^n$.
\end{lemm}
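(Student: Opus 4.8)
The plan is to prove the two inequalities separately. For the easy direction, $\bar\alpha(G)\le\limsup_{R\to\infty}\bar\alpha(G_R)$, I would fix any independent set $A\subset V$ and observe that for every $R$ the set $A\cap V_R$ is an independent set in $G_R$, so $|A\cap V_R|\le\alpha(G_R)=\bar\alpha(G_R)|V_R|$; dividing by $|V_R|$ and taking $\limsup_{R\to\infty}$ gives $\delta_G(A)\le\limsup_{R\to\infty}\bar\alpha(G_R)$, and taking the supremum over $A$ yields the claim. Note this direction does not even use the finite-degree hypothesis.

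The reverse inequality $\bar\alpha(G)\ge\limsup_{R\to\infty}\bar\alpha(G_R)$ is where the finite-degree hypothesis is essential, and this is the main obstacle. The idea is to take, for each $R$, a maximum independent set $I_R$ of $G_R$ (so $|I_R|=\bar\alpha(G_R)|V_R|$) and to glue together the sets $I_R$ for a suitable increasing sequence $R_k\to\infty$ into a single independent set $A$ of $G$ whose density $\delta_G(A)$ recovers $\limsup_k\bar\alpha(G_{R_k})$. The naive union $\bigcup_k I_{R_k}$ fails because edges of $G$ can join vertices lying in different shells $[-R_k,R_k]^n\setminus[-R_{k-1},R_{k-1}]^n$; this is exactly where finite degree helps, since each vertex has only finitely many neighbours, so a vertex far inside a shell cannot "see" vertices of another shell. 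Concretely, I would choose the $R_k$ growing fast enough (and a buffer annulus of width depending on the degrees of vertices in the relevant region) so that, after deleting from each $I_{R_k}$ only the vertices lying in a thin annulus near the boundary of $[-R_k,R_k]^n$, the remaining pieces, placed in disjoint central regions, have no edges between them. One then checks that the deleted annuli have negligible relative volume, so the density loss tends to $0$; the surviving set $A$ is independent in $G$ and satisfies $\delta_G(A)\ge\limsup_k\bar\alpha(G_{R_k})-\varepsilon$ for every $\varepsilon>0$.

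Making the buffer-width bookkeeping precise is the technical heart: one needs, for a vertex $v$ at distance $\rho$ from the origin, a bound on how far its neighbours can reach, but finite degree alone does not give a uniform geometric radius, so one must argue locally — within a fixed box $[-R,R]^n$ only finitely many vertices occur (this uses that $V$ is discrete together with finite degree, or one works box by box) and hence there is a finite maximal edge length among edges incident to that box. Choosing each $R_{k}$ large compared to this finite quantity for the previous box decouples the shells. This explains why the proof is "rather technical" and relegated to Appendix \ref{AppA}. The remaining steps — that deleting a vanishing-density set changes $\delta_G$ by a vanishing amount, and that a $\limsup$ is attained along some subsequence — are routine.
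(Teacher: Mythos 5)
Your overall strategy for the hard direction --- take maximum independent sets $I_{R_k}$ of nested boxes along a fast-growing sequence, use the finiteness of $V_R$ and of the degrees to get a radius $b(R)$ with $N[V_R]\subset V_{b(R)}$, and glue the pieces with buffers --- is the same one the paper implements in Appendix~\ref{AppA} (and your easy direction, including the remark that it needs no degree hypothesis, is exactly the paper's). But the density bookkeeping, which is the actual content of the appendix, has a genuine gap as you describe it. You argue that the deleted buffer regions are harmless because they have ``negligible relative volume''. The density $\delta_G$ is a \emph{counting} density with respect to the discrete set $V$, not Lebesgue measure: a geometrically thin annulus near the boundary of $[-R_k,R_k]^n$ may contain almost all the vertices of $V_{R_k}$, so thinness gives no control on the fraction of vertices (hence of $I_{R_k}$) that you discard. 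The sequence $R_k$ must be chosen according to vertex cardinalities; this is precisely what the paper's choice of $\varphi$ (requiring $r_{\varphi(\ell+1)}\ge b(r_{\varphi(\ell)})$ \emph{and} $|V_{r_{\varphi(\ell+1)}}\setminus V_{r_{\varphi(\ell)}}|\ge |V_{r_{\varphi(\ell)}}\setminus V_{r_{\varphi(\ell-1)}}|$), together with Proposition~\ref{1i}, is for.

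A second, related gap is in what survives the gluing. The conflicts you must remove from $I_{R_k}$ are with pieces already placed inside $V_{R_{k-1}}$, hence they lie in $N[V_{R_{k-1}}]\subset V_{b(R_{k-1})}$, an \emph{inner} region, not a thin annulus near the outer boundary of $[-R_k,R_k]^n$. If you keep the whole of each $I_{R_k}$ minus an outer annulus, the pieces are nested rather than ``in disjoint central regions'' and the union need not be independent; if instead you restrict each $I_{R_k}$ to its own shell, independence can be arranged but nothing guarantees that $I_{R_k}$ has many vertices in that shell (a maximum independent set of $G_{R_k}$ may concentrate in the inner box), so the glued set may have density far below $\limsup_k\bar{\alpha}(G_{R_k})$. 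Closing this requires an extra counting argument: the paper uses the greedy set $S_k$ (vertices of $A_{R_{ik}}$ not in the neighbourhood of earlier $A_{R_{jk}}$) and the maximality inequality $|A_{R_{ik}}|\ge |A_{R_{(i+1)k}}\cap V_{R_{ik}}|$ to propagate $|S_k\cap V_{R_{ik}}|/|V_{R_{ik}}|\ge \bar{\alpha}(G_{R_{ik}})-\tfrac1k$ by induction (Lemma~\ref{induc}); alternatively one can keep all of $I_{R_k}\setminus V_{b(R_{k-1})}$ and choose $R_k$ along a subsequence realizing the $\limsup$ with $|V_{R_k}|\ge k\,|V_{b(R_{k-1})}|$, then evaluate the $\limsup$ defining $\delta_G$ along $R=R_k$. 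Without one of these ingredients your plan does not close.
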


\begin{proof}
 This lemma is proved in Appendix \ref{AppA} along with a discussion on the importance of the hypothesis that all the vertices of the graph have finite degree.
\end{proof}

Discrete subgraphs induced by $G(\R^n,\Vert \cdot \Vert)$  provide upper bounds of $m_1(\R^n,||.||)$ thanks to the following lemma:

\begin{lemm}\label{LemmeSousGraphe}
Let $G=(V,E)$ be a discrete subgraph induced by $G(\R^n,\Vert \cdot \Vert)$. Then 
\[m_1(\R^n,||.||)\leq \bar{\alpha}(G).\]
\end{lemm}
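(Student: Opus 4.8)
The plan is to reduce the statement to the classical finite case and then pass to the limit. For each $R>0$ let $G_R$ be the finite induced subgraph of $G$ on $V_R=V\cap[-R,R]^n$; being an induced subgraph of $G$, it is also induced by $G(\R^n,\|\cdot\|)$. The first step is to prove the Larman--Rogers inequality $m_1(\R^n,\|\cdot\|)\le\bar\alpha(G_R)$ for each $R$, and the second step is to let $R\to\infty$ using \lref{alpha1=alpha2}.

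For the first step I would use the standard translation-averaging argument. Fix a measurable set $S\subset\R^n$ avoiding $1$ and a real $T>R$. For every $x\in\R^n$ the translate $S-x$ still avoids $1$, and since two points of $V_R$ at distance exactly $1$ are joined by an edge of $G_R$, the set $(S-x)\cap V_R$ is independent in $G_R$; hence $\bigl|(S-x)\cap V_R\bigr|\le\alpha(G_R)$ for every $x$. Writing $\bigl|(S-x)\cap V_R\bigr|=\sum_{v\in V_R}\mathbf 1_S(v+x)$ and integrating over $x\in[-T,T]^n$, Fubini's theorem gives
$$\int_{[-T,T]^n}\bigl|(S-x)\cap V_R\bigr|\,dx=\sum_{v\in V_R}\Vol\bigl(S\cap(v+[-T,T]^n)\bigr)\ \ge\ |V_R|\cdot\Vol\bigl(S\cap[-(T-R),T-R]^n\bigr),$$
the inequality holding because $[-(T-R),T-R]^n\subseteq v+[-T,T]^n$ for every $v\in V_R$. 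Since the integrand is bounded by $\alpha(G_R)$, the same integral is at most $\alpha(G_R)(2T)^n$, so
$$|V_R|\cdot\frac{\Vol\bigl(S\cap[-(T-R),T-R]^n\bigr)}{(2T)^n}\ \le\ \alpha(G_R).$$
Letting $T\to\infty$, the left-hand side has upper limit $|V_R|\,\delta(S)$, because $\Vol\bigl(S\cap[-(T-R),T-R]^n\bigr)/\bigl(2(T-R)\bigr)^n$ has upper limit $\delta(S)$ by definition of the density while the extra factor $\bigl((T-R)/T\bigr)^n$ tends to $1$. Hence $\delta(S)\le\bar\alpha(G_R)$, and taking the supremum over all measurable $S$ avoiding $1$ yields $m_1(\R^n,\|\cdot\|)\le\bar\alpha(G_R)$. (This finite bound can alternatively simply be quoted from \cite{MR0319055}.)

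For the second step: since $m_1(\R^n,\|\cdot\|)\le\bar\alpha(G_R)$ for every $R$, we get $m_1(\R^n,\|\cdot\|)\le\limsup_{R\to\infty}\bar\alpha(G_R)$, and by \lref{alpha1=alpha2} the right-hand side equals $\bar\alpha(G)$ — note that the finite-degree hypothesis of that lemma holds automatically here, since the unit sphere of $\|\cdot\|$ is compact and $V$ is discrete. I expect the only genuine difficulty to be the one hidden in \lref{alpha1=alpha2}: the inequality $\bar\alpha(G)\le\limsup_R\bar\alpha(G_R)$ is immediate from the definitions (just restrict an independent set of $G$ to each $V_R$), but the reverse inequality — assembling near-optimal independent sets of the finite graphs $G_R$ into a single independent set of $G$ of the same density — calls for a compactness/diagonal argument and is precisely why that lemma is isolated and proved in Appendix~\ref{AppA}. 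Everything else is the routine averaging computation above.
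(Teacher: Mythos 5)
Your proposal is correct and takes essentially the same route as the paper: the paper likewise invokes \lref{alpha1=alpha2} to reduce to the finite case and then runs the same translation-averaging argument, only phrased probabilistically (a uniformly random translate $X\in[-R,R]^n$ and the expectation of $|(X+V)\cap S|$) instead of your explicit Fubini computation. The differences are purely cosmetic.
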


\begin{proof}
By \lref{alpha1=alpha2}, we may assume without loss of generality that $G$ is finite. In this case the result is well known: the proof below is for the sake of completeness.

Let $R>0$ be a real number, and let $X\in [-R,R]^n$ chosen uniformly at random. For $S\subset \R^n$, the probability that $X$ is in $S$ is $\P(X\in S)= \frac{\Vol(S\cap [-R,R]^n)}{\Vol ([-R,R]^n)}$. Notice that $\limsup_{R\to\infty} \P(X \in S) = \delta(S)$. 

Let $S\subset\R^n$ be a set avoiding $1$. We define the random variable $N=|(X+V)\cap S|$. On one hand, we have:
$$\begin{aligned} \E\left[\frac{N}{|V|}\right]&=\frac{1}{|V|}\E\left[\sum_{v\in V} \mathds{1}_{\{ X + v\in S \}} \right] \\& =\frac{1}{|V|} \sum_{v\in V} \P( X\in S-v). \end{aligned}$$
For every $v$, we have $\limsup_{R\to\infty} \P( X\in S-v)=\delta(S-v)=\delta(S)$. 

On the other hand, since for $v_1,v_2\in V$, ${\Vert  (X-v_1)-(X-v_2)\Vert  =\Vert  v_1-v_2\Vert  }$, and $(X+V)\cap S\subset S$, we have, for any $R>0$, 
$$ \frac{N}{|V|} \leq \bar{\alpha}(G). $$

Thus we get, 
$$\delta(S)\leq \bar{\alpha}(G).$$

\end{proof}

In order to give a first example, we consider the most natural lattice: the cubic lattice. The associated tiling and norm are respectively the cubic tiling and the well known sup norm 
$\Vert x\Vert  _\infty=\sup_{1\leq i\leq n} |x_i|$. More precisely, if $L=2\Z^n$, the Vorono\"\i\  region of $L$ is the cube whose vertices are the points of coordinates $(\pm 1,\pm 1,\ldots,\pm 1)$.

\begin{prop}
For every $n\geq1$, we have:
$$m_1(\R^n, \Vert   \cdot\Vert  _\infty)=\frac{1}{2^n}$$
\end{prop}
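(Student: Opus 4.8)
The plan is to establish the two inequalities separately. The lower bound $m_1(\R^n,\Vert\cdot\Vert_\infty)\geq \frac{1}{2^n}$ is immediate from \pref{borneinf}, since the cube $\mathcal{P}=[-1,1]^n$ tiles $\R^n$ by translation (by the lattice $2\Z^n$). So the whole content is the matching upper bound.

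For the upper bound, I would exhibit a finite induced subgraph $G$ of $G(\R^n,\Vert\cdot\Vert_\infty)$ with independence ratio exactly $2^{-n}$ and invoke \lref{LemmeSousGraphe}. The natural candidate is the set $V=\{0,1\}^n$ of the $2^n$ vertices of a unit cube, with edges joining $x,y$ whenever $\Vert x-y\Vert_\infty=1$. Two distinct points of $\{0,1\}^n$ always differ by a $0/1$ vector, so $\Vert x-y\Vert_\infty=1$ for \emph{every} pair of distinct vertices; hence $G$ is the complete graph $K_{2^n}$, which has $\alpha(G)=1$. Therefore $\bar\alpha(G)=\frac{\alpha(G)}{|V|}=\frac{1}{2^n}$, and \lref{LemmeSousGraphe} gives $m_1(\R^n,\Vert\cdot\Vert_\infty)\leq \frac{1}{2^n}$. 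Combining the two bounds yields equality.

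There is essentially no obstacle here: the key point is simply the observation that the $\ell^\infty$ distance between any two distinct vertices of the unit $0/1$-cube equals $1$, which makes the relevant distance graph complete. The case $n=1$ (where $\{0,1\}$ gives $K_2$) is covered by the same argument. This proposition is recorded mainly as a sanity check that the discretization machinery of \lref{LemmeSousGraphe} recovers the trivial case, and as a template for the more delicate constructions of auxiliary graphs used for $A_n$ and $D_n$, where the induced distance graph is far from complete and the local analysis of how independent pieces fit together becomes the real work.
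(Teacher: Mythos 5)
Your proof is correct and follows essentially the same route as the paper: the upper bound via the induced subgraph on $V=\{0,1\}^n$, which is complete for $\Vert\cdot\Vert_\infty$, combined with \lref{LemmeSousGraphe}, and the lower bound from \pref{borneinf}.
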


\begin{proof}
Let $V=\{0,1\}^n\subset \R^n$ 
  and let $G$ be the subgraph of $G(\R^n,\Vert \cdot \Vert)$ induced by $V$. Following the definition of $V$, for every $v,v'\in V$ with $v\neq v'$, we have $\Vert  v-v'\Vert  _\infty=1$. So $G$ is a complete graph, thus its independence number is $1$. Since it has $2^n$ vertices, applying \lref{LemmeSousGraphe}, we get
$$ m_1(\R^n, \Vert   \cdot\Vert  _\infty) \leq \frac{\alpha(G)}{|V|} =\frac{1}{2^n}.$$ 
\end{proof}

\section{Parallelohedron norms in the plane} \label{Plane}

In this section, we prove Theorem \ref{TheoPlan}. 
It is well known that the  parallelohedra in dimension $2$, are, up  to an affine 
transformation, the Vorono\"\i\  regions of a lattice, and that their combinatorial type is either
that of a square or of a hexagon (see Figure \ref{Voro2}).

\begin{figure}[!ht]
\includegraphics[scale=0.25]{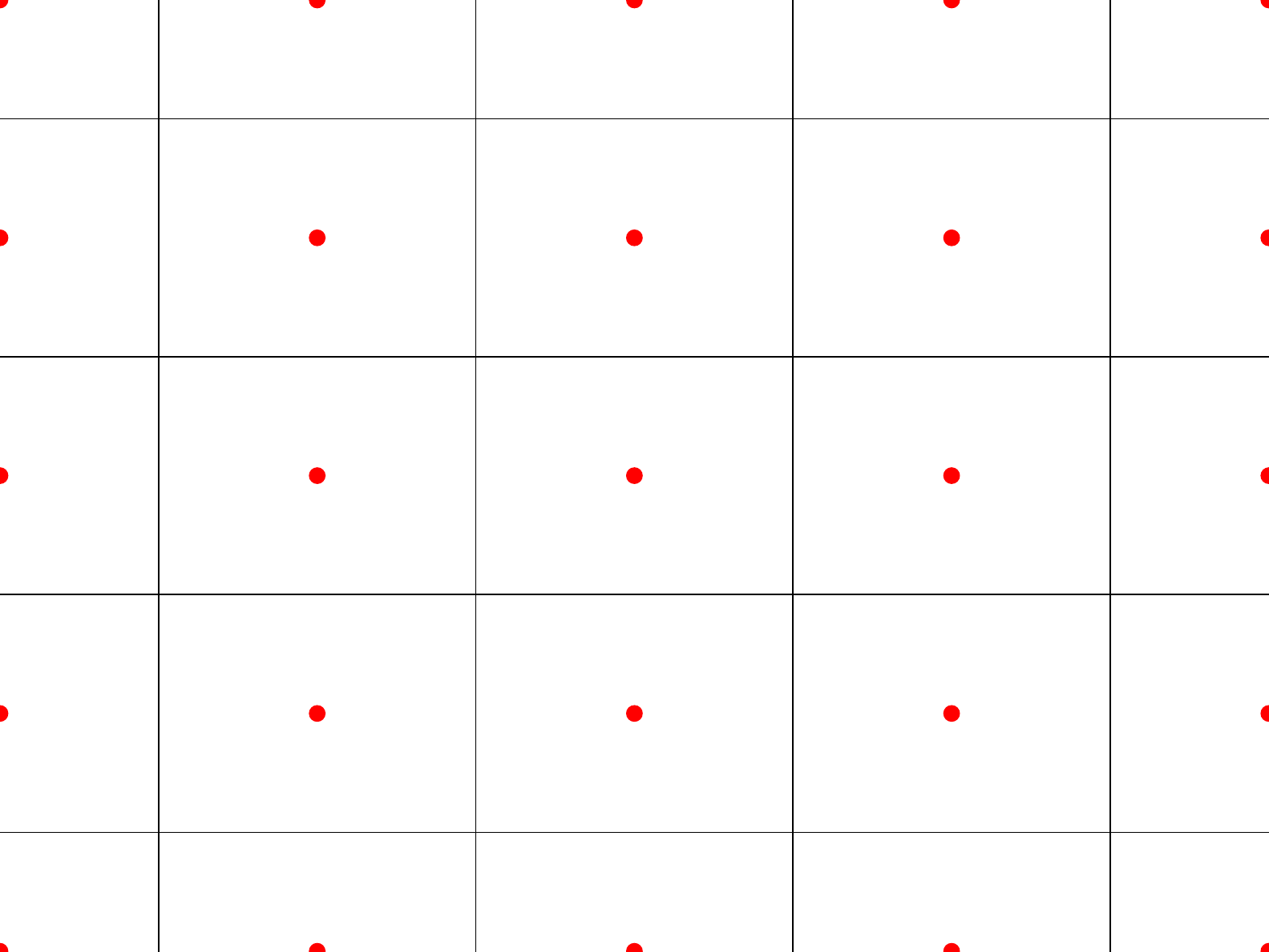}
\hspace{20pt}
\includegraphics[scale=0.25]{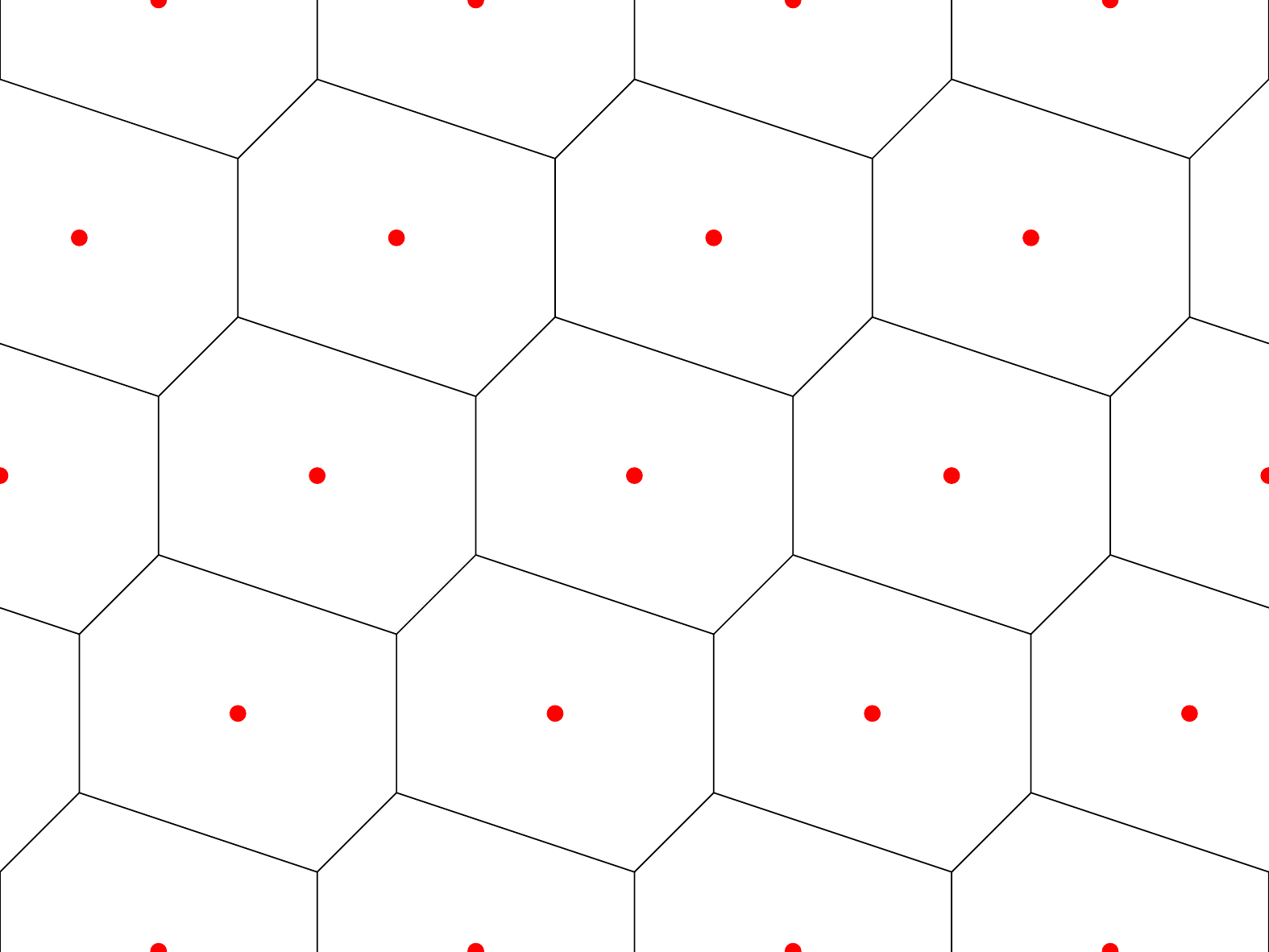}
\caption{The two kinds of Vorono\"\i\  regions of lattices in the plane.\label{Voro2}}
\end{figure}

We have already seen that $m_1(\R^2,\Vert \cdot \Vert_\infty )= \frac{1}{4}$, so 
it remains to deal with hexagons. Even though it is not true that every hexagonal Vorono\"\i\  region is linearly equivalent to the regular hexagon, we will first consider the regular hexagon in order to present in this basic case, the ideas that will be used in the general case.

\subsection{The regular hexagon}\label{HexaRegDemi}

The following result is due to Dmitry Shiryaev \cite{Dmitry}:

\begin{theo}\label{HexaRegTheo}
If $\mathcal{P}$ is the regular hexagon in the plane, then
$$m_1(\R^2,\Vert   \cdot\Vert  _\mathcal{P})=\frac{1}{4}.$$
\end{theo}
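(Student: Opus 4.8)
The plan is to exhibit a discrete induced subgraph $G=(V,E)$ of $G(\R^2,\Vert\cdot\Vert_{\mathcal P})$ whose independence ratio $\bar\alpha(G)$ equals $\tfrac14$, and then invoke \lref{LemmeSousGraphe} together with \pref{borneinf} to conclude. Following Shiryaev's idea, the vertex set $V$ should be a discrete set (most naturally a union of cosets of a scaled sublattice of the lattice $\Lambda$ whose Voronoi region is $\mathcal P$) carefully designed so that the graph $G$ has the key property: \emph{any two vertices at graph distance $2$ are at polytope distance exactly $1$.} For the regular hexagon, normalize so that $\mathcal P$ has circumradius $1$ and the six edge midpoints of $\mathcal P$, together with their antipodes, are the twelve points at polytope distance $1$ from the centre that I want to be neighbours of $0$; the natural candidate for $V$ is the lattice generated by these near-neighbour vectors (a triangular lattice) scaled appropriately, or a finite quotient thereof.

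The main steps I would carry out are: (1) fix an explicit triangular lattice $V$ and describe $E$ as the pairs $x,y\in V$ with $\Vert x-y\Vert_{\mathcal P}=1$; (2) compute the local structure of $G$ around a vertex --- identify the neighbourhood $N(0)$, show it forms a cycle (a hexagon) of some length, and verify the distance-$2$ property by checking that for any $u,v\in N(0)$ with $u\ne v$ we have $\Vert u-v\Vert_{\mathcal P}=1$ whenever $u,v$ are not themselves adjacent; (3) deduce that every independent set $S$ in $G$ has the property that the closed neighbourhoods of the vertices of $S$ are pairwise disjoint (since a common neighbour of two independent vertices would put them at graph distance $2$, hence polytope distance $1$, contradicting independence, unless they coincide); (4) conclude that $S$ decomposes into ``blocks'', each block being a clique of $G$ sitting inside its closed neighbourhood, so that $\delta_G(S)$ is bounded above by the supremum over cliques $K$ of the local density $|K|/|N[K]|$; (5) carry out the finite combinatorial check that this supremum is $\tfrac14$ --- i.e. that the best a clique can do is occupy one quarter of its closed neighbourhood, which here amounts to an isolated vertex having closed neighbourhood of size $4$ being impossible but a clique of size (say) $m$ inside a closed neighbourhood of size $4m$ being the extremal configuration, matched by the explicit $\tfrac12\mathcal P$ tiling construction of \pref{borneinf}; (6) combine with \lref{LemmeSousGraphe} for the upper bound and \pref{borneinf} for the lower bound.

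The crux --- and the step I expect to be the main obstacle --- is choosing $V$ so that the distance-$2$ property holds \emph{exactly}: one needs the polytope metric to be ``flat'' enough along the relevant directions that graph-distance-$2$ pairs land precisely on the unit sphere $\partial\mathcal P$ of the norm, never strictly inside or outside. This is a rigid requirement and pins down the scaling and the shape of $V$ almost uniquely; verifying it reduces to a finite but delicate computation with the piecewise-linear norm $\Vert\cdot\Vert_{\mathcal P}$, using that for the regular hexagon the unit sphere consists of six segments and that sums of two near-neighbour vectors either coincide with a near-neighbour vector or lie on one of these segments. Once this geometric fact is nailed down, the remaining combinatorics (the block decomposition and the local density bound) is routine: it is a small finite optimization over cliques in a vertex-transitive graph, and the matching lower bound is already in hand from \pref{borneinf}.
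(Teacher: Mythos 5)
Your overall architecture is the right one (and is the paper's): discretize, find an auxiliary graph in which graph distance $2$ forces polytope distance $1$, decompose an avoiding set into blocks, bound the local density of each block by $\tfrac14$, and combine \lref{LemmeSousGraphe} with \pref{borneinf}. But the central construction as you specify it cannot work, and this is a genuine gap rather than a detail. In step (1) you take $E$ to be the pairs at polytope distance exactly $1$, and you want the neighbours of $0$ to be boundary points of $\mathcal{P}$ (your ``edge midpoints and antipodes''). For such a graph the key property ``graph distance $2\Rightarrow\Vert u-v\Vert_{\mathcal P}=1$'' is self-defeating: two vertices at graph distance $2$ are by definition non-adjacent, hence \emph{not} at polytope distance $1$, so the property could only hold if no pair were at distance $2$ at all; and distance-$2$ pairs certainly exist (e.g.\ $0$ and twice a neighbour vector, at polytope distance $2$). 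The property can only hold for an auxiliary graph whose edges have polytope length strictly less than $1$. In the paper one takes $V=\tfrac12\Span_\Z(S)$ where $S$ is the set of \emph{vertices} of the hexagon, and the auxiliary Cayley graph $\tilde G$ has generating set $\tfrac12 S$, i.e.\ edges of polytope length $\tfrac12$; independence (avoiding distance $1$) is measured in the induced unit-distance graph $G_{\mathcal P}$, while cliques, neighbourhoods and graph distance are measured in $\tilde G$. Keeping these two graphs distinct is exactly what your write-up collapses.

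This confusion propagates: your step (3) claims that the closed neighbourhoods of the individual \emph{vertices} of $S$ are pairwise disjoint, which is false in the correct setting (an avoiding set may contain a whole clique of $\tilde G$, e.g.\ $\{0,\tfrac{v_0}2,\tfrac{v_1}2\}$, points pairwise at polytope distance $\tfrac12$, whose neighbourhoods obviously overlap) and contradicts your own step (4); the correct statement, as in \lref{LemmeVoisinagesDisjoints}, is that the closed neighbourhoods of distinct connected components of $S$ in $\tilde G$ are disjoint, and Property D forces each component to be a clique. Finally, your step (5) (``a clique of size $m$ in a closed neighbourhood of size $4m$'') is not a computation in any specified graph; in the paper's $\tilde G$ the finite check gives local densities $\tfrac17$, $\tfrac15$, $\tfrac14$ for the singleton, the edge, and the extremal triangle $\{0,\tfrac{v_0}2,\tfrac{v_1}2\}$ with $|N[C]|=12$. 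So: fix the vertex set to $\tfrac12\Span_\Z(S)$, take the auxiliary edges to be the half-vertex differences, prove Property D there (as in \lref{DP1DG2}: $\tfrac{v_i+v_j}2$ is either $0$, some $\tfrac{v_k}2$, or a point of $\partial\mathcal P$), and then your steps (3)--(6), restated component-wise, go through.
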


Let $\mathcal{P}$ be the regular hexagon in $\R^2$. We denote by $S$ its set of vertices and by $\partial\mathcal{P}$ its boundary. Thus, $\Vert  x\Vert  _\mathcal{P}=1$ if and only if $x\in \partial\mathcal{P}$.
We label the vertices of $\mathcal{P}$ modulo 6 as described in \fref{HexaRegGraph}.

The set $\frac{1}{2}S$ spans a lattice $V$. Let us consider $G_\mathcal{P}$, the subgraph of $G(\R^2, \Vert \cdot \Vert_\mathcal{P})$ induced by $V$. We shall prove that $\bar{\alpha}(G_\mathcal{P})\leq 1/4$. To do so, we introduce an auxiliary graph $\tilde{G}=(\tilde{V},\tilde{E})$, which is the Cayley graph with the same set of vertices $\tilde{V}=V$ corresponding to the generating set $\frac{1}{2}S$. In other words, for $x,y\in V$, $(x,y)\in \tilde{E}$  if and only if $x-y\in \frac{1}{2}S$. This graph is drawn in \fref{HexaRegGraph}.

We denote by $\tilde{d}(x,y)$ the distance between two vertices $x$ and $y$ in the graph $\tilde{G}$, \textit{i.e.} the minimal length of a path in $\tilde{G}$ between $x$ and $y$. We define the distance $\tilde{d}(A,B)$ in $\tilde{G}$ between two subsets of vertices $A$ and $B$ as the minimal distance between a vertex of $A$ and a vertex of $B$. The following lemma will be crucial for the proof of \tref{HexaRegTheo}:

\begin{figure}[!ht]
\includegraphics[scale=0.5]{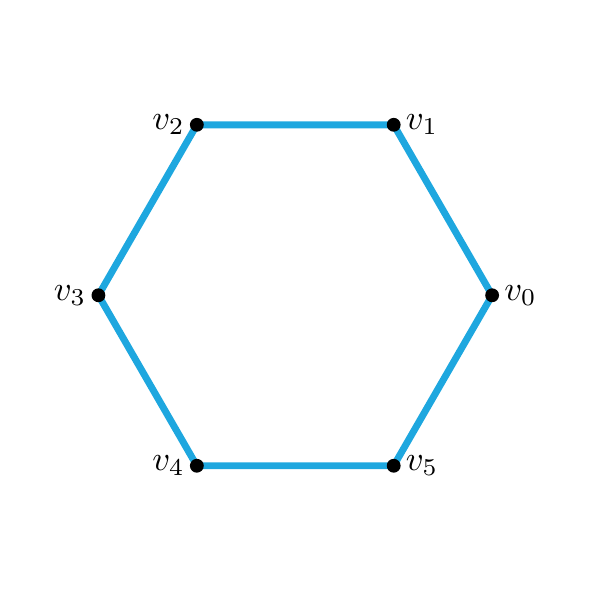}
\hspace{1cm}
\includegraphics[scale=0.5]{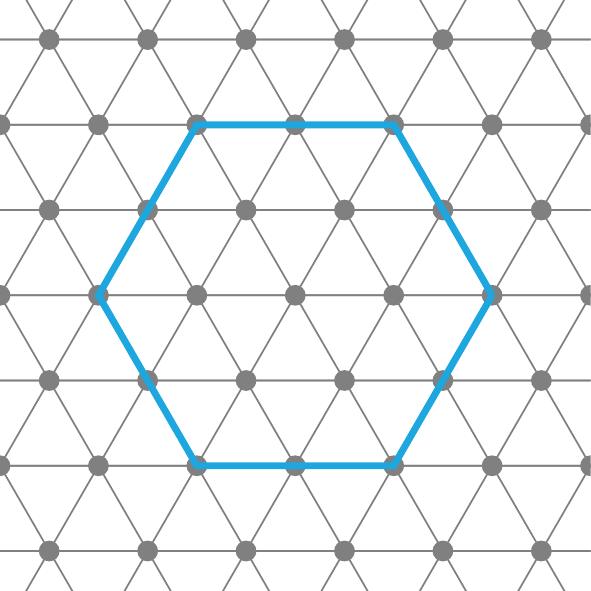}
\caption{The regular hexagon and the Cayley graph $\tilde{G}$. \label{HexaRegGraph}}
\end{figure}

\begin{lemm}\label{DP1DG2}
Let $u_1$ and $u_2$ be two vertices of $\tilde{G}$. Then: 
\begin{equation}\label{Property D}
\tilde{d}(u_1,u_2)=2 \Rightarrow \Vert  u_1-u_2\Vert  _\mathcal{P}=1. \tag{Property D} 
\end{equation}
\end{lemm}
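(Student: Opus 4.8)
The plan is to work directly with the coordinates of the lattice $V=\Span_\Z(\frac12 S)$ generated by the half-vertices of the regular hexagon. First I would fix a concrete description: label the vertices of $\mathcal P$ as $s_0,\dots,s_5$ (indices mod $6$) so that $s_{i+3}=-s_i$ and consecutive vertices differ by $60^\circ$, and set $V=\{\,\frac12(a s_0 + b s_1) : a,b\in\Z\,\}$ (using that $\frac12 s_0,\frac12 s_1$ form a $\Z$-basis, since the $s_i$ satisfy $s_{i+1}=s_i+s_{i-1}$-type relations characteristic of the hexagonal lattice). The generating set for $\tilde G$ is then $\frac12 S=\{\pm\frac12 s_0,\pm\frac12 s_1,\pm\frac12 s_2\}$ where $s_2=s_1-s_0$.

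Next I would enumerate, up to the symmetry of the situation, the vertices $u_1-u_2$ that can arise at $\tilde d$-distance exactly $2$: these are exactly the nonzero vectors of the form $\frac12 s_i \pm \frac12 s_j$ with $i,j\in\{0,1,2\}$ (allowing $i=j$), that are \emph{not} themselves in $\frac12 S$ (otherwise the distance would be $1$). So the task reduces to checking that for each such difference vector $w$, one has $\Vert w\Vert_{\mathcal P}=1$, i.e.\ $w\in\partial\mathcal P$. Concretely the candidates are, up to sign and the dihedral symmetry of the hexagon: $\frac12 s_i+\frac12 s_{i+1}$ (the midpoint-type combination), $s_i=\frac12 s_i+\frac12 s_i$ (a full vertex — clearly on the boundary), and $\frac12 s_i - \frac12 s_{i+1} = \frac12 s_{i-1}$ or similar (which collapses back into $\frac12 S$, hence excluded as it gives distance $1$). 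For the regular hexagon one checks $\frac12 s_i + \frac12 s_{i+1}$ is the midpoint of the edge joining $s_i$ and $s_{i+1}$, hence lies on $\partial\mathcal P$ and has $\mathcal P$-norm $1$; and $s_i$ trivially has norm $1$. That exhausts the cases, proving \eqref{Property D}.

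The main obstacle — though it is really bookkeeping rather than a deep difficulty — is making the case analysis genuinely exhaustive: one must be careful that "$\tilde d(u_1,u_2)=2$" is handled by listing \emph{all} sums of two generators and then correctly discarding those whose value is $0$ or already a single generator (distance $0$ or $1$), rather than silently assuming the two steps of the path use distinct generators. Using the relation $s_2 = s_1 - s_0$ keeps the number of distinct difference vectors small, and the $6$-fold rotational plus reflective symmetry of the regular hexagon collapses the list to essentially the two cases above, so the verification that each surviving difference vector lands on an edge of $\mathcal P$ is a short direct computation. I would present it as: "By symmetry it suffices to treat $w\in\{s_0,\ \frac12 s_0+\frac12 s_1\}$," verify each lies on $\partial\mathcal P$, and conclude.
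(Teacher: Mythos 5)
Your proposal is correct and follows essentially the same route as the paper: reduce to the difference vector via the Cayley-graph structure, enumerate all sums of two half-vertices, discard those equal to $0$ or to a single half-vertex (distance $0$ or $1$), and observe that the survivors are, up to symmetry, a full vertex $s_i$ or an edge midpoint $\tfrac12(s_i+s_{i+1})$, both on $\partial\mathcal P$; your case analysis just makes explicit what the paper dismisses as ``not hard to check.'' Only a cosmetic slip: the hexagonal relation is $s_i=s_{i-1}+s_{i+1}$ (equivalently $s_2=s_1-s_0$, which you use correctly), not $s_{i+1}=s_i+s_{i-1}$.
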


\begin{proof}
Since $\tilde{G}$ is vertex-transitive, we may assume without loss of generality that $u_1=0$. The vertices $u$ at graph distance $2$ from $0$ must be of the form $\frac{v_i}{2}+\frac{v_j}{2}$. It is not hard to check that if $u_2=\frac{v_i+v_j}{2}$ is neither $0$ nor another $\frac{v_k}{2}$ (in which case $\tilde{d}(0,u_2)<2$), then it is a point of $\partial \mathcal{P}$ (see also \fref{HexaRegGraph}). 
\end{proof}

\begin{rema}
It can be noted, although it will not be useful here, that the equivalence $\tilde{d}(u_1,u_2)=2 \Leftrightarrow \Vert  u_1-u_2\Vert  _\mathcal{P}=1$ holds.
\end{rema}

For a set $A\subset \tilde{V}$, we define its closed neighborhood $$N[A]=\{ v\in \tilde{V} \text{ such that } \tilde{d}(v,A)\leq 1 \}=A+\left(\{0\}\cup \frac{1}{2}S\right).$$

Now we consider the \textit{cliques} of $\tilde{G}$, that is the sets $C\subset \tilde{V}$ such that for every $u\neq v\in C$, $\tilde{d}(u,v)=1$. We will use the following lemma several times: it shows that for any graph $\tilde{G}$ satisfying \eqref{Property D}, if $A\subset \tilde{V}$ avoids polytope distance 1, then $A$ is a union of cliques whose closed neighborhoods are disjoint:

\begin{lemm}\label{LemmeVoisinagesDisjoints} 
Let $\Vert  \cdot \Vert_\mathcal{P}$ be a polytope norm in $\R^n$, and $G_\mathcal{P}$ an induced subgraph of $G(\R^n,\Vert \cdot \Vert_\mathcal{P})$. Assume there exists an auxiliary graph $\tilde{G}$ with the same vertices $V$ as $G_\mathcal{P}$ satisfying \eqref{Property D}. Let $A\subset V$ avoiding polytope distance $1$. Then $A$ may be written as a union of cliques of $\tilde{G}$
$$ A=\bigcup_{C\in\mathcal{C}} C $$ 
such  that  if $C,C'\in \mathcal{C}$ with $C\neq C'$, then 
$$N[C]\cap N[C']=\emptyset.$$
\end{lemm}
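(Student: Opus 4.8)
The plan is to build the decomposition by a natural equivalence-relation argument on $A$, using Property~D to control which vertices of $A$ are "close". First I would define a relation on $A$: say $a \sim a'$ if $\tilde d(a,a') = 1$ (equivalently $a - a' \in \tfrac12 S$), together with $a \sim a$. The key claim is that the transitive closure of this relation has equivalence classes that are cliques of $\tilde G$. The heart of the matter is the following local statement: if $a, a', a'' \in A$ with $\tilde d(a,a') = 1$ and $\tilde d(a',a'') = 1$, then either $a = a''$ or $\tilde d(a,a'') = 1$. Indeed, since $A$ avoids polytope distance $1$ and $a, a'' \in A$, we cannot have $\Vert a - a''\Vert_{\mathcal P} = 1$; by the contrapositive of Property~D this forces $\tilde d(a, a'') \neq 2$. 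Since $a$ and $a''$ both lie at $\tilde G$-distance $1$ from $a'$, we have $\tilde d(a,a'') \leq 2$, hence $\tilde d(a,a'') \in \{0,1\}$, i.e. $a = a''$ or $a \sim a''$. This shows $\sim$ restricted to $A$ is already transitive, so it is an equivalence relation on $A$ (reflexivity and symmetry being clear), and each class $C$ satisfies: any two distinct elements are at $\tilde G$-distance exactly $1$, i.e. $C$ is a clique of $\tilde G$. Taking $\mathcal C$ to be the set of these equivalence classes gives $A = \bigcup_{C \in \mathcal C} C$.

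Next I would prove disjointness of the closed neighborhoods. Suppose $C, C' \in \mathcal C$ and $v \in N[C] \cap N[C']$. Then there exist $a \in C$ and $a' \in C'$ with $\tilde d(v,a) \leq 1$ and $\tilde d(v,a') \leq 1$, so $\tilde d(a,a') \leq 2$. Again, since $a, a' \in A$, Property~D rules out $\tilde d(a,a') = 2$, so $\tilde d(a,a') \in \{0,1\}$; in either case $a$ and $a'$ lie in the same equivalence class, whence $C = C'$. Therefore $C \neq C'$ implies $N[C] \cap N[C'] = \emptyset$, which is exactly the assertion of the lemma.

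The only subtlety I expect is purely formal: one must make sure the union $A = \bigcup_{C} C$ is an equality of sets (every element of $A$ lies in its own class, which is nonempty and a subset of $A$), and that the argument does not secretly use anything beyond Property~D — in particular we never need the converse implication $\Vert u_1 - u_2\Vert_{\mathcal P} = 1 \Rightarrow \tilde d(u_1,u_2) = 2$, which is why the lemma is stated for general auxiliary graphs satisfying \eqref{Property D} and will later be reusable in the non-regular hexagon and the $A_n$, $D_n$ settings. No calculation is required; the whole proof is the triangle-inequality-plus-contrapositive observation above.
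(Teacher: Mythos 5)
Your proof is correct and essentially the same as the paper's: your equivalence classes under the transitive closure of $\tilde G$-adjacency within $A$ are exactly the connected components of the subgraph of $\tilde G$ induced on $A$ that the paper uses, and both the clique property and the disjointness of closed neighborhoods are obtained by the same contrapositive use of \eqref{Property D} to exclude pairs of $A$ at $\tilde G$-distance $2$. (The parenthetical ``$a-a'\in\tfrac12 S$'' is specific to the regular-hexagon Cayley graph and should simply be dropped in the general setting, as you yourself note.)
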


\begin{proof}
Let us consider the decomposition of $A$ in connected components with respect to $\tilde{G}$. Following \lref{DP1DG2}, since $A$ avoids polytope distance 1, a connected component $C$ cannot contain two vertices at graph distance 2 from each other. So $C$ must be a clique. 

Assume that two different cliques $C$ and $C'$ of $A$ share a common neighbor. Thus $\tilde{d}(C,C')\leq 2$. Since $C$ and $C'$ are two disjoint connected components, $\tilde{d}(C,C')>1$. So $\tilde{d}(C,C')=2$, which is impossible, since $A$ avoids polytope distance 1. 
\end{proof}

Now we define the local density of a clique $C$ of $\tilde{G}$: $\delta^0(C)=\frac{|C|}{|N[C]|}$. In the next lemma, we analyse the different possible cliques of the graph $\tilde{G}$ that we constructed for the regular hexagon, and determine their local density:
\begin{lemm}\label{HexaRegDelta*}
For every clique $C\subset \tilde{G}$, 
$$ \delta^0(C)\leq \frac{1}{4}$$
\end{lemm}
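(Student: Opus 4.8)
The plan is to enumerate, up to the symmetries of the Cayley graph $\tilde G$, all cliques $C$ of $\tilde G$ and to compute (or lower-bound) $|N[C]|$ in each case, thereby verifying $\delta^0(C)=|C|/|N[C]|\le 1/4$ directly. The first step is to classify the cliques. Since $\tilde G$ is the Cayley graph of the lattice $V=\tfrac12 S\cdot\Z$ with connection set $\tfrac12 S$, and since $\tilde G$ is $6$-regular (each vertex has the six neighbors $v\pm\tfrac12 v_i$), the possible clique sizes are small: a clique is a set of vertices pairwise joined by an edge, i.e. pairwise differing by an element of $\tfrac12 S$. Starting from $0$, a neighbor is some $\tfrac12 v_i$, and a second neighbor $\tfrac12 v_j$ is adjacent to $\tfrac12 v_i$ iff $\tfrac12 v_i-\tfrac12 v_j\in\tfrac12 S$, i.e. $v_i-v_j\in S$. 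In the regular hexagon, with the labelling $v_1,\dots,v_6$ around $\partial\mathcal P$ (so $v_{i+3}=-v_i$), one checks that $v_i-v_{i+1}$ is again a vertex of $\mathcal P$; this is exactly the statement that the translates of $\mathcal P$ tile face-to-face. Hence $\{0,\tfrac12 v_i,\tfrac12 v_{i+1}\}$ is a triangle, and one verifies that no four vertices can be mutually adjacent (a fourth vertex $\tfrac12 v_k$ would have to be adjacent to all of $0,\tfrac12 v_i,\tfrac12 v_{i+1}$, forcing $v_k,\,v_k-v_i,\,v_k-v_{i+1}$ all in $S$, which has no solution). So up to the action of the automorphism group of $\tilde G$ there are exactly three types of clique: a single vertex ($|C|=1$), an edge ($|C|=2$), and a triangle ($|C|=3$).

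The second step is to bound $|N[C]|$ from below for each type. For a single vertex, $N[C]=\{0\}\cup\tfrac12 S$ has $7$ elements, so $\delta^0(C)=1/7\le 1/4$. For an edge $C=\{0,\tfrac12 v_i\}$, we have $N[C]=C+(\{0\}\cup\tfrac12 S)$; one computes this explicitly in $V$ and finds $|N[C]|=10$ (the two $7$-element neighborhoods overlap in exactly $4$ points: the two clique vertices themselves and the two common neighbors), giving $\delta^0(C)=2/10=1/5\le 1/4$. For a triangle $C=\{0,\tfrac12 v_i,\tfrac12 v_{i+1}\}$, the union of the three closed neighborhoods is, by a similar direct computation in the lattice $V$, a set of exactly $12$ points — geometrically this is the obvious cluster of four small hexagons $\tfrac12\mathcal P$ around the triangle — so $\delta^0(C)=3/12=1/4$. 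This last case is the one that attains the bound, which is consistent with the fact that the optimal density is exactly $1/4$; it corresponds to the tiling configuration of Proposition~\ref{borneinf}.

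The main obstacle is the triangle case: one must be careful that the three $7$-point neighborhoods $N[\{0\}]$, $N[\{\tfrac12 v_i\}]$, $N[\{\tfrac12 v_{i+1}\}]$ overlap in precisely the right way so that their union has $12$ and not fewer elements — an undercount here would wrongly push $\delta^0$ above $1/4$. Concretely one lists the $21$ points (with multiplicity) $\{0,\tfrac12 v_i,\tfrac12 v_{i+1}\}+(\{0\}\cup\tfrac12 S)$ and identifies the coincidences: each pair of the three clique vertices shares two common neighbors in $\tilde G$ (itself being at distance $2$ from nothing, and the vertices of $S$ being arranged so that consecutive half-vertices have exactly two neighbors in common), and the three clique vertices are each counted among the neighbors of the other two. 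Tallying these identifications gives $21-9=12$. All of this is a finite check in the explicit lattice $V$ using the labelling of Figure~\ref{HexaRegGraph}, and once the three clique types are in hand it is routine; the conceptual content is entirely in the classification of cliques, which rests on the face-to-face tiling property $v_i-v_{i+1}\in S$ of the regular hexagon.
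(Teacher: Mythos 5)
Your proof is correct and follows essentially the same route as the paper: classify the cliques containing $0$ up to symmetry into the three types $\{0\}$, $\{0,\tfrac12 v_i\}$, $\{0,\tfrac12 v_i,\tfrac12 v_{i+1}\}$ and compute the local densities $1/7$, $1/5$, $1/4$. The only difference is that you spell out the nonexistence of $4$-cliques and the inclusion--exclusion count $|N[C]|=12$ explicitly, where the paper reads these off from the figures.
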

\begin{proof}

Let $C$ be a clique of $\tilde{G}$. Since $\tilde{G}$ is vertex transitive, we can assume without loss of generality that $0\in C$. Up to the action of the dihedral group $\mathcal{D}_3$ on $V$, there are only three possible cliques in $\tilde{G}$ containing $0$, and one can easily determine their neighborhoods (see \fref{HexaRegCliques}):

\begin{itemize}
\item $C=\{0\}$: its neighborhood is $\{0\}\cup \frac{1}{2} S$. Thus $\delta^0(C)=\frac{1}{7}$.

\item $C=\left\{0,\frac{v_0}{2}\right\}$, and $\delta^0(C)=\frac{2}{10}=\frac{1}{5}$.

\item $C=\left\{0,\frac{v_0}{2},\frac{v_1}{2}\right\}$,
and $\delta^0(C)=\frac{3}{12}=\frac{1}{4}$.
\end{itemize}

\begin{figure}[!ht]
\includegraphics[scale=0.5]{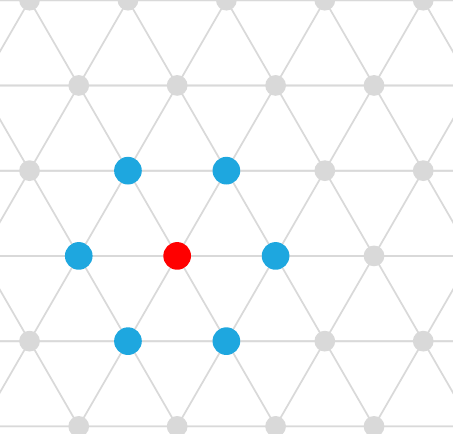}\hspace{.6cm}
\includegraphics[scale=0.5]{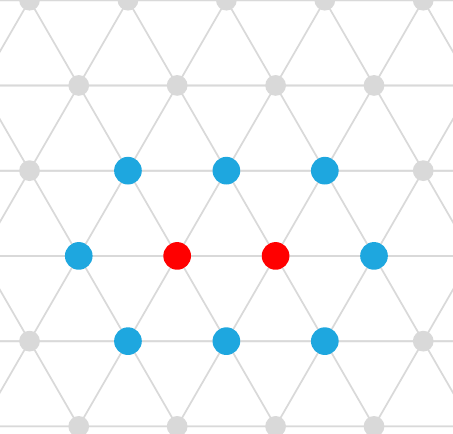}\hspace{.6cm}
\includegraphics[scale=0.5]{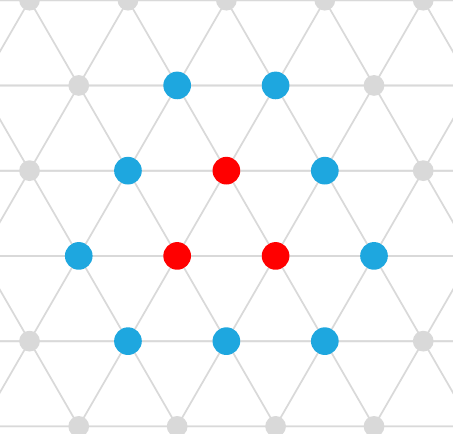}
\caption{The possible cliques and their neighborhood.\label{HexaRegCliques}}
\end{figure}

\end{proof}

We have all the ingredients to prove that the density of a set avoiding $1$ for the regular hexagon can not exceed $1/4$:

\begin{proof}[Proof of \tref{HexaRegTheo}] 
Following \lref{LemmeSousGraphe}, it is sufficient to prove $\bar{\alpha}(G_\mathcal{P})\leq\frac{1}{4}$.
If $A\subset V$ is a set avoiding 1, it may be written as the union of cliques in $\tilde{G}$, whose neighborhoods are disjoint (\lref{LemmeVoisinagesDisjoints}). So the density of $A$ is upper bounded by the maximum local density of a clique in $\tilde{G}$. So, from \lref{HexaRegDelta*}, $\bar{\alpha}(G_\mathcal{P})\leq\frac{1}{4}$. 
\end{proof}

\subsection{General hexagonal Vorono\"\i\  regions}\label{SoussecHexaIrreg}

In this subsection, we deal with a general hexagonal Vorono\"\i\  region $\mathcal P$ of the plane, and prove:

\begin{theo}\label{HexaIrregTheo}
If $\mathcal{P}$ is an hexagonal Vorono\"\i\  region in the plane, then
$$m_1(\R^2,\Vert   \cdot\Vert  _\mathcal{P})=\frac{1}{4}.$$
\end{theo}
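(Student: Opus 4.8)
The plan is to mimic the proof of \tref{HexaRegTheo}, but with an auxiliary graph that is adapted to the irregular hexagon. As in the regular case, write $S$ for the vertex set of $\mathcal{P}$, labelled $v_0,\dots,v_5$ modulo $6$ (with $v_{i+3}=-v_i$), let $V$ be the lattice spanned by $\tfrac12 S$, let $G_\mathcal{P}$ be the subgraph of $G(\R^2,\Vert\cdot\Vert_\mathcal{P})$ induced by $V$, and let $\tilde G$ be the Cayley graph on $V$ with connection set $\tfrac12 S$. By \lref{LemmeSousGraphe} and \lref{alpha1=alpha2} it suffices to show $\bar\alpha(G_\mathcal{P})\le\tfrac14$. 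The first step is to check what survives of \eqref{Property D}. For a general hexagon the sums $\tfrac12 v_i+\tfrac12 v_j$ are no longer all on $\partial\mathcal P$: typically the three "long diagonal" sums $\tfrac12 v_i+\tfrac12 v_{i+1}$ land on the boundary (these are exactly the vertices of $\mathcal P$ itself, since a hexagonal Vorono\"\i\ region satisfies $v_i+v_{i+1}=v_{i+2}$ up to relabelling — more precisely $v_{i-1}+v_{i+1}=v_i$), but the "short" sums $\tfrac12 v_i+\tfrac12 v_{i+2}$ need not have polytope norm $1$. So \eqref{Property D} must be replaced by a weaker statement: I would prove a lemma saying that if $\tilde d(u_1,u_2)=2$ then \emph{either} $\Vert u_1-u_2\Vert_\mathcal{P}=1$ \emph{or} $u_1-u_2$ is one of a short list of exceptional vectors (the $\pm\tfrac12(v_i+v_{i+2})$ type, equivalently $\pm\tfrac12 v_{i+1}$ translated — one should identify exactly which of these are genuinely new vertices of $V$ and which coincide with elements of $\tfrac12 S$, as this depends on the shape of the hexagon).

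With this weaker property in hand, \lref{LemmeVoisinagesDisjoints} no longer applies verbatim: a set $A$ avoiding polytope distance $1$ need not decompose into cliques with disjoint closed neighborhoods, because two points of $A$ at $\tilde d=2$ joined by an exceptional vector are allowed. The second step is therefore to understand the structure of the connected components of $A$ in $\tilde G$ in this relaxed situation. I expect each component to be either a clique of $\tilde G$ (as before) or one of a bounded list of slightly larger configurations obtained by gluing cliques along exceptional edges; and I would need the analogue of the neighborhood-disjointness conclusion, namely that distinct components cannot be joined by a \emph{non-exceptional} path of length $2$, so their "non-exceptional closed neighborhoods" are disjoint. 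Then I would redefine the local density accordingly — $\delta^0(C)=|C|/|N[C]|$ but with $N[C]$ interpreted so that the local neighborhoods of distinct components still tile disjointly — and the density of $A$ is again bounded by the supremum of $\delta^0$ over the admissible configurations $C$.

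The third step is the finite case analysis: enumerate, up to the symmetry group of the hexagon (which for a generic hexagonal Vorono\"\i\ region is only $\{\pm 1\}$, though special ones have more), all admissible configurations $C$ containing $0$, compute $|C|$ and $|N[C]|$ for each, and verify $\delta^0(C)\le\tfrac14$ in every case, with equality achieved only by the "triangular" clique $\{0,\tfrac12 v_i,\tfrac12 v_{i+1}\}$ as in \lref{HexaRegDelta*}. Combining this with Step 2 gives $\bar\alpha(G_\mathcal{P})\le\tfrac14$, and \pref{borneinf} gives the matching lower bound, proving $m_1(\R^2,\Vert\cdot\Vert_\mathcal{P})=\tfrac14$.

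The main obstacle I anticipate is Step 2, namely controlling the exceptional edges. Unlike the regular hexagon, where the clean equivalence $\tilde d=2\Leftrightarrow\Vert\cdot\Vert_\mathcal{P}=1$ forces the rigid clique decomposition, here one must rule out the possibility that exceptional edges let independent sets "leak" between clusters and thereby exceed density $\tfrac14$ — and one must do so uniformly over all hexagonal shapes, including degenerate or highly symmetric ones where extra coincidences among the vectors $\tfrac12 v_i$ and $\tfrac12(v_i+v_j)$ occur. I would handle this by choosing the auxiliary graph $\tilde G$ more cleverly than the naive Cayley graph on $\tfrac12 S$: perhaps enlarge the connection set to include the exceptional vectors themselves (so that those problematic pairs become adjacent, restoring a Property-D-like statement at the cost of a bigger neighborhood), or instead pass to a finite quotient and do the optimization there. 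Getting the right auxiliary graph so that the local density bound is exactly $\tfrac14$ and not something larger is the crux, and it is presumably where the paper's remark about needing "a different graph with a slightly weaker property" is pointing.
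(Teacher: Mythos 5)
Your plan founders before the case analysis, at the very first step of the construction. For a general hexagonal Vorono\"\i\ region the vertices do \emph{not} span a lattice: the subgroup of $\R^2$ generated by $\tfrac12 v_0,\tfrac12 v_1,\tfrac12 v_2$ is typically dense (the paper points this out explicitly when it abandons the regular-hexagon construction), so ``the lattice $V$ spanned by $\tfrac12 S$'' does not exist, the Cayley graph $\tilde G$ is not a discrete induced subgraph, and neither \lref{alpha1=alpha2} nor \lref{LemmeSousGraphe} (both of which need $V\cap[-R,R]^n$ finite) can be invoked. The identity $v_{i-1}+v_{i+1}=v_i$ that you use to sort the distance-$2$ sums is precisely the condition that the hexagon is an affine image of the regular one, and it fails for a generic hexagonal Vorono\"\i\ cell; what is true is only $v_i+v_{i+1}=\beta_i\in L$ and $v_{i+2}\equiv v_i \bmod L$. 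Thus $\tfrac12(v_i+v_{i+1})$ is an edge midpoint (not a vertex of $\mathcal P$), while $\tfrac12(v_i+v_{i+2})$ has polytope norm close to $\tfrac12$, deep in the interior, so \eqref{Property D} fails badly and not just along a tame ``short list of exceptional vectors.'' Finally, the step you yourself flag as the crux --- controlling how independent sets leak along the exceptional distance-$2$ pairs --- is left unresolved, and the fixes you float (enlarging the connection set, passing to a finite quotient) do not address either the non-discreteness or the new bad sums they would create. So the proposal is an outline with a genuine gap at exactly the point where the theorem lives.

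For comparison, the paper's proof replaces both of your missing ingredients. It takes $V=\tfrac12 L\cup(V_{\mathcal P}+\tfrac12 L)$, a union of three cosets of $\tfrac12 L$ (hence genuinely discrete), with $B=V_{\mathcal P}+\tfrac12 L$ twice as dense as $A=\tfrac12 L$; the auxiliary graph is not a Cayley graph but is built by hand from the six points $s_i=\tfrac{v_{i-1}+v_{i+1}}2$ interior to $\mathcal P$, with edges $(a,a+s_i)$ and $(a+s_i,a+s_{i+1})$ for $a\in A$. Property D is then proved only in the weakened form ``distance $2$ with a common neighbor in $B$ forces polytope distance $1$'' (\lref{DP1DG2HexaIrreg}); cliques are replaced by connected components of the $1$-avoiding set, whose $B$-neighborhoods are shown to be disjoint, and the local density is measured against $B$-neighbors only, giving $\delta^0_B(C)\le\tfrac38$ after a short enumeration. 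The final bound is not obtained clique-by-clique at level $\tfrac14$ as in your Step 3, but by weighting: $\delta_{G_{\mathcal P}}(U)=\tfrac23\,\delta_B(U)\le\tfrac23\cdot\tfrac38=\tfrac14$. Your overall strategy (discretize, auxiliary graph, local densities) is the right one, but the specific discrete set, the correct weak form of Property D, and the $B$-weighted counting are the content of the theorem, and they are absent from the proposal.
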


Let $\mathcal{P}$ be the hexagonal Vorono\"\i\  region of a lattice $L\subset \R^2$. 
Let $\{\beta_0,\beta_1\}$ be a basis of $L$ such that the vectors $\beta_0$ , $\beta_1$, $\beta_2=\beta_1 - \beta_0$, and their opposites define the faces of $\mathcal{P}$.
We label the vertices $v_i$, for $0\leq i\leq 5$, of $\mathcal{P}$ in such a way that $\beta_i=v_i + v_{i+1}$, where $i$ is defined modulo $6$. This situation is depicted in \fref{SituationHexaIrreg}.

\begin{figure}[!ht]
\includegraphics[scale=0.7]{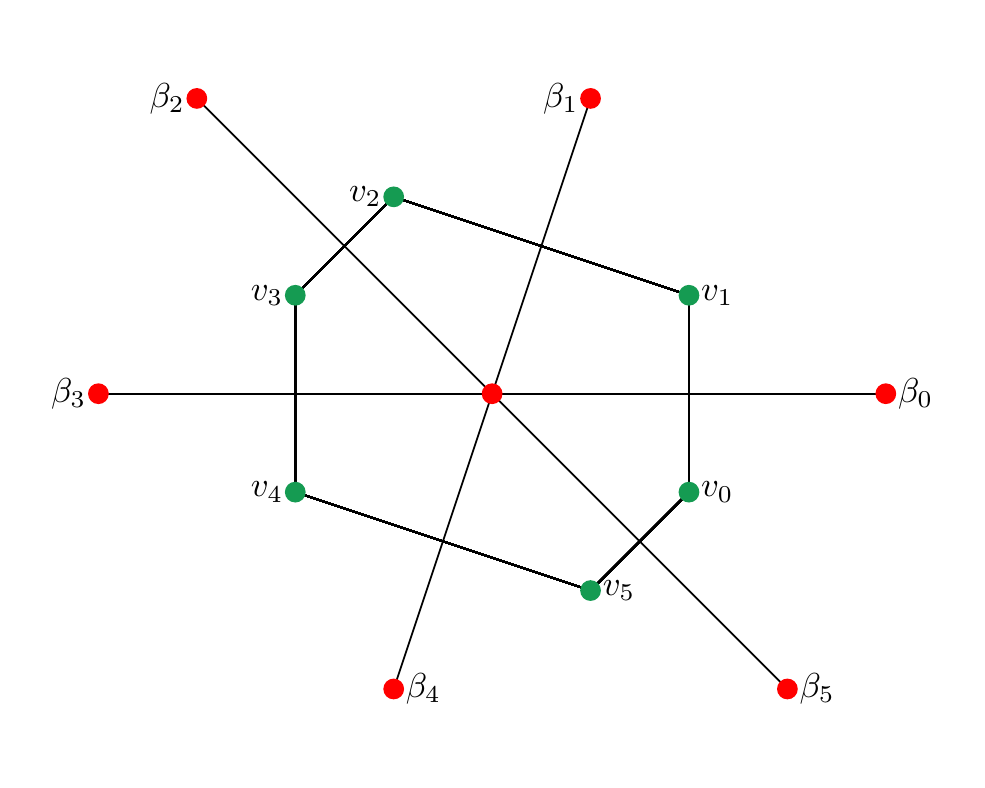}
\caption{The vectors $\beta_i$ and the vertices of the hexagon.\label{SituationHexaIrreg}}
\end{figure}

In order to prove \tref{HexaIrregTheo}, just like in the case of the regular hexagon, we shall construct a graph $G_\mathcal{P}$ induced by $G(\R^2, \Vert\cdot\Vert_\mathcal{P})$, and prove that $\bar{\alpha}(G_\mathcal{P})\leq 1/4$. 
Unfortunately, in  general, the vertices of $\mathcal{P}$ do not span a lattice. We will use a different point of view in order to build $G_\mathcal{P}$, together with an auxiliary graph $\tilde{G}$ that will satisfy a weaker version of \eqref{Property D}.

For the set $V$ of vertices of $G_{\mathcal P}$, we take the lattice  $\frac{1}{2}L$, 
together with the  translates of  the vertices $V_{\mathcal P}$ of $\mathcal{P}$ by  $\frac{1}{2}L$. 
We set $A=\frac{1}{2}L$ and $B=V_{\mathcal P}+\frac{1}{2}L$ so that $V=A\cup B$;  this construction is represented in \fref{HexaIrregSommets}
where the vertices of $A$ are depicted in red, and those of $B$ in green.

\begin{figure}[!ht]
\includegraphics[scale=0.3]{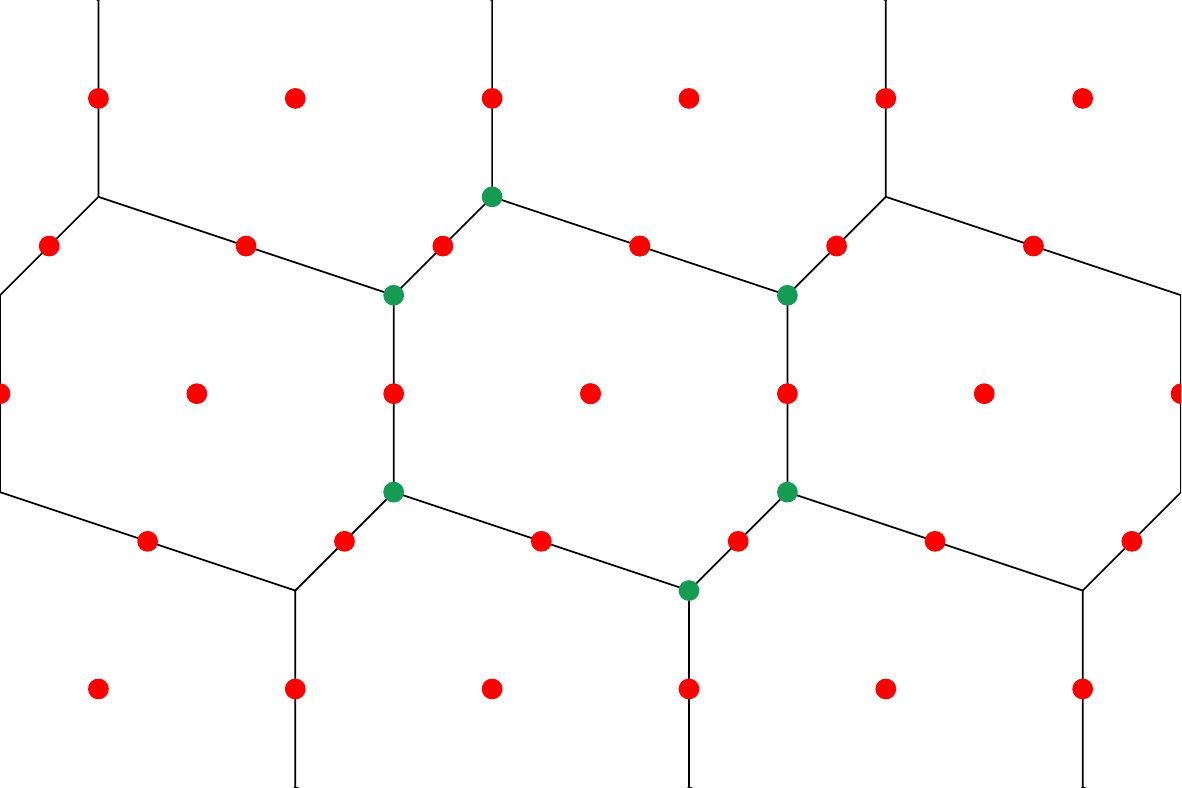} 
\hspace{.3cm}
\includegraphics[scale=0.3]{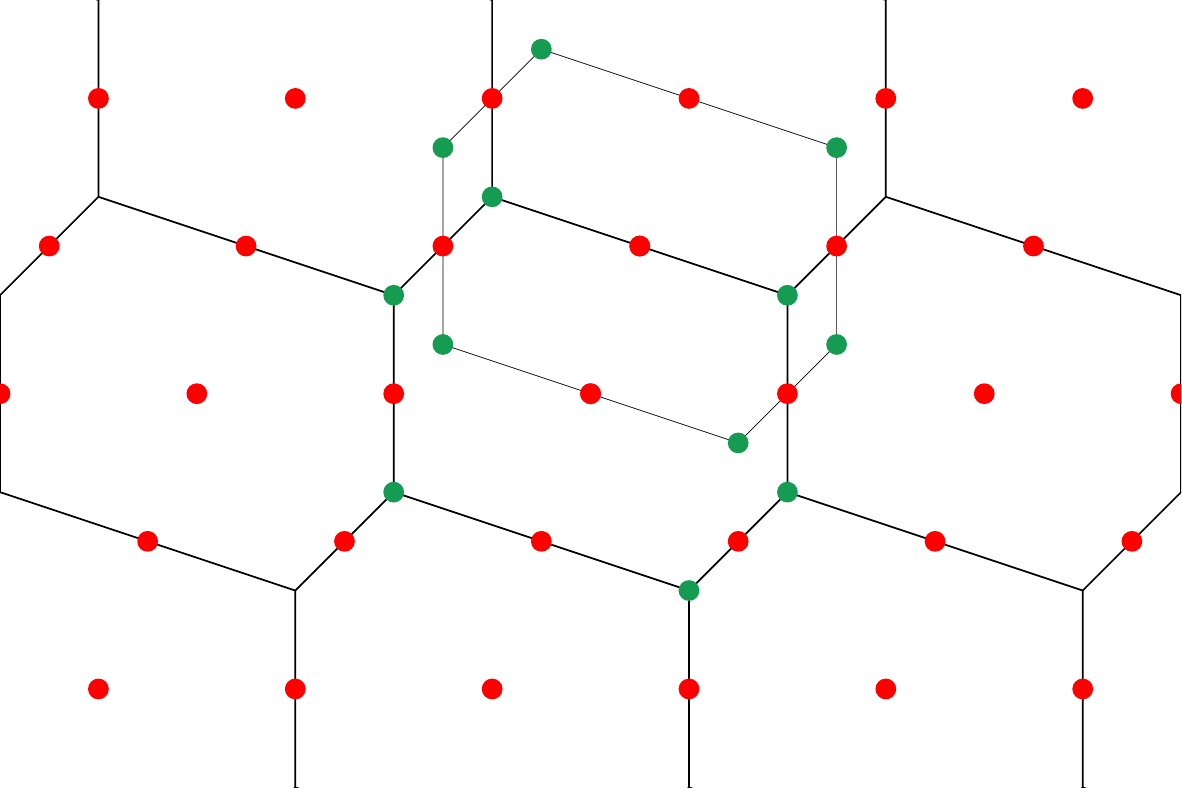} 
\hspace{.3cm}
\includegraphics[scale=0.3]{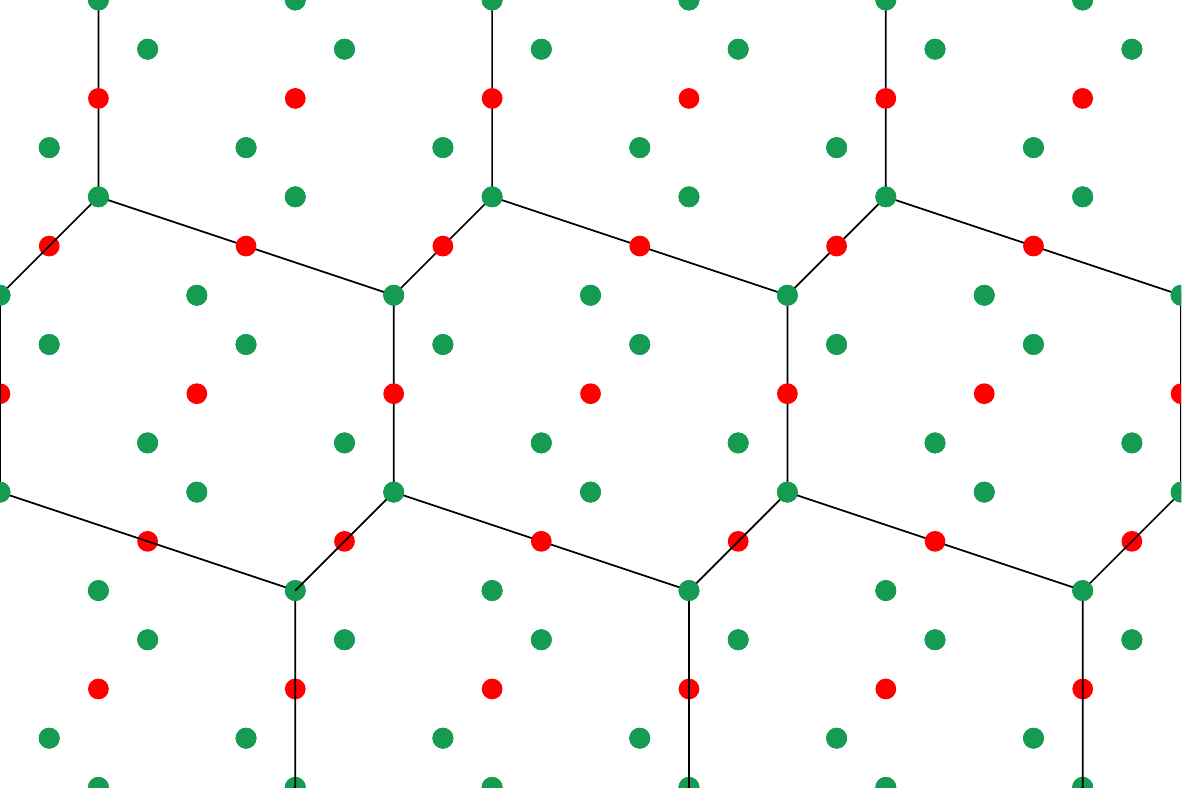}
\caption{Constructing the set of vertices of $G_\mathcal{P}$}\label{HexaIrregSommets}
\end{figure}

Let us note that for every $i$, $v_{i+2}= v_i \mod L $. Indeed, $$v_{i+2}-v_i=v_{i+2}+v_{i+1}-(v_i+v_{i+1})=\beta_{i+1} - \beta_i = \beta_{i+2}.$$ 

As a consequence, we may write $V$ as the disjoint union of three sets:
$$V= \frac{1}{2}L \cup (\frac{1}{2}L + v_0) \cup (\frac{1}{2}L + v_1),$$
and this implies that the density of $B$ in $V$ is twice that of $A$.

Now, let us construct the auxiliary graph $\tilde{G}=(\tilde{V},\tilde{E})$. It has the same vertices as $G_{\mathcal P}$, i.e. $\tilde{V}=V$. Let us describe the edges of $\tilde{G}$. By construction, there are exactly $7$ vertices of $V$ in the interior of $\mathcal{P}$: the center $0\in A$, and six points of $B$ denoted $s_0,\ldots,s_5$, with
$$s_i=\frac{v_{i-1}+v_{i+1}}{2}.$$
For every point of $a\in A$, we define the edges $(a,a+s_i)$ and $(a+s_i, a+s_{i+1})$ for $i$ from $0$ to $5$. This is illustrated in \fref{HexaIrregAretes}.

\begin{figure}[!ht]
\includegraphics[scale=0.7]{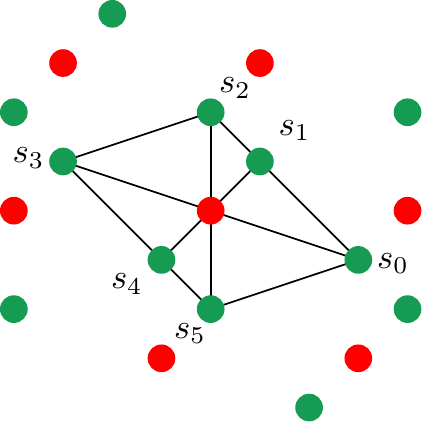} \hspace{1.5cm}
\includegraphics[scale=0.45]{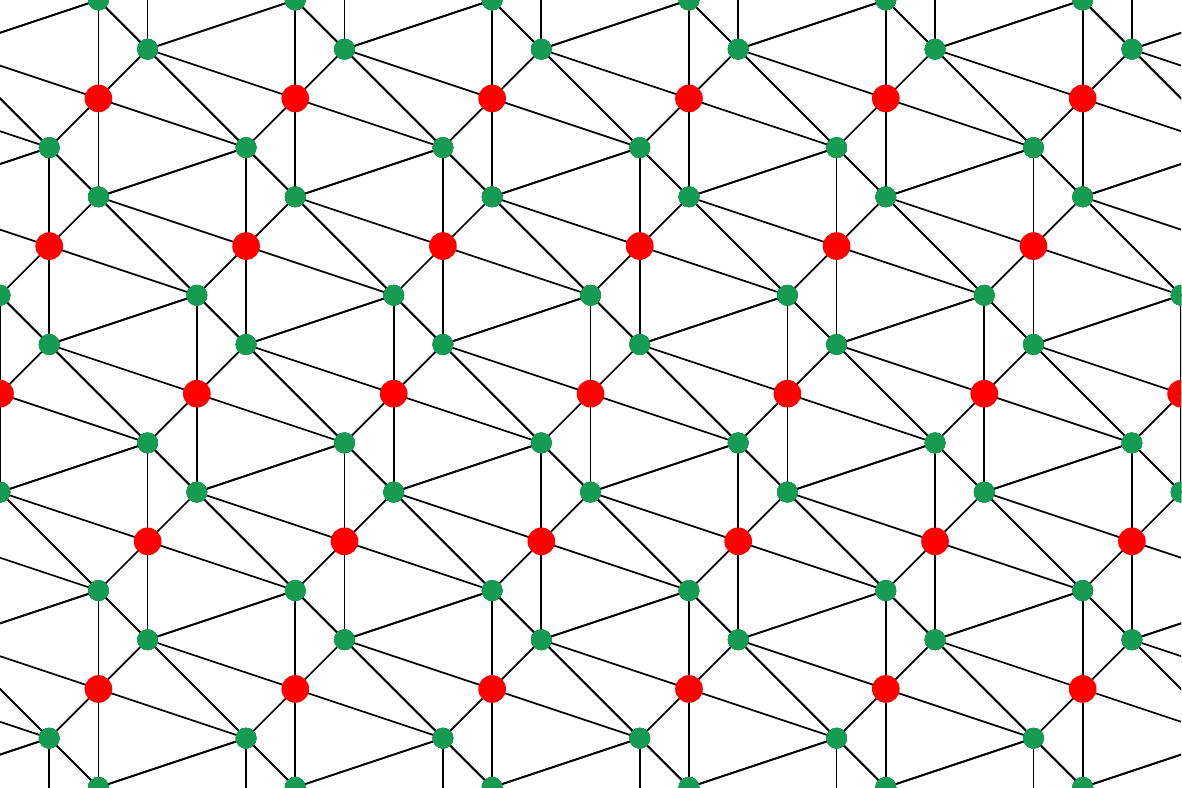} 
\caption{Constructing  the edges of $\tilde{G}$.\label{HexaIrregAretes}}
\end{figure}

\begin{rema}
In the case of the regular hexagon, this construction leads to the same graph $\tilde{G}$ that we considered in Subsection \ref{HexaRegDemi}.
\end{rema}

Let us describe the neighborhood (with respect to $\tilde{G}$) of each type of point. By construction, a point in $A$ has $6$ neighbors, and they all belong to $B$. 
A vertex $a+s_i$ of $B$ also has six neighbors. Three of them are elements of $A$, namely $a$, $a+\frac{\beta_i} 2$ and $a+\frac{\beta_{i-1}} 2$ and the other three are elements of $B$, namely, $a+s_{i-1}$, $a+s_{i+1}$ and $a+v_i$. \fref{aretesV} illustrates the neighborhoods of the vertices of $\tilde{G}$.

\begin{figure}[!ht]
\includegraphics[scale=0.9]{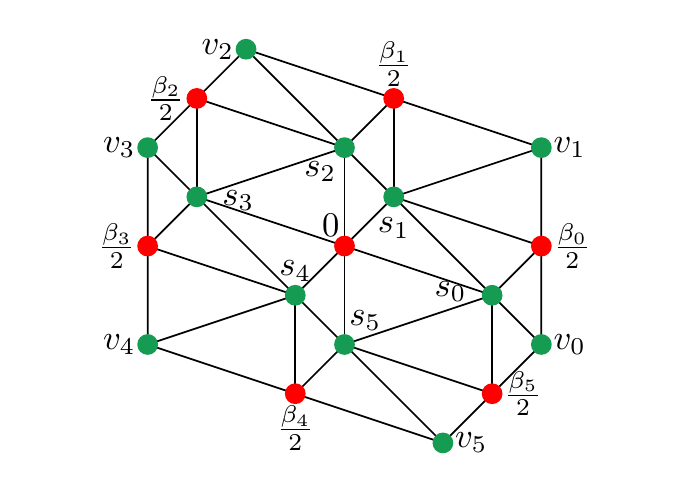} \hspace{10pt}
\caption{The basic pattern in $\tilde{G}$.	\label{aretesV}}
\end{figure}

It should be noted that \eqref{Property D} is not in general fulfilled by $\tilde{G}$: indeed, the vertices $s_0$ and $s_3$ are at graph distance $2$ in $\tilde{G}$  but not (in general) at polytope distance $1$. However, this property continues to hold \emph{for points that share a common neighbor in $B$}. We prove this in the next  lemma, which  will play the role of \lref{DP1DG2} for this new graph $\tilde{G}$:

\begin{lemm}\label{DP1DG2HexaIrreg}
If two vertices $x,y\in V$ are at distance 2 from each other in $\tilde{G}$ and have a common neighbor $z\in B$, then $\Vert  x-y\Vert  _\mathcal{P}=1$.
\end{lemm}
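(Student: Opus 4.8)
The plan is to carry out a direct case analysis, exploiting the explicit description of the neighborhoods in $\tilde G$ given just above. By vertex-transitivity of the translation action of $\frac12 L$ we may assume the common neighbor is $z = s_i$ for some $i$, or — after first reducing to a single basic pattern — even that $z = s_0$, say. The neighbors of $s_i$ in $B$ are $s_{i-1}$, $s_{i+1}$ and $v_i$; its neighbors in $A$ are $0$, $\frac{\beta_i}{2}$ and $\frac{\beta_{i-1}}{2}$. A pair $\{x,y\}$ of vertices both adjacent to $z=s_i$ with $\tilde d(x,y)=2$ is then an unordered pair of two \emph{non-adjacent} vertices from this list of six. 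The first step is simply to enumerate these pairs up to the symmetries available (the translation action, plus possibly the point reflection $x \mapsto -x$ and the dihedral symmetries that preserve the combinatorial structure of $\mathcal P$), reducing to a short finite list of representative pairs $\{x,y\}$.

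The second step is, for each representative pair, to compute $x-y$ explicitly as a combination of the $v_j$ (equivalently of $0$, $v_0$, $v_1$ and $\beta_0,\beta_1$, using $\beta_j = v_j + v_{j+1}$ and $v_{j+2} \equiv v_j \bmod L$), and to verify that $x-y \in \partial\mathcal P$, i.e.\ $\|x-y\|_{\mathcal P}=1$. Concretely: pairs with both $x,y\in A$ adjacent to $s_i$ have difference in $\{\pm(\frac{\beta_i}{2}-0), \pm(\frac{\beta_{i-1}}{2}-0), \pm(\frac{\beta_i-\beta_{i-1}}{2})\}$ — but note $0$ and $\frac{\beta_i}{2}$ are themselves adjacent in $\tilde G$ (they differ by $s_{\text{something}}$? — one must check which pairs among $\{0,\frac{\beta_i}{2},\frac{\beta_{i-1}}{2}\}$ are at distance $1$ vs.\ $2$), so only the genuinely distance-$2$ pairs survive; the relevant difference will be of the form $\frac{\beta_{i-1}-\beta_i}{2} = \pm\frac{\beta_{i+1}}{2}$ up to sign, and since $\beta_{i+1} = v_{i+1}+v_{i+2}$ with $v_{i+2}\equiv v_i$, one checks $\frac{v_i+v_{i+1}}{2}\in\partial\mathcal P$ — indeed this is one of the $s_j$'s or a vertex midpoint lying on an edge. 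Similarly, mixed pairs (one vertex in $A$, one in $B$, both adjacent to $s_i$ and at mutual distance $2$) and pairs with both in $B$ (the only candidate being $\{s_{i-1},s_{i+1}\}$, if indeed $\tilde d(s_{i-1},s_{i+1})=2$, or $\{s_{i\pm1}, v_i\}$) yield differences one identifies with boundary points of $\mathcal P$ by the same bookkeeping. The key geometric input is that the six points $s_0,\dots,s_5$ together with the vertices $v_0,\dots,v_5$ and the edge structure of $\mathcal P$ are constrained by $s_i = \frac{v_{i-1}+v_{i+1}}{2}$ and the relations $\beta_i = v_i+v_{i+1}$, so that all the relevant difference vectors land in a small explicit set of boundary points.

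The main obstacle I anticipate is purely organizational rather than deep: making the case enumeration genuinely exhaustive while keeping it short. One has to be careful about (a) which pairs among the six neighbors of $z$ are \emph{adjacent} in $\tilde G$ (and hence excluded, since we need $\tilde d(x,y)=2$, not $1$) versus at distance $2$ — this depends on reading off the edge set correctly from \fref{aretesV}; and (b) using enough symmetry to collapse the list without accidentally identifying inequivalent cases. Once the list is pinned down, each verification that $x-y\in\partial\mathcal P$ is a one-line computation with the $v_j$'s and the face vectors $\pm\beta_0,\pm\beta_1,\pm\beta_2$. I would present the proof as: (1) reduce via translation to $z=s_0$; (2) list the neighbors of $s_0$ and, from the adjacency description preceding the lemma, identify all pairs at mutual distance $2$; (3) for each such pair compute the difference and exhibit it as a point of $\partial\mathcal P$, concluding $\|x-y\|_{\mathcal P}=1$.
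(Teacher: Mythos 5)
Your plan is correct and is essentially the paper's own argument: a finite case check driven by the explicit neighborhood description of $\tilde{G}$, in which every admissible difference $x-y$ is identified as either a face midpoint $\frac{\beta_j}{2}$ or a vertex $v_j$ of $\mathcal{P}$, hence of norm $1$. The paper merely organizes the cases differently — it normalizes a vertex of $A$ to $0$ (or, when $x,y,z\in B$, the edge $\{x,z\}$ to $\{s_{i-1},s_i\}$) rather than fixing $z=s_i$, which avoids sorting out adjacencies among all six neighbors of $z$; note in passing that your open question about the pairs in $\{0,\frac{\beta_i}{2},\frac{\beta_{i-1}}{2}\}$ is settled at once since points of $A$ have all their neighbors in $B$, so all three such pairs are genuinely at distance $2$ (and their differences are face midpoints), while the reduction all the way to $z=s_0$ via dihedral symmetry should be dropped for an irregular hexagon — keeping the index $i$ symbolic suffices.
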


\begin{proof}
First suppose that at least one of the two vertices is in $A$. In this case we may assume $x=0$. Then $z$ is one of the $s_i$, and following the analysis of the neighbors of $s_i$, $y$ must be in the set $\{ 0, s_{i-1}, s_{i+1}, \frac{ \beta_{i} }{2}, \frac{ \beta_{i-1} }{2}, v_i\}$. The first three are obviously not at graph distance $2$ from $0$, so $y$ is one of the last three vertices, and they all are in $\partial \mathcal{P}$. Thus, $\Vert  x-y\Vert  _\mathcal{P}=1$.

Now suppose $x,y,z\in B$. Then we may assume without loss of generality $x=s_{i-1}$, and $z=s_{i}$. Since $z$ has only three neighbors in B, $y$ can be either $s_{i+1}$ or $v_i$. We have:
$$s_{i+1} -s_{i-1} =\frac{v_{i}+v_{i+2}}{2} - \frac{v_{i}+v_{i-2}}{2} = \frac{v_{i+2}-v_{i-2}}{2} = \frac{v_{i+2}+v_{i+1}}{2} = \frac{\beta_{i+1}}{2}  $$ 
and
$$v_{i} -s_{i-1} =v_{i} - \frac{v_{i}+v_{i-2}}{2} = \frac{v_{i}-v_{i-2}}{2} = \frac{v_{i}+v_{i+1}}{2} = \frac{\beta_{i}}{2}. $$
In both cases $\Vert  x-y\Vert  _\mathcal{P}=1$. 
\end{proof}

Let $U\subset V$ be a set of vertices avoiding polytope distance 1, let $C$ be a connected component of $U$ and let $N[C]$ be its closed neighborhood. We define:

$$ N_B[C] = N[C]\cap B $$
and 
$${\delta}^0_B(C)=\frac{|C|}{|N_B[C]|}.$$

The following lemma is the analogue of \lref{LemmeVoisinagesDisjoints} in this situation: we show that if $C$ and $C'$ are two different connected components, then $N_B[C]$ and $N_B[C']$ must be disjoint:
\begin{lemm}\label{HexaIrregVoisDisjoints}
Let $U\subset V$ be a set avoiding polytope distance $1$. If $C\neq C'$ are two connected  components of $U$, then
$$ N_B[C]\cap N_B[C'] =\emptyset. $$
\end{lemm}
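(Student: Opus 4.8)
The plan is to argue by contradiction, mimicking the structure of the proof of \lref{LemmeVoisinagesDisjoints}, but being careful that the new auxiliary graph $\tilde G$ only satisfies the weakened \eqref{Property D} provided by \lref{DP1DG2HexaIrreg}. So suppose $C\neq C'$ are connected components of $U$ with a common element $z$ in $N_B[C]\cap N_B[C']$. By definition of $N_B$, this $z$ lies in $B$, and there exist $x\in C$, $y\in C'$ with $\tilde d(x,z)\le 1$ and $\tilde d(y,z)\le 1$. Since $C$ and $C'$ are distinct connected components, $x\neq y$ and in fact $\tilde d(x,y)\geq 2$ (if $x$ and $y$ were adjacent they would lie in the same component; and of course $x\neq y$). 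On the other hand, going through $z$ gives a path of length at most $2$, so $\tilde d(x,y)\le 2$. Hence $\tilde d(x,y)=2$, and a shortest path realizing this distance passes through $z$; in particular $x$ and $y$ are at graph distance $2$ from each other and share the common neighbor $z\in B$.

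Now \lref{DP1DG2HexaIrreg} applies directly: it gives $\Vert x-y\Vert_\mathcal{P}=1$. But $x\in C\subset U$ and $y\in C'\subset U$, so $x,y\in U$, contradicting the hypothesis that $U$ avoids polytope distance $1$. Therefore no such common $z$ exists, i.e. $N_B[C]\cap N_B[C']=\emptyset$.

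One should double-check the degenerate situations: it is conceivable that $z$ itself belongs to $C$ or to $C'$ (then $x$ or $y$ could be taken equal to $z$), but this does not affect the argument — we still obtain two distinct vertices $x\in C$, $y\in C'$ with $\tilde d(x,y)=2$ and common $B$-neighbor $z$ (when, say, $z\in C$, take $x=z$; then $z$ is trivially a neighbor of itself only in the sense $\tilde d(z,z)=0\le 1$, so one must instead observe that $\tilde d(z,y)\le 1$ forces $z$ and $y$ adjacent, hence $\tilde d(C,C')\le 1$, again contradicting that they are distinct components). So in every case one reaches a contradiction, either immediately from the component structure or from \lref{DP1DG2HexaIrreg}.

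The only genuinely delicate point is the requirement in \lref{DP1DG2HexaIrreg} that the common neighbor lies in $B$ (not in $A$): this is exactly why the lemma is stated in terms of $N_B[C]$ rather than $N[C]$, and why we must extract the common neighbor $z$ from $N_B[C]\cap N_B[C']$ at the very start. Since $z\in B$ is built into the hypothesis we are contradicting, this causes no trouble. I expect the main subtlety — such as it is — to be purely bookkeeping: making sure the chosen $x$, $y$ are genuinely distinct and genuinely at graph distance exactly $2$ before invoking \lref{DP1DG2HexaIrreg}, rather than any substantive geometric estimate, which has already been absorbed into \lref{DP1DG2HexaIrreg}.
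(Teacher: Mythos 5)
Your proof is correct and follows essentially the same route as the paper: extract a common vertex $z\in B$ of the two $B$-neighborhoods, deduce $\tilde d(x,y)=2$ with common neighbor $z\in B$ from the component structure, and invoke \lref{DP1DG2HexaIrreg} to contradict the fact that $U$ avoids polytope distance $1$. Your extra care with the degenerate case $z\in C$ (or $z\in C'$) is a minor refinement the paper glosses over, but it resolves exactly as you say, via the component structure alone.
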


\begin{proof}
If a vertex $z\in B$ is in both $N_B[C]$ and $N_B[C']$, then there is $x\in C$, $y\in C'$ such that $\tilde{d}(x,z)=\tilde{d}(z,y)=1$. Since $C$ and $C'$ are connected components of $U$, we have $\tilde{d}(x,y)>1$. Thus $\tilde{d}(x,y)=2$ and by  \lref{DP1DG2HexaIrreg}, $\Vert  x-y\Vert  _\mathcal{P}=1$, which is impossible, since $U$ avoids $1$. 
\end{proof}

Now we study the different possible connected components:

\begin{lemm}\label{HexaIrregDelta}
Let $U\subset V$ be a set avoiding polytope distance $1$. If $C$ is a connected component of $U$, then
$$ \delta^0_B\leq \frac{3}{8}. $$
\end{lemm}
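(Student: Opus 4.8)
The plan is to classify, up to the translation action of $\tfrac{1}{2}L$ and the point symmetry of $\mathcal{P}$, all the connected subsets $C$ of $\tilde{G}$ that pairwise avoid polytope distance $1$, and to bound $\delta^0_B(C)$ for each of them. I will use the local description of $\tilde{G}$ given above: the $B$-neighbours of a point $a\in A=\tfrac{1}{2}L$ are exactly its six \emph{corners} $a+s_0,\dots,a+s_5$, and each corner $a+s_i$ has the three further $B$-neighbours $a+s_{i-1}$, $a+s_{i+1}$ and the \emph{outer} vertex $a+v_i$. The first step is to record three consequences of \lref{DP1DG2HexaIrreg}, each obtained by exhibiting a common $B$-neighbour of a pair of vertices lying at graph distance $2$: \textup{(A)} two distinct vertices of $C\cap A$ have no common $B$-neighbour, since a shared corner would put them at graph distance $2$ through a vertex of $B$; \textup{(B)} if $a\in C$ then no outer vertex $a+v_i$ belongs to $C$, since $\Vert v_i\Vert_{\mathcal{P}}=1$ (equivalently, $a$ and $a+v_i$ lie at graph distance $2$ through the corner $a+s_i\in B$); \textup{(C)} for every $z\in B$, at most one $B$-neighbour of $z$ lies in $C$, so that the $B$--$B$ edges of $C$ form a matching.

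The crucial step is to deduce $|C\cap A|\le 1$, which I would prove by contradiction. Suppose $a_1\neq a_2$ both lie in $C\cap A$ and pick a shortest path $a_1=p_0,p_1,\dots,p_k=a_2$ in $C$; then $p_1$ is a corner of $a_1$, and $k\ge 3$ by \textup{(A)}. The vertex $p_2$ cannot lie in $A$, for it would then be a cell distinct from $a_1$ having $p_1$ among its corners, against \textup{(A)}; hence $p_2\in B$, and by \textup{(C)} the path has no further $B$--$B$ edge at $p_1$ or at $p_2$. If $p_2$ is a corner of $a_1$, then $p_3$ (which exists because $k\ge 3$) cannot lie in $B$ without violating the matching property at $p_2$, so $p_3\in A$; but then $p_3\neq a_1$ is a cell having the corner $p_2$ of $a_1$, again contradicting \textup{(A)}. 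If on the other hand $p_2$ is not a corner of $a_1$, then $p_2$ is precisely the outer vertex $a_1+v_j$ attached to $p_1=a_1+s_j$, contradicting \textup{(B)}. In every case we reach a contradiction, so $|C\cap A|\le 1$.

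It then remains to treat the two surviving configurations. If $C\cap A=\emptyset$, then $C$ is a connected subset of $B$ and hence, by \textup{(C)}, consists of a single vertex or of a single $B$--$B$ edge; in these two cases $|N_B[C]|=4$, respectively $|N_B[C]|\ge 6$ (two adjacent $B$-vertices have no common $B$-neighbour, because $\tilde{G}$ contains no triangle all of whose vertices lie in $B$), so $\delta^0_B(C)\le \tfrac{1}{3}<\tfrac{3}{8}$. If $C\cap A=\{a\}$, then \textup{(B)} together with the matching property of \textup{(C)} forces every vertex of $C\setminus\{a\}$ to be a corner of $a$, and \textup{(C)} forbids two such corners from being ``two apart'' on the $6$-cycle $a+s_0,\dots,a+s_5$; an elementary count shows that a set of corners with this property has at most two elements, so $|C|\le 3$. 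Finally $N_B[C]$ always contains the six corners of $a$ together with one new outer vertex $a+v_i$ for each corner $a+s_i\in C$ (these being pairwise distinct and distinct from the corners), whence $|N_B[C]|\ge 5+|C|$ and $\delta^0_B(C)\le |C|/(5+|C|)\le\tfrac{3}{8}$, the bound $\tfrac{3}{8}$ being attained by configurations of the form $C=\{a,a+s_i,a+s_{i+1}\}$.

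The step I expect to be the main obstacle is making the combinatorics of $\tilde{G}$ precise enough to justify \textup{(A)}--\textup{(C)} and the path-tracing: concretely, exhibiting for each ``two apart'' pair of $B$-vertices an explicit common $B$-neighbour, and identifying the outer vertices $a+v_i$ as exactly the $B$-neighbours of the corners of $a$ that are not themselves corners of $a$. Once this local picture is secured, the reduction $|C\cap A|\le 1$ and the final count are routine.
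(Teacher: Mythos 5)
Your proposal reaches the same classification and the same bounds as the paper: the paper's own proof enumerates, up to translation, the possible connected components ($\{0\}$, $\{s_i\}$, $\{0,s_i\}$, $\{s_i,s_{i+1}\}$, $\{0,s_i,s_{i+1}\}$, $\{0,s_i,-s_i\}$) and computes $\delta^0_B$ for each, asserting that the completeness of the list is easy to check. What you add is exactly that completeness argument: properties (A)--(C), the reduction $|C\cap A|\leq 1$ by path tracing, and the observation that a subset of $\Z/6\Z$ avoiding cyclic distance $2$ is an independent set in two disjoint triangles, hence has at most two elements. Your final counts ($\delta^0_B\leq |C|/(5+|C|)\leq 3/8$ when $C$ meets $A$, and $\leq 1/4$, $\leq 1/3$ in the purely-$B$ cases) agree with the paper's values, so the proof is correct in substance and follows the same strategy, just with the enumeration justified rather than asserted.

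Two of the local facts you flag do need to be closed, and one of them is load-bearing as your argument is organized. First, your derivation of (C) from \lref{DP1DG2HexaIrreg} presumes the relevant pairs are at graph distance exactly $2$; the cheapest fix is to drop graph distance altogether and note that the pairwise differences of the three $B$-neighbours $a+s_{i-1}$, $a+s_{i+1}$, $a+v_i$ of $a+s_i$ are $\tfrac{\beta_{i+1}}{2}$, $\tfrac{\beta_{i}}{2}$ and $-\tfrac{\beta_{i+2}}{2}$ (these are precisely the computations in the paper's proof of \lref{DP1DG2HexaIrreg}), all of polytope norm $1$, so at most one of the three lies in $U$, which is all (C) says. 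Second, in the component $C=\{z,z'\}\subset B$ your bound $|N_B[C]|\geq 6$ rests on the unproved claim that adjacent $B$-vertices share no $B$-neighbour; this is genuinely needed, since a shared neighbour would give $|N_B[C]|=5$ and a ratio $2/5>3/8$, and it cannot be deduced from the norm computation alone (the shared neighbour need not lie in $U$). The quickest way to settle it is via the translation symmetry you already use: every $B$--$B$ edge of $\tilde{G}$ is, up to translation by $\tfrac12 L$, the pair $\{s_i,s_{i+1}\}$ of consecutive corners of the central cell, and then $N_B[C]=\{s_{i-1},s_i,s_{i+1},s_{i+2},v_i,v_{i+1}\}$ consists of six visibly distinct points (the $s_j$ are interior, the $v_j$ are boundary points). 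With these two verifications inserted, your argument is complete.
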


\begin{proof}
We enumerate the possible connected components. Let us start with the isolated points. Up to translations by $\frac{1}{2}L$, we have: 
\begin{itemize}
\item $C=\{0\}\subset A$.  Its neighborhood is made of six vectors from $B$. So $\delta_B^0(C)=1/6$.
\item $C=\{ s_i\}\subset B$.  We know that such a vertex has three neighbors in $B$, thus $\delta_B^0(C)=1/4$.
\end{itemize}

\begin{figure}[!ht]
\includegraphics[scale=0.6]{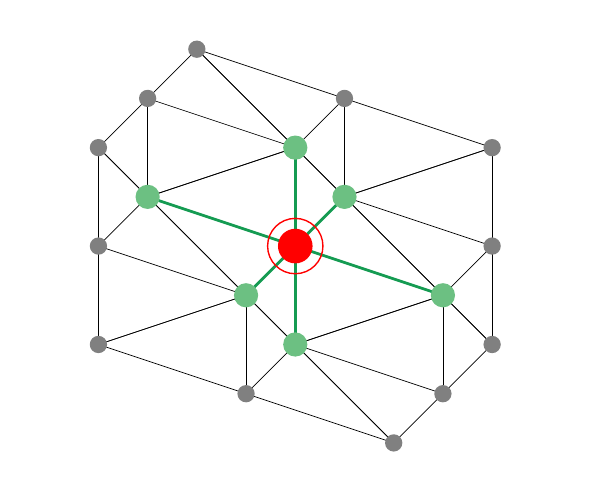} \hspace{10pt}
\includegraphics[scale=0.6]{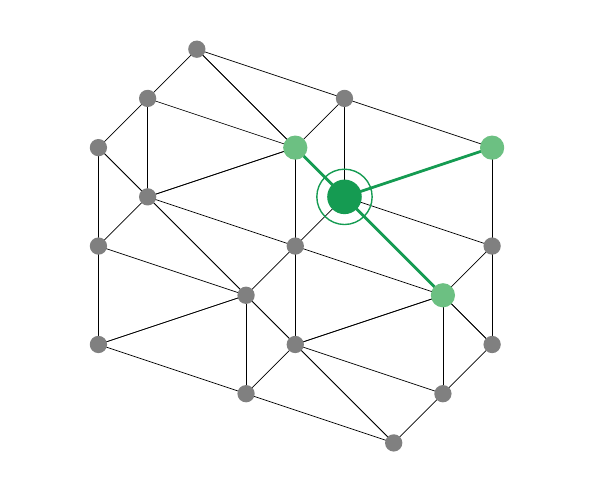} 
\caption{The two possible types of connected component with one element. The circled vertices denote the elements of $C$ and the figure represents all their neighbors in $B$.\label{HexaIrregCC1}}
\end{figure}

We now focus on the connected components of size 2. Since a vertex in $A$ has all its neighbors in $B$, such a connected component can not contain two elements of $A$. Thus, up to translation, we only have: 
\begin{itemize}
\item  $C=\{0,s_i\}$, and the only neighbor in $B$ that is not a neighbor of $0$ is $v_i$. Thus $\delta^0_B=2/7$. 
\item $C=\{ s_i, s_{i+1} \}$ and the neighbors in $B$ are $s_{i-1},v_i,s_{i+2},v_{i+1}$. Thus $\delta^0_B=2/6=1/3$.
\end{itemize}

\begin{figure}[!ht]
\includegraphics[scale=0.6]{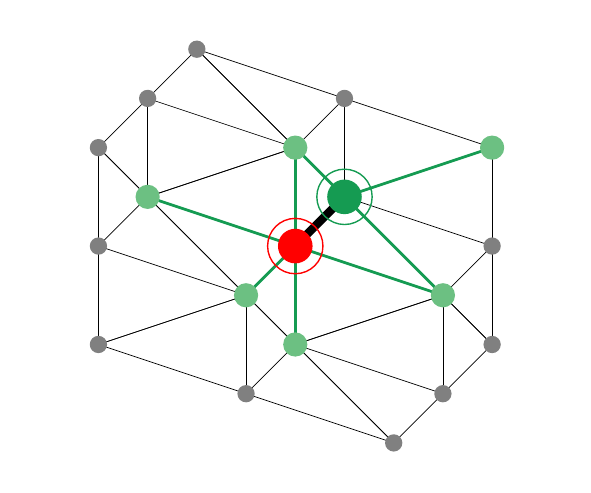} \hspace{10pt}
\includegraphics[scale=0.6]{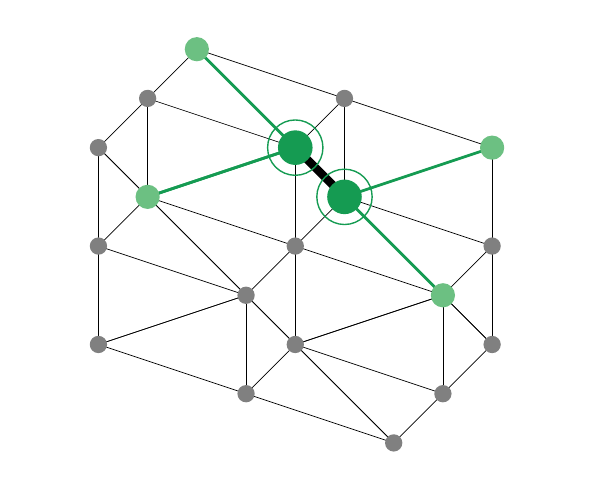} 
\caption{The two possible types of connected component with two elements.\label{HexaIrregCC2}}
\end{figure}

There are up to translations two kinds of connected components of size three:

\begin{itemize}
\item $C= \{0,s_i,s_{i+1}\}$. The only neighbor of $s_{i+1}$ in $B$ that is not a neighbor of $\{0,s_i\}$ is $v_{i+1}$. Thus $\delta^0_B=3/8$.
\item $C= \{0,s_i,-s_{i}\}$. The only neighbor of $-s_{i}$ in $B$ that is not a neighbor of $\{0,s_i\}$ is $-v_{i}$. Thus $\delta^0_B=3/8$. 
\end{itemize}

\begin{figure}[!ht]
\includegraphics[scale=0.6]{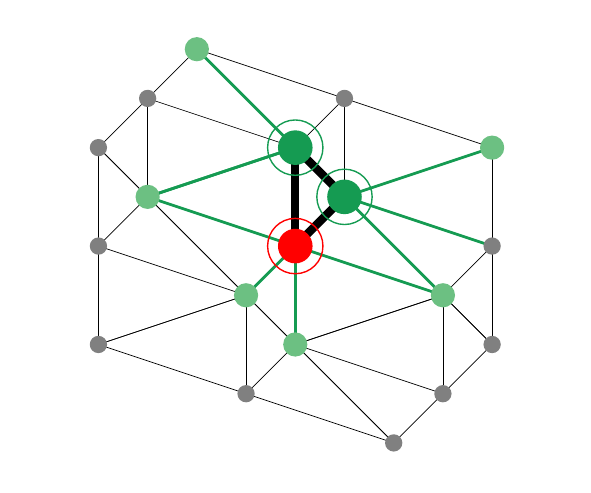} \hspace{10pt}
\includegraphics[scale=0.6]{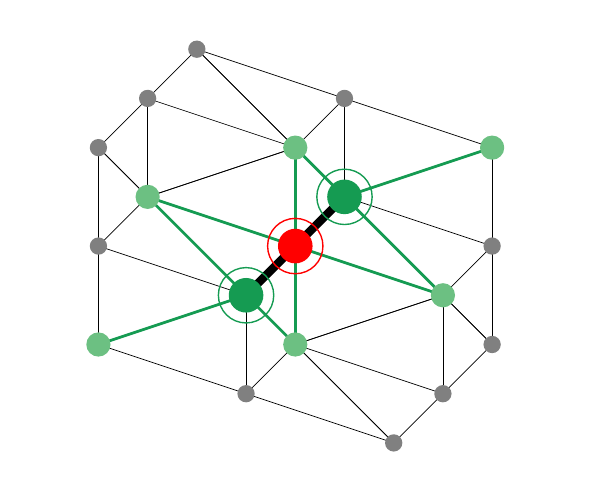} 
\caption{The two possible types of connected component with three elements.\label{HexaIrregCC3}}
\end{figure}

It is easy to check, applying \lref{DP1DG2HexaIrreg}, that we have enumerated all kind of connected components
of $U$. 
\end{proof}

Finally we can put everything together and complete the proof of \tref{HexaIrregTheo}:

\begin{proof}[Proof of \tref{HexaIrregTheo}] 
Let $U\subset V$ avoiding polytope distance 1.
We define
$$ \delta_B(U)=\limsup_{R\to\infty} \frac{|U\cap V_R|}{|B\cap V_R|} $$
where as usual $V_R=V\cap [-R,R]^n$. We have:
$$ \delta_{G_{\mathcal P}}(U)=\delta_B(U) \times \delta_{G_{\mathcal P}} (B), $$
and since $V=A\cup B$ and $B$ is twice as dense as $A$ in $G_{\mathcal P}$, 
$$ \delta_{G_{\mathcal P}}(U)=\frac{2}{3}\delta_B(U). $$
From \lref{HexaIrregVoisDisjoints}, we have $\delta_B(U)\leq \sup_{C\subset U} \delta_B^0(C)$ where $C$ runs over the connected components of $U$. Then \lref{HexaIrregDelta} shows that
$$ \delta_B(U)\leq \frac{3}{8} $$ 
and  we get
$$ \delta_{G_{\mathcal P}}(U) \leq \frac{2}{3} \times \frac{3}{8} =\frac{1}{4}. $$
\end{proof}

\section{The norms associated with the Vorono\"\i\  regions of the lattices $A_n$ and $D_n$}\label{SectionFamilles}

\subsection{The lattice $A_n$} \label{SoussecAn}

Here we consider for any $n\geq 2$, the lattice $$A_n=\Z^{n+1}\cap H,$$ where $H$ is the hyperplane $H=\{(x_1,\ldots,x_{n+1})\in \R^{n+1}\mid \sum_{i=1}^{n+1} x_i=0\}$. Let $\mathcal{P}$ be the Vorono\"\i\  region of $A_n$. We shall prove:

\begin{theo}\label{AnTheo}
For every dimension $n\geq 2$, if $\mathcal{P}$ is the Vorono\"\i\  region of the lattice $A_n$, then
$$m_1(\R^n,\Vert   \cdot\Vert  _\mathcal{P})=\frac{1}{2^n}.$$
\end{theo}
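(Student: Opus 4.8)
The plan is to mimic the two-dimensional hexagonal case, now in dimension $n$, by constructing a discrete induced subgraph $G_\mathcal{P}$ of $G(\R^n,\Vert\cdot\Vert_\mathcal{P})$ together with an auxiliary graph $\tilde{G}$ on the same vertex set that satisfies a suitable weak form of \eqref{Property D}, and then bounding local densities of connected components. By \pref{borneinf} and \lref{LemmeSousGraphe}, it suffices to exhibit such a $G_\mathcal{P}$ with $\bar{\alpha}(G_\mathcal{P})\leq 2^{-n}$. First I would recall the explicit description of the Voronoï region of $A_n$: its relevant vertices (the ``holes'' of $A_n$) are the points $v_k$ obtained by symmetrizing $(\underbrace{1,\dots,1}_{k},\underbrace{0,\dots,0}_{n+1-k})$, i.e.\ subtracting $\frac{k}{n+1}(1,\dots,1)$, for $k=1,\dots,n$; these are the deep and shallow holes, and modulo $A_n$ they, together with $0$, index the $n+1$ cosets of $A_n$ in the dual-type refinement. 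The facet vectors of $\mathcal{P}$ are the minimal vectors of $A_n$, namely $e_i-e_j$.

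Next I would set $V = \tfrac12 A_n \cup \bigcup_{k=1}^{n} (\tfrac12 A_n + w_k)$ for appropriately scaled hole representatives $w_k$ (the analogue of $A=\tfrac12 L$, $B=V_{\mathcal P}+\tfrac12 L$ in the hexagon case), so that $V$ decomposes into $\frac{n+1}{2}\cdot 2^{?}$... more precisely, one wants $V$ to be a union of $2^n$ translates of a common lattice, or rather one wants the ``$B$''-part to carry the density weight in a controlled ratio. The cleaner route, following the hexagon proof, is: let $A=\tfrac12 A_n$, let $B$ be the $\tfrac12 A_n$-orbit of the interior hole-points of $\mathcal{P}$, put $V=A\cup B$, and build $\tilde G$ by joining each $a\in A$ to the interior holes of the copy of $\mathcal{P}$ centered at $a$, and joining interior holes of the same cell that are ``adjacent'' in the appropriate sense. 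Then prove the analogue of \lref{DP1DG2HexaIrreg}: if $x,y\in V$ are at $\tilde d$-distance $2$ via a common neighbor $z\in B$, then $\Vert x-y\Vert_\mathcal{P}=1$. This reduces, as before, to checking that the relevant differences $x-y$ land on $\partial\mathcal{P}$, which is a lattice-combinatorial computation with the vectors $v_k$ and the facet description of $\mathcal{P}$. Then \lref{HexaIrregVoisDisjoints} goes through verbatim: a set $U$ avoiding polytope distance $1$ has pairwise disjoint $N_B[\cdot]$ over its connected components, so $\delta_B(U)\le\sup_C \delta^0_B(C)$, and $\delta_{G_\mathcal{P}}(U) = \delta_{G_\mathcal{P}}(B)\cdot\delta_B(U)$.

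The crux is the enumeration and local-density estimate for connected components of $\tilde G$, i.e.\ the higher-dimensional analogue of \lref{HexaRegDelta*}/\lref{HexaIrregDelta}. One needs to understand the cliques (or, with the weak property, the connected components) of $\tilde G$ containing $0$: these should correspond, via the hole structure, to subsets of the $e_i-e_j$-type moves that still leave one inside a single cell, and the maximal ones should have size $2^n$ divided by the normalizing factor coming from $\delta_{G_\mathcal{P}}(B)$ — concretely I expect the extremal component to be the full vertex set of a half-scaled cell (a ``box'' of $2^n$ points in suitable coordinates, since the combinatorial type relevant here is cube-like after the $\tfrac12$-scaling interacts with the $n+1$ cosets), with closed $B$-neighborhood of the right size to make $\delta^0_B = \frac{2^n}{(n+1)2^{n-1}}$ or similar, yielding $\delta_{G_\mathcal{P}}(U)\le \tfrac1{n+1}\cdot(\text{something}) = 2^{-n}$ after multiplying by $\delta_{G_\mathcal{P}}(B)$. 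The main obstacle, and the place where real work is needed, is precisely this combinatorial classification of connected components of $\tilde G$ in arbitrary dimension and the verification that every one of them has local $B$-density at most the target constant; unlike the planar case there are exponentially many component shapes a priori, so one wants a structural argument (e.g.\ showing each component sits inside one cell and is determined by a ``staircase'' / down-set of holes) rather than a finite case check. A secondary technical point is verifying that $\tilde G$ really is a subgraph-compatible auxiliary graph and that the weak Property D has no unwanted extra coincidences at distance $2$ through $A$-vertices, which should again follow from the facet description of $\mathcal{P}$ together with the identity $v_{k}\equiv$ (shift of $v_{k'}$) $\bmod A_n$.
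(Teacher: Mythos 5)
Your proposal is a plan rather than a proof: you set up the right framework (discretize via \lref{LemmeSousGraphe}, build an auxiliary graph, use a Property-D-type statement and local densities), but you explicitly defer the three steps that constitute the actual content for $A_n$ --- the verification of (a weak) \eqref{Property D} for your construction, the classification of the cliques or connected components in arbitrary dimension, and the counting argument showing every local density is at most the target constant. You even flag this yourself (``the main obstacle, and the place where real work is needed\dots''), and the numbers you guess for the extremal configuration (a ``box'' of $2^n$ points with $\delta^0_B=\frac{2^n}{(n+1)2^{n-1}}$, to be repaired by multiplying by $\delta_{G_\mathcal{P}}(B)$) are unsupported and do not match what actually happens. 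So as it stands there is a genuine gap: nothing in the proposal establishes $\bar{\alpha}(G_\mathcal{P})\leq 2^{-n}$.

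Moreover, by modelling the construction on the \emph{irregular} hexagon (the $A$/$B$ split, neighborhoods restricted to $B$, weak Property D through common $B$-neighbors) you miss the structural fact that makes $A_n$ tractable and lets the \emph{regular}-hexagon argument of Subsection \ref{HexaRegDemi} generalize verbatim: the vertices $V_\mathcal{P}=p_H(V_0)$, $V_0=\{0,1\}^{n+1}\setminus\{(0,\dots,0),(1,\dots,1)\}$, span the dual lattice $A_n^{\#}=p_H(\Z^{n+1})$. One can therefore take $V=\frac12 A_n^{\#}$ and let $\tilde{G}$ be the Cayley graph with generating set $\frac12 V_\mathcal{P}$; the \emph{full} \eqref{Property D} then holds by a short case analysis of $u+u'$ for $u,u'\in V_0$ (the coordinates lie in $\{0,1,2\}$, and either $v+v'=0$, or $v+v'\in V_\mathcal{P}$, or $\Vert v+v'\Vert_\mathcal{P}=2$), so \lref{LemmeVoisinagesDisjoints} applies directly with cliques, no $B$-restriction needed. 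The cliques containing $0$ are exactly the chains $\{0,\frac{p_H(u_1)}{2},\dots,\frac{p_H(u_s)}{2}\}$ with nested supports $I_1\subset\cdots\subset I_s$ (so $|C|\leq n+1$, not $2^n$), and an explicit count gives
$$|N[C]|=(s+1)2^{n+1}-\Bigl(\sum_{i=1}^{s}2^{\,n+1-(w_i-w_{i-1})}+2^{w_s}\Bigr)\geq (s+1)2^n,$$
hence $\delta^0(C)\leq 2^{-n}$, with equality precisely for the maximal chain. These are exactly the pieces your outline leaves open, and they are where the proof lives.
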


 In fact, for $n=2$, the Vorono\"\i\  region of $A_2$ is nothing but the regular hexagon. We are going to generalize to all dimensions $n\geq 2$ the strategy that we used in subsection \ref{HexaRegDemi}.

Let us recall the description of the Vorono\"\i\  region $\mathcal{P}$
of $A_n$ given in \cite[Chapter 21, section 3]{Conway:1987:SLG:39091}. 

The orthogonal projection on $H$ is denoted by $p_H$.  Let, for $1\leq i\leq n$ and $j:=(n+1)-i$, 
$$\begin{aligned} v_i&=
 p_H((\underbrace{0,\ldots,0}_{i \text{ times}},\underbrace{1,\ldots ,1}_{j \text{ times}}))\\
&= ({0,\ldots,0},{1,\ldots ,1}) - \frac{j}{n+1} (1,\ldots ,1) \\ 
&=(\underbrace{\frac{-j}{n+1},\ldots,\frac{-j}{n+1}}_{i \text{ times}},\underbrace{\frac{i}{n+1},\ldots, \frac{i}{n+1}}_{j \text{ times}}).\\ 
 \end{aligned}$$

Let $S$ be the simplex whose vertices are $0$ and the vectors $v_i$. 
Then the vertices of $\mathcal{P}$ are the images of the non zero vertices of $S$ under the permutation group $\mathfrak{S}_{n+1}$. In other words, the set of vertices of $\mathcal{P}$ is 
$$V_\mathcal{P}=\{p_H(u) \mid u\in V_0 \}, \text{ where } V_0=\{0,1\}^{n+1} \setminus \{ (0,\ldots,0),(1,\ldots,1) \}.$$

We also analyze the boundary of $\mathcal{P}$, in order to understand the norm associated with $\mathcal{P}$. The non zero vertices of $S$ are supported by the hyperplane $H_{0,n}$ of $H$ defined by $H_{0,n}=\{ x=(x_0,\ldots x_n)\in H\mid x_n-x_0=1\}$. Applying $\mathfrak{S}_{n+1}$, we find that the faces of $\mathcal{P}$ are supported by all the $H_{i,j}=\{ x=(x_0,\ldots x_n)\in H\mid x_j-x_i=1\}$, for $i\neq j$. So $$\begin{cases} & x\in \mathcal{P} \text{ if and only if for all } i\neq j,\  x_j-x_i\leq 1 \\ &  x\in \partial \mathcal{P} \text{ if and only if  } \max_{i\neq j} (x_j-x_i) = \max_j x_j - \min_i x_i =1, \end{cases}$$
and more generally the norm $\Vert x\Vert_\mathcal{P}$ of a vector $x\in H$ is given by
$$ \Vert  x\Vert  _\mathcal{P} = \max_j x_j - \min_i x_i .$$
Note that if $x=p_H(u)$, because $H^{\perp}=\R (1,\dots,1)$, we have  $${\max_j x_j - \min_i x_i = \max_j u_j - \min_i u_i}.$$

The vertices of $\mathcal{P}$ generate a lattice, which is the dual lattice of $A_n$:

\begin{lemm}
The vertices of $\mathcal{P} $ span over $\Z$ the lattice  $ A_n^\# = p_H(\Z^{n+1}) $.
\end{lemm}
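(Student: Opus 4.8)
The plan is to show the two inclusions between the $\Z$-span of $V_{\mathcal P}$ and $A_n^\# = p_H(\Z^{n+1})$. First I would recall that $A_n^\#$ is by definition the image under the orthogonal projection $p_H$ of the cubic lattice $\Z^{n+1}$; equivalently, since $H^\perp = \R(1,\dots,1)$, it is the quotient of $\Z^{n+1}$ by $\Z(1,\dots,1)$, viewed inside $H$. The vertices of $\mathcal P$ are exactly the points $p_H(u)$ with $u \in V_0 = \{0,1\}^{n+1}\setminus\{(0,\dots,0),(1,\dots,1)\}$, so each such vertex manifestly lies in $p_H(\Z^{n+1}) = A_n^\#$. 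Hence $\Span_\Z(V_{\mathcal P}) \subseteq A_n^\#$, which is the easy inclusion.

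For the reverse inclusion $A_n^\# \subseteq \Span_\Z(V_{\mathcal P})$, it suffices to exhibit a $\Z$-basis of $A_n^\#$ lying in $\Span_\Z(V_{\mathcal P})$; equivalently, to show that the images $p_H(e_i)$ of the standard basis vectors $e_i$ of $\Z^{n+1}$ (which generate $A_n^\#$ since they generate $\Z^{n+1}$) are integer combinations of the $p_H(u)$, $u \in V_0$. Since $p_H$ is linear, it is enough to check that each $e_i$ differs from an integer combination of elements of $V_0$ by a multiple of $(1,\dots,1)$. I would use, for instance, the telescoping identities among the vectors $\hat v_i := (\underbrace{0,\dots,0}_{i},\underbrace{1,\dots,1}_{j})$ (with $j = n+1-i$), all of which lie in $V_0$ for $1 \le i \le n$: one has $\hat v_{i-1} - \hat v_i = e_i$ for $2 \le i \le n$, so all $p_H(e_i)$ for $i = 2,\dots,n$ are in the span, and then $p_H(e_1)$ is recovered from $p_H(\hat v_1) = p_H(e_1 + \dots)$ — more directly, $\hat v_1 = e_1^{\,c}$ type relations together with $\sum_i e_i \equiv 0$ modulo $H^\perp$ (since $p_H(\sum_i e_i) = 0$) let one solve for the last coordinate vector. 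So every generator of $A_n^\#$ lies in $\Span_\Z(V_{\mathcal P})$.

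Combining the two inclusions gives $\Span_\Z(V_{\mathcal P}) = A_n^\# = p_H(\Z^{n+1})$, and the standard fact that $A_n^\#$ is the dual lattice of $A_n$ (see \cite[Chapter 4]{Conway:1987:SLG:39091}) finishes the statement. The only mild subtlety — and the place where a little care is needed — is the bookkeeping modulo $H^\perp = \R(1,\dots,1)$: one must make sure that the integer relations chosen among the $\hat v_i$ really do produce each $e_i$ up to an \emph{integer} multiple of $(1,\dots,1)$, not merely a real multiple, so that after applying $p_H$ one genuinely gets a $\Z$-span statement rather than an $\R$-span one. This is straightforward but is the one point worth writing out explicitly. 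Everything else is routine linear algebra over $\Z$.
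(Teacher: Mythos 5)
Your proof is correct, and its overall shape (two inclusions, elementary linear algebra over $\Z$) matches the paper's, but both halves are carried out differently. For $\Span_\Z(V_\mathcal{P})\subseteq A_n^\#$ the paper uses the dual-lattice characterization, checking that $\langle x, p_H(u)\rangle=\sum_{i:\,u_i=1}x_i\in\Z$ for all $x\in A_n$, whereas you simply observe that $V_0\subset\Z^{n+1}$, so every vertex lies in the subgroup $p_H(\Z^{n+1})$; given the identification $A_n^\#=p_H(\Z^{n+1})$ (which both you and the paper take as known), your version is if anything more direct. For the reverse inclusion the paper's argument is shorter than yours: every standard basis vector $e_i$ already belongs to $V_0$, so each $p_H(e_i)$ is itself a vertex of $\mathcal{P}$, and any $x=p_H(z_0,\ldots,z_n)$ with $z_i\in\Z$ is immediately $\sum_i z_i\,p_H(e_i)\in\Span_\Z(V_\mathcal{P})$; no telescoping among the simplex vertices is needed. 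Your telescoping route does work --- for $2\le i\le n$ one has $e_i=\hat v_{i-1}-\hat v_i$, while $\hat v_n=e_{n+1}$ and $\hat v_1=(1,\ldots,1)-e_1$ recover the remaining two after applying $p_H$ --- and it even gives the slightly stronger fact that the $n$ simplex vertices $v_1,\ldots,v_n$ alone generate $A_n^\#$. Two small remarks: the step you flag as delicate is actually vacuous, since $p_H$ annihilates all of $\R(1,\ldots,1)$, so any \emph{real} multiple of $(1,\ldots,1)$ appearing in your relations disappears after projection and no integrality check is needed there; on the other hand, the point where you are genuinely vague (how exactly $p_H(e_1)$ and $p_H(e_{n+1})$ are obtained, ``solve for the last coordinate vector'') is the one that should be written out, and the identities above settle it.
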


\begin{proof}
Let $v\in V_{\mathcal{P}}$. There is $u\in V_0$ such that $v=p_H(u)$. Since $(u-p_H(u))\in H^\perp$, we have, for every $x\in A_n = \Z^{n+1}\cap H$,
$$ \begin{aligned} \langle x, v\rangle = \langle x, p_H(u) \rangle = \langle x, u\rangle = \sum_{i,u_i=1} x_i \in \Z ,\end{aligned}$$
so $\Span_\Z(V_\mathcal{P})\subset A_n ^\#$.

Now let us take $x\in A_n^\# = p_H(\Z^{n+1})$, so $x=p_H(z_0,\ldots,z_n)$, with $z_i\in \Z$. Then 
$$x = p_H(z_0,\ldots,z_n)=\sum_{i=0}^n z_ip_H(0,\ldots,0,\underbrace{1}_i,0\ldots, 0)\in \Span_\Z(V_\mathcal{P}).$$

Thus $\Span_\Z(V_\mathcal{P})=A_n ^\#$.
\end{proof}

We consider  the subgraph $G_\mathcal{P}$ of $G(\R^n,\Vert \cdot \Vert_\mathcal{P})$ induced by the set of vertices $\frac{1}{2} A_n ^\#$, and  the auxiliary graph $\tilde{G}$ which is the Cayley graph on $\frac{1}{2} A_n ^\#$
associated with the generating set $\frac{1}{2}V_\mathcal{P}$. These graphs  are the generalizations of the graphs that we considered in subsection \ref{HexaRegDemi}. Here we show that $\tilde{G}$ satisfies the same remarkable property:

\begin{lemm}\label{DP1DG2An}
The graph $\tilde{G}$ satisfies \eqref{Property D}.
\end{lemm}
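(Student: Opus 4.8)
The plan is to exploit that $\tilde G$ is a Cayley graph on the \emph{abelian} group $\tfrac12 A_n^\#$ with generating set $\tfrac12 V_{\mathcal P}$, which is symmetric: $V_0$ is stable under the complementation $u\mapsto (1,\dots,1)-u$, and since $p_H(1,\dots,1)=0$ this gives $-p_H(u)=p_H((1,\dots,1)-u)\in V_{\mathcal P}$. Hence $\tilde G$ is vertex-transitive, so we may assume $u_1=0$ and, writing $u:=u_2$, that $\tilde d(0,u)=2$. In an abelian Cayley graph the ball of radius $2$ around $0$ is $\{0\}\cup\tfrac12 V_{\mathcal P}\cup\bigl(\tfrac12 V_{\mathcal P}+\tfrac12 V_{\mathcal P}\bigr)$, so $u=\tfrac12\bigl(p_H(a)+p_H(b)\bigr)=\tfrac12 p_H(a+b)$ for some $a,b\in V_0\subset\{0,1\}^{n+1}$, and moreover $u\notin\{0\}\cup\tfrac12 V_{\mathcal P}$ since $\tilde d(0,u)>1$.

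Next I would feed this into the explicit norm formula. As $a+b$ has entries in $\{0,1,2\}$ and $\Vert p_H(w)\Vert_{\mathcal P}=\max_k w_k-\min_k w_k$, homogeneity of the norm gives $\Vert u\Vert_{\mathcal P}=\tfrac12\Delta$ with $\Delta:=\max_k(a+b)_k-\min_k(a+b)_k\in\{0,1,2\}$. It then suffices to rule out $\Delta\in\{0,1\}$ by showing that in those cases $u\in\{0\}\cup\tfrac12 V_{\mathcal P}$, contradicting $\tilde d(0,u)=2$; this forces $\Delta=2$, i.e.\ $\Vert u_1-u_2\Vert_{\mathcal P}=1$, which is \eqref{Property D}. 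The case check is short. If $\Delta=0$, then $a+b$ is constant: if it is $(0,\dots,0)$ or $(2,\dots,2)$ then $a=b=(0,\dots,0)$ or $a=b=(1,\dots,1)$, both excluded from $V_0$; if it is $(1,\dots,1)$ then $u=\tfrac12 p_H(1,\dots,1)=0$. If $\Delta=1$, then either $a+b\in\{0,1\}^{n+1}$, which being non-constant lies in $V_0$, so $u=\tfrac12 p_H(a+b)\in\tfrac12 V_{\mathcal P}$; or $a+b\in\{1,2\}^{n+1}$, in which case $(a+b)-(1,\dots,1)\in\{0,1\}^{n+1}$ is non-constant, hence in $V_0$, and $p_H\bigl((a+b)-(1,\dots,1)\bigr)=p_H(a+b)$ gives again $u\in\tfrac12 V_{\mathcal P}$.

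I do not expect a genuine obstacle: this is the natural generalization of \lref{DP1DG2}, and the one point that needs care is the reduction ``$\Delta\leq 1\Rightarrow\tilde d(0,u)\leq 1$'', which rests on the translation $w\mapsto w-(1,\dots,1)$ leaving $p_H$ invariant --- the very property of $p_H$ that also makes the generating set $\tfrac12 V_{\mathcal P}$ symmetric.
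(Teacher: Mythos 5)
Your proof is correct and follows essentially the same route as the paper: after reducing to $u_1=0$ by vertex-transitivity, both arguments analyze the coordinates of $a+b\in\{0,1,2\}^{n+1}$ and use the identity $p_H(1,\dots,1)=0$ to show that the sum is either $0$, an element of $\frac12 V_{\mathcal P}$, or a point with $\max-\min=2$, i.e.\ of polytope norm $1$ after halving. Your reorganization via $\Delta=\max-\min$ is just a repackaging of the paper's four-case check, so there is nothing to flag.
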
 

\begin{proof}
We follow the proof of \lref{DP1DG2}. We may assume $x=0$, and we need to show that, for $v,v'\in V_{\mathcal{P}}$, if $\frac{v+v'}{2}\neq 0$ then it is either some $\frac{v''}{2}\in \frac{1}{2}V_{\mathcal{P}}$, or an element of $\partial \mathcal{P}$. Equivalently, we study $v+v'$ and show that one of the three following situations occurs:
$$\begin{cases} & v+v'=0, \\ & v+v'=v''\in V_\mathcal{P}, \\& \Vert  v+v'\Vert  _\mathcal{P}=2. \end{cases}$$

Let $u$ and $u'$ be elements of $V_0=\{0,1\}^{n+1} \setminus \{ (0,\ldots,0),(1,\ldots,1)\}$ such that $v=p_H(u)$ and $v'=p_H(u')$. The coordinates of the vector $u+u'$ belong to $\{0,1,2\}$, but  cannot be all $0$ nor all $2$. We explore the  possible cases:
\begin{itemize}
\item If $u+u'=(1, \ldots, 1)$, then $p_H(u+u')=(0, \ldots ,0)$, and $v+v'=0$.
\item If the coordinates of $u+u'$ are only $0$'s and $1$'s, then $u+u'\in V_0$, and thus $v+v'\in V_\mathcal{P}$.
\item If the coordinates of $u+u'$ are only $1$'s and $2$'s, we may decompose $u+u'$ as ${u+u'= (1,\ldots,1) + w}$, and $w$ must be an element of $V_0$. This implies that $v+v'=p_H(w)\in V_\mathcal{P}$.
\item The last  remaining case is when both $0$'s and $2$'s appear in the coordinates of $U=u+u'$. 
Then, $\max_j U_j - \min_i U_i=2$, that is $\Vert  v+v'\Vert  _\mathcal{P}=2$.
\end{itemize}
\end{proof}

Because $\tilde{G}$ satisfies \eqref{Property D},  \lref{LemmeVoisinagesDisjoints} is satisfied by $\tilde{G}$. So we can proceed to analyze  the cliques of $\tilde{G}$, and for each of them, determine its local density. Since $\tilde{G}$ is vertex transitive, we only describe the cliques containing $0$. For $u\in V_0$, we define its support  $I=\{ i \in \{1,\ldots,n+1\}, \ u_i=1\}$.

\begin{lemm}
The cliques of $\tilde{G}$ containing $0$ are the sets of the form $$\left\{0,\frac{p_H(u_1)}{2},\ldots,\frac{p_H(u_s)}{2}\right\}$$ such that if $I_i$ is the support of $u_i$, then 
$$ I_1\subset I_2 \subset \ldots \subset I_s.$$
In particular, since $s\leq n$, a clique can not contain more than $n+1$ vertices. 
\end{lemm}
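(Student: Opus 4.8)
The plan is to characterize when a set $\{0, \tfrac{p_H(u_1)}{2},\ldots,\tfrac{p_H(u_s)}{2}\}$ forms a clique in $\tilde G$ by translating the graph-distance-$1$ condition into a condition on the supports $I_k$ of the $u_k$. Two vertices $x$ and $y$ of $\tilde G$ are adjacent precisely when $x-y \in \tfrac12 V_{\mathcal P}$, i.e.\ $2(x-y) = p_H(w)$ for some $w\in V_0 = \{0,1\}^{n+1}\setminus\{(0,\ldots,0),(1,\ldots,1)\}$. Since $0$ is in the clique, each $\tfrac{p_H(u_k)}{2}$ is automatically adjacent to $0$. The real content is the pairwise adjacency among the $\tfrac{p_H(u_k)}{2}$: the pair $\tfrac{p_H(u_i)}{2},\tfrac{p_H(u_j)}{2}$ is an edge iff $p_H(u_i)-p_H(u_j) = p_H(u_i - u_j)$ lies in $V_{\mathcal P}$, which (because $p_H$ only kills the all-ones direction) happens iff $u_i - u_j$ is, up to adding a constant vector $c(1,\ldots,1)$, an element of $V_0$. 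The coordinates of $u_i-u_j$ lie in $\{-1,0,1\}$; such a vector becomes a $0/1$ vector after a shift only if it takes at most two of these three values, i.e.\ either no coordinate equals $-1$ (then $u_i - u_j\in\{0,1\}^{n+1}$, forcing $I_j\subset I_i$) or no coordinate equals $1$ (then $u_i-u_j+(1,\ldots,1)\in\{0,1\}^{n+1}$, forcing $I_i\subset I_j$); and in either shifted form one checks it is genuinely in $V_0$, not the excluded all-$0$ or all-$1$ vector, because $u_i\neq u_j$ (distinct clique vertices) and neither is all-$0$ or all-$1$.

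**Next I would** assemble these pairwise conditions. The pair $\{u_i,u_j\}$ gives an edge exactly when the supports $I_i$ and $I_j$ are \emph{comparable} under inclusion, i.e.\ $I_i\subset I_j$ or $I_j\subset I_i$. Hence $\{0,\tfrac{p_H(u_1)}2,\ldots,\tfrac{p_H(u_s)}2\}$ is a clique iff the supports $I_1,\ldots,I_s$ are pairwise comparable, that is, form a chain. Relabelling, we may assume $I_1\subset I_2\subset\cdots\subset I_s$; and these inclusions are strict since the $u_k$ (hence the $I_k$) are distinct. This is exactly the claimed description. For the cardinality bound: a chain of distinct subsets $I_1\subsetneq\cdots\subsetneq I_s$ of $\{1,\ldots,n+1\}$ with each $I_k$ neither empty nor the whole set (since $u_k\in V_0$) has length $s\le (n+1)-2 = n-1$; including the vertex $0$, a clique containing $0$ has at most $n$ vertices. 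Together with vertex-transitivity of $\tilde G$, \emph{every} clique has at most $n$ vertices.

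**A small subtlety to be careful about:** the statement as written says ``$s\le n$, so a clique has at most $n+1$ vertices'' — I would double-check whether the intended bound on $s$ is $n-1$ (giving $n$-vertex cliques, consistent with the eventual density $2^{-n}$ target) or whether the paper is tracking the count $s+1$ of clique elements; the argument above gives $s\le n-1$ from the chain length in a set of size $n+1$ with the two endpoints $\emptyset$ and the full set excluded, so I would state the bound as $s\le n-1$ unless the surrounding narrative forces the weaker $s\le n$. In any case the key inequality to extract is simply: \emph{the longest chain of admissible supports has bounded length}, which is pure combinatorics once the clique-iff-chain equivalence is established.

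**The main obstacle** is the case analysis in the first paragraph: verifying cleanly that $p_H(u_i - u_j)\in V_{\mathcal P}$ forces $I_i$ and $I_j$ to be comparable, and conversely. The forward direction requires arguing that a $\{-1,0,1\}$-valued vector which coincides, modulo the all-ones line, with an element of $V_0$ cannot use all three values $-1,0,1$ — this is where one uses that $V_0$ consists of $0/1$ vectors and that adding $c(1,\ldots,1)$ can shift the value set by at most a translation, so the set of distinct entry-values has size $\le 2$. One must also handle the degenerate possibilities $I_i = I_j$ (ruled out by distinctness) and $u_i - u_j$ mapping to $0$ or to an excluded vertex (ruled out because neither $u_i$ nor $u_j$ is all-$0$ or all-$1$). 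The converse direction — a chain of supports yields all required edges — is then a direct computation of $p_H(u_i-u_j)$ exactly as in the proof of \lref{DP1DG2An}. Everything else is bookkeeping.
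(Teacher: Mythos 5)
Your main argument is correct and is essentially the paper's own proof: you reduce adjacency of $\frac{p_H(u_i)}{2}$ and $\frac{p_H(u_j)}{2}$ to the condition $p_H(u_i-u_j)\in V_{\mathcal P}$, observe that $u_i-u_j$ has entries in $\{-1,0,1\}$ and can agree with an element of $V_0$ modulo $\R(1,\ldots,1)$ only if it does not use both the values $1$ and $-1$ (the paper phrases this as $\Vert v-v'\Vert_{\mathcal P}=2$ in that case), handle the two remaining cases by the shifts $c=0$ and $c=1$, and conclude that adjacency is equivalent to comparability of the supports, so that a clique through $0$ corresponds to a chain of supports.

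The genuine flaw is in your final count, and your suggested ``correction'' goes the wrong way. A strictly increasing chain $I_1\subsetneq\cdots\subsetneq I_s$ of non-empty proper subsets of $\{1,\ldots,n+1\}$ can have sizes $1,2,\ldots,n$, hence length up to $s=n$, not $n-1$: excluding $\emptyset$ and the full set from a maximal chain of length $n+2$ in the Boolean lattice leaves $n$ sets, so your arithmetic $(n+1)-2=n-1$ is off by one. The bound $s\le n$ is sharp, realized by $u_i=(1,\ldots,1,0,\ldots,0)$ with $i$ ones for $i=1,\ldots,n$, and these $(n+1)$-vertex cliques are not a curiosity: they are exactly the cliques for which \lref{AnDelta*} shows $\delta^0(C)=\frac{1}{2^n}$, and already for $n=2$ (the regular hexagon) the paper exhibits the $3$-vertex clique $\left\{0,\frac{v_0}{2},\frac{v_1}{2}\right\}$ attaining density $\frac14$. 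So you should state the conclusion as in the lemma, $s\le n$ and at most $n+1$ vertices per clique; claiming $s\le n-1$ would contradict these explicit examples and would break the sharpness analysis that follows.
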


\begin{proof}
Let $C$ be a clique of $\tilde{G}$, and assume $0\in C$. Then the other elements of $C$ must belong to $\frac{1}{2}V_\mathcal{P}$ and since $C$ is a clique, they must be adjacent in the graph. In other words, if $\frac{v}{2},\frac{v'}{2}\in C$, then $\frac{v-v'}{2}\in \frac{1}{2}V_\mathcal{P}$. Let $v\neq v'\in V_\mathcal{P}$, and $u,u'\in V_0$ such that $v=p_H(u)$ and $v'=p_H(u')$. We denote by $I$ and $I'$ the respective supports of $u$ and $u'$. For $i\in\{1,\ldots,n+1\}$, the $i$th coordinate of $u-u'$ is:
$$\begin{cases} 
 1 & \text{ if } i\in I\setminus I', \\
 -1 & \text{ if } i\in I'\setminus I, \\
 0 & \text{ otherwise. } 
 \end{cases}$$

If both $1$ and $-1$ appear in the coordinates of $u-u'$, then $\Vert  v-v'\Vert  _\mathcal{P}=2$, and $v-v'\notin V_\mathcal{P}$.  By definition of $V_0$ and since $v\neq v'$, the coordinates of $u-u'$ must take two 
different values. Two cases remain: if $u-u'$ contains only $0$'s and $1$'s, $u-u'\in V_0$ and $v-v'\in V_\mathcal{P}$; and if it contains only $0$'s and $-1$'s, then we can write $u-u'=w-(1,\ldots,1)$, with $w\in V_0$, so that $v-v'\in V_\mathcal{P}$ as well.

To conclude, we find that $v-v'\in V_\mathcal{P}$ if and only if $I\subset I'$ or $I'\subset I$. 

\end{proof}

\begin{lemm}\label{AnDelta*}
For every  clique $C$ of $\tilde{G}$,
$$\delta^0(C)\leq\frac{1}{2^n}.$$
\end{lemm}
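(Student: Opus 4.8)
\textbf{Proof plan for \lref{AnDelta*}.}

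The plan is to exploit the explicit description of the cliques of $\tilde G$ obtained in the previous lemma and to compute (or bound) the size of the closed neighborhood $N[C]$ of such a clique. Since $\tilde G$ is vertex transitive we may assume $0\in C$, so $C=\{0,\tfrac{p_H(u_1)}2,\dots,\tfrac{p_H(u_s)}2\}$ with supports $I_1\subset I_2\subset\dots\subset I_s$, and $\delta^0(C)=\frac{s+1}{|N[C]|}$. It therefore suffices to prove $|N[C]|\ge (s+1)2^n$, and I would in fact aim to identify $N[C]$ exactly. First I would normalise the clique: writing $i_0=0<i_1<\dots<i_s\le n$ for the cardinalities $i_k=|I_k|$, and using the $\mathfrak S_{n+1}$-action, I can assume $I_k=\{1,\dots,i_k\}$, so the clique is determined (up to symmetry) just by the chain of sizes $0=i_0<i_1<\dots<i_s\le n$.

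Next I would describe $N[C]$ concretely. A vertex $w\in N[C]$ is either in $C$ or of the form $\tfrac{p_H(u_k)}2+\tfrac{p_H(u)}2$ (for some $k$, including $k=0$ with the convention $u_0=0$) with $u\in V_0$; equivalently $2w=p_H(u_k+u)$ where $u_k+u$ has coordinates in $\{0,1,2\}$. The key computation is to count the distinct values of $p_H(u_k+u)$ as $k$ ranges over $0,\dots,s$ and $u$ over $V_0$. The cleanest route is to relate this to the lattice $A_n^{\#}$: every element of $\tfrac12 A_n^{\#}$ lies in a unique translate of $\mathcal P$ by $A_n^{\#}$ (since $\mathcal P$ tiles by $A_n^{\#}=\Span_\Z V_\mathcal P$), so $|N[C]|$ equals $(s+1)$ times the number of translates of $\mathcal P$ that $N[C]$ meets, provided I show that $N[C]$ meets each such translate in a full copy of $N[\{0\}]\cap(\text{that translate})$ — or, more simply, that $N[C]$ is a disjoint union of $s+1$ translates of some common set. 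In fact the structure of a clique suggests the decomposition $N[C]=\bigsqcup_{k=0}^{s}\big(\tfrac{p_H(u_k)}2+N_k\big)$ where $N_k$ collects the "new" neighbors, and the claim becomes $\sum_k|N_k|\ge(s+1)2^n$ with equality expected.

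The concrete counting I would carry out: $N[\{0\}]=\{0\}\cup\tfrac12 V_\mathcal P$ has $1+(2^{n+1}-2)=2^{n+1}-1$ elements, giving $\delta^0(\{0\})=\frac1{2^{n+1}-1}<\frac1{2^n}$, which handles $s=0$. For $s\ge1$ I expect $|N[C]|=(s+1)(2^{n+1}-1)-(\text{overlaps})$, and the heart of the proof is to show the overlaps are small enough, i.e. $|N[C]|\ge (s+1)2^n$. Concretely, two neighbors $\tfrac{p_H(u_k+u)}2$ and $\tfrac{p_H(u_\ell+u')}2$ coincide iff $u_k+u$ and $u_\ell+u'$ differ by a multiple of $(1,\dots,1)$; using $I_k\subset I_\ell$ one checks these coincidences correspond to $u$ supported (in the appropriate sense) on the "gap" $I_\ell\setminus I_k$, and counting them reduces to a sum of powers of $2$ indexed by the gap sizes $i_k-i_{k-1}$ and $n-i_s$. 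I would set $a_0=i_1,\ a_k=i_{k+1}-i_k$ for $1\le k\le s-1,\ a_s=(n+1)-i_s$ (so $\sum a_k=n+1$, each $a_k\ge1$), and reduce the desired inequality to an elementary inequality among $2^{a_k}$'s, which is then dispatched by a short induction on $s$ or by convexity.

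\textbf{Main obstacle.} The real work — and the step most likely to be delicate — is the exact bookkeeping of which pairs $(k,u)$ and $(\ell,u')$ give the same vertex of $\tfrac12 A_n^{\#}$, i.e. pinning down $|N[C]|$ as an explicit function of the gap sizes $a_0,\dots,a_s$; once that formula is in hand, the inequality $|N[C]|\ge(s+1)2^n$ should be a routine (if slightly fiddly) estimate. I would therefore spend most of the effort making the decomposition $N[C]=\bigsqcup_k(\tfrac{p_H(u_k)}2+N_k)$ and the count $|N_k|$ airtight, perhaps by exhibiting, for each $k$, an explicit bijection between $N_k$ and a set of $0/1$-strings, and only then turn to the final arithmetic.
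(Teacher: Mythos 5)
You have the same strategy as the paper's own proof: normalise the clique to $C=\{0,\frac{p_H(u_1)}{2},\dots,\frac{p_H(u_s)}{2}\}$ with nested supports $I_1\subset\dots\subset I_s$ taken to be initial segments, write $\delta^0(C)=\frac{s+1}{|N[C]|}$, bound $|N[C]|$ from below by attributing to each clique element its ``new'' neighbours, and finish with an elementary inequality in the gap sizes. However, the step you defer as the ``main obstacle'' --- the exact bookkeeping of the coincidences $p_H(u_k+u)=p_H(u_\ell+u')$, i.e.\ an explicit formula or lower bound for $|N[C]|$ --- is precisely the mathematical content of the lemma, and you do not carry it out; nor do you state the final inequality you would need. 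As written this is a plausible plan, not a proof.

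For comparison, here is how the paper fills that hole, and it confirms your expectations. Writing $w_i=|I_i|$ and $w_0=0$, the element $0$ contributes the $2^{n+1}-1$ points of $\frac{1}{2}p_H(V_0\cup\{0\})$, and $\frac{p_H(u_i)}{2}$ contributes exactly $2^{w_{i-1}}(2^{w_i-w_{i-1}}-1)(2^{n+1-w_i}-1)$ new neighbours: the vector $u$ may be arbitrary on $I_{i-1}$, must not be identically $0$ on $I_i\setminus I_{i-1}$ (else the point already arises from $u_{i-1}$), and must not be identically $1$ outside $I_i$ (else, after subtracting $(1,\dots,1)$, it already arises from $0$). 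Summing gives $|N[C]|=(s+1)2^{n+1}-\bigl(\sum_{i=1}^{s}2^{\,n+1-(w_i-w_{i-1})}+2^{w_s}\bigr)$, and since each gap satisfies $w_i-w_{i-1}\geq 1$ and $w_s\leq n$, the bracketed sum of $s+1$ terms is at most $(s+1)2^n$, whence $|N[C]|\geq (s+1)2^n$ --- exactly the ``elementary inequality among powers of $2$ indexed by gaps'' you anticipated. One small correction to your heuristics: the hoped-for decomposition of $N[C]$ into $s+1$ translates of a common set via the tiling by $A_n^{\#}$, with equality $|N[C]|=(s+1)2^n$ ``expected'', does not hold in general; the contributions of the clique elements are unequal ($2^{n+1}-1$ from $0$, strictly fewer from each subsequent element), and equality occurs only for the maximal cliques, where all gaps equal $1$ and $w_s=n$. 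Your fallback decomposition into sets of new neighbours is the right one, but it still needs the count above to be made precise before the lemma is proved.
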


\begin{proof}

Let $\left\{0,\frac{p_H(u_1)}{2},\ldots,\frac{p_H(u_s)}{2}\right\}$ be a clique. By symmetry, we may assume that $$u_i=(\underbrace{1,\ldots, 1}_{w_i},0,\ldots,0), $$
where $w_i=|I_i|$. We want to count the vertices in $$N[C]=\frac{1}{2}\left( \{0,p_H(u_1),\ldots,p_H(u_s)\} + V_\mathcal{P} \right).$$   Since $0\in C$, the set $\left( \{0,p_H(u_1),\ldots,p_H(u_s)\} + V_\mathcal{P} \right)$ must contain all the images of $V_0\cup \{0\}$ by $p_H$: there are ${2^{n+1}-1}$ such vertices. We count,
 for each $i=1,\dots,s$, how many new neighbors are provided by $p_H(u_i)+V_{\mathcal P}$. We find that
\begin{itemize}
\item The vector $$u_1=(\underbrace{1,\ldots, 1}_{w_1},0,\ldots,0),$$ provides ${(2^{w_1}-1)(2^{n+1-w_1}-1)}$ {new neighbors}.
\item The vector $$u_2=(\underbrace{1,\ldots, 1}_{w_1},\underbrace{1,\ldots, 1}_{w_2-w_1},0,\ldots,0),$$
 provides ${2^{w_1}(2^{w_2-w_1}-1)(2^{n+1-w_2}-1)}$ {new neighbors}.
\item For any $2\leq i \leq s$, the vector $u_i$ will provide ${2^{w_{i-1}}(2^{w_i-w_{i-1}}-1)(2^{n+1-w_i}-1)}$ {new neighbors}.
\end{itemize}
By summing all the values, if we set  $w_0=0$, we get:
$$\begin{aligned} 
|N[C]|&=2^{n+1}-1 + \sum_{i=1}^{s} 2^{w_{i-1}}(2^{w_i-w_{i-1}}-1)(2^{n+1-w_i}-1) \\
 & = (s+1)2^{n+1} - (\sum_{i=1}^{s}2^{n+1-(w_i-w_{i-1})}+ 2^{w_s}).
  \end{aligned}$$
Since $w_s\leq n$ and for every $i$, $(w_i-w_{i-1})\geq 1$, we have 
$$2^{w_s} +\sum_{i=1}^{s}2^{n+1-(w_i-w_{i-1})}\leq (s+1)2^n,$$
and this implies
$$|N[C]|\geq (s+1)2^{n+1} - (s+1)2^n = (s+1)2^n.$$
Finally, the local density of $C$ satisfies:
$$ \delta^0(C) = \frac{|C|}{|N[C]|} = \frac{s+1}{|N[C]|} \leq \frac{1}{2^n} ,$$
and we may note that this bound is sharp if and only if $w_s=n$ and for every $i$, $w_i-w_{i-1} = 1$, that is when $C$ is a maximal clique of the form
$$ \{0, (1,0,\ldots,0), (1,1,0,\ldots,0), \ldots ,(1,\ldots,1,0,0), (1,\ldots,1,0)\}.  $$ 
\end{proof}

Now we can conclude the proof of \tref{AnTheo}:

\begin{proof}[Proof of \tref{AnTheo}] 
Following \lref{AnDelta*} and \lref{LemmeVoisinagesDisjoints}, $\bar{\alpha}(G_\mathcal{P})\leq\frac{1}{2^n}$, which leads to the theorem, following \lref{LemmeSousGraphe}.
\end{proof}

\subsection{The lattice $D_n$, $n\geq 4$}

We apply the same method as for $A_n$ to another classical family of lattices. For $n\geq 4$, the lattice $D_n$ is defined by
$$D_n=\{ x=(x_1,\ldots,x_n)\in\Z^n\mid  \sum_{i=0}^n x_i = 0 \mod 2\}.$$ 

The same construction provides again a graph that satisfies $\eqref{Property D}$. Unfortunately, the analysis of the neighborhoods of the cliques does not lead to the wanted $\frac{1}{2^n}$ upper bound. Nevertheless, we can prove: 

\begin{theo}\label{DnTheo}
For every dimension $n\geq 4$, if $\mathcal{P}$ is the Vorono\"\i\  region of the lattice $D_n$, then
$$m_1(\R^n,\Vert   \cdot\Vert  _\mathcal{P})\leq \frac{1}{(3/{4})2^n+n-1}.$$
\end{theo}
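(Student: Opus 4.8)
The plan is to mirror the strategy used for $A_n$, building the induced subgraph $G_\mathcal{P}$ of $G(\R^n,\Vert\cdot\Vert_\mathcal{P})$ on the lattice $\tfrac12 D_n^\#$, together with the auxiliary Cayley graph $\tilde G$ on the same vertex set generated by $\tfrac12 V_\mathcal{P}$, where $V_\mathcal{P}$ is the set of vertices of the Voronoï region of $D_n$. The first step is to recall the explicit description of $\mathcal{P}$, its vertices, and the norm $\Vert\cdot\Vert_\mathcal{P}$ from \cite{Conway:1987:SLG:39091}: the vertices of $\mathcal{P}$ come in two orbits under $\mathfrak S_n\ltimes\{\pm1\}^n$, the ``deep holes'' of the form $(\pm\tfrac12,\dots,\pm\tfrac12)$ and the ``shallow'' vertices like $(\pm1,0,\dots,0)$. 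One then checks, exactly as in \lref{DP1DG2An}, that $\tilde G$ satisfies \eqref{Property D}: if $v,v'\in V_\mathcal{P}$ and $\tfrac{v+v'}2\notin\tfrac12V_\mathcal{P}\cup\{0\}$ then $\Vert v+v'\Vert_\mathcal{P}=2$. This is a finite case analysis over the sums of two vertices from the two orbits. Consequently \lref{LemmeVoisinagesDisjoints} applies: any $U\subset V$ avoiding polytope distance $1$ decomposes into cliques of $\tilde G$ with pairwise disjoint closed neighborhoods, so $\bar\alpha(G_\mathcal{P})\le\sup_C\delta^0(C)$.

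The heart of the argument is then a sharp bound on $\delta^0(C)=|C|/|N[C]|$ over all cliques $C$ of $\tilde G$. Since $\tilde G$ is vertex-transitive I would assume $0\in C$, enumerate the cliques containing $0$ (a chain-type condition on the supports, analogous to the $A_n$ case but complicated by the presence of both vertex orbits and of sign choices), and for each clique count the new neighbors contributed by each generator $\tfrac{v_i}2$ beyond those already forced by $0$. The target inequality is
\begin{equation*}
\delta^0(C)\le\frac{1}{(3/4)2^n+n-1}
\end{equation*}
for every clique $C$; since a clique has at least one element, it suffices to show $|N[C]|\ge\bigl((3/4)2^n+n-1\bigr)|C|$, and the extremal configuration should be the largest clique, for which $|C|$ and $|N[C]|$ can be computed exactly. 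Finally, combining $\bar\alpha(G_\mathcal{P})\le\sup_C\delta^0(C)\le\bigl((3/4)2^n+n-1\bigr)^{-1}$ with \lref{LemmeSousGraphe} yields the theorem. (Note that, unlike the $A_n$ case, the lattice $\tfrac12 D_n^\#$ is all that is needed as the vertex set — $D_n^\#$ already contains the vertices of $\mathcal P$ since these are the minimal vectors/glue vectors of $D_n^\#$ — so no auxiliary $A,B$ splitting as in the irregular hexagon is required.)

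\textbf{Main obstacle.} The hard part will be the clique enumeration and the neighborhood count. For $A_n$ the cliques were chains of nested $\{0,1\}$-supports and the counting telescoped cleanly; for $D_n$ the vertex set of $\mathcal P$ has two combinatorially different types and the deep-hole vertices $(\pm\tfrac12,\dots,\pm\tfrac12)$ interact with sign patterns, so the adjacency condition $\tfrac{v-v'}2\in\tfrac12 V_\mathcal{P}$ splits into several subcases and the resulting ``new neighbors'' formula is messier. The reason the method only yields $\bigl((3/4)2^n+n-1\bigr)^{-1}$ rather than $2^{-n}$ is precisely that the worst clique does not reach local density $2^{-n}$: one must identify which clique maximizes $\delta^0$ and show that even its closed neighborhood is as small as $(3/4)2^n+n-1$ times its size — no smaller — so the bound is the genuine output of this discretization, not an artifact of a crude estimate. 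I would therefore expect the bulk of the work to be a careful, somewhat tedious but elementary, verification that every clique satisfies $|N[C]|\ge\bigl((3/4)2^n+n-1\bigr)|C|$, with the inequality tight for one explicit maximal clique.
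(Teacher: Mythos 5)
Your plan coincides with the paper's own proof: it uses the same vertex set $\tfrac12 D_n^\#$, the same auxiliary Cayley graph generated by $\tfrac12 V_\mathcal{P}$, a verification of \eqref{Property D} by case analysis on the two vertex types, and then a bound on the local densities $\delta^0(C)$ of the cliques, with the bound attained exactly by the maximal clique, so that combining with Lemmas \ref{LemmeVoisinagesDisjoints} and \ref{LemmeSousGraphe} gives the stated inequality. The one point where the execution is simpler than you anticipate is the clique enumeration: up to symmetry every clique of $\tilde G$ containing $0$ is a subset of the single four-element clique $\left\{0,\tfrac{v_1}{2},\tfrac{v_2}{2},\tfrac{v_3}{2}\right\}$ with $v_1=(0,\dots,0,1)$, $v_2=\left(\tfrac12,\dots,\tfrac12\right)$, $v_3=\left(-\tfrac12,\tfrac12,\dots,\tfrac12\right)$, so only a handful of explicit neighborhood counts are required, the largest clique giving $\delta^0=\frac{1}{(3/4)2^n+n-1}$.
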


Let us describe the Vorono\"\i\  region of $D_n$. Again we refer to \cite{Conway:1987:SLG:39091} for further details. Let $S$ be the simplex whose vertices are $0$, $(0,\ldots,0,1)$, $\left(\frac{1}{2}, \ldots, \frac{1}{2}\right)$, and the vectors $\left(\underbrace{0,\ldots,0}_{i},\frac{1}{2}, \ldots, \frac{1}{2}\right)$, for $2\leq i\leq n-2$. Then  the Vorono\"\i\  region $\mathcal{P}$ of $D_n$ is the union of the images of $S$ by the group generated by all permutations of the coordinates and sign changes of evenly many coordinates. 
Note that some of the vertices of $S$ are not extreme points of $\mathcal{P}$ anymore. Actually, there are two types of vertices of $\mathcal{P}$:
$$
\begin{cases}
2n \text{ vectors of the form } (\pm 1, 0,\ldots,0) &\text{(type 1),}  \\
2^n \text{ vectors of the form } \left(\pm \frac{1}{2},\ldots,\pm \frac{1}{2}\right) & \text{(type 2).}
\end{cases}
$$

The non zero vectors of $S$ are contained in the hyperplane of $\R^n$ defined by the equation $x_{n-1}+x_n=1$. The faces of $\mathcal{P}$ are supported by the images of this hyperplane under the action of the group \textit{i.e.} the hyperplanes defined by the equations of the form $\pm x_i \pm x_j =1$, with $i\neq j$. Thus,
$$\begin{cases} & x\in \mathcal{P} \text{ if and only if for all } i\neq j,\  |x_i|+|x_j|\leq 1 \\ &  x\in \partial \mathcal{P} \text{ if and only if  } \max_{i\neq j}(|x_i|+|x_j|)=1, \end{cases}$$
and the norm $\Vert x\Vert_\mathcal{P}$ of a vector $x\in \R^n$ is 
$$ \Vert  x\Vert  _\mathcal{P} = \max_{i\neq j}(|x_i|+|x_j|) .$$

As in the case of $A_n$, the vertices of $\mathcal{P}$ span the dual lattice of $D_n$:

\begin{lemm}
The vertices of $\mathcal{P}$ span over $\Z$ the dual lattice  $ D_n^\# $.
\end{lemm}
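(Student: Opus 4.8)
The plan is to follow the same two-step scheme as for the lemma on $A_n$ proved above. First I would recall the classical description of the dual lattice, namely
$D_n^\# = \Z^n + \Z v$ with $v=\left(\frac12,\ldots,\frac12\right)$ (see \cite{Conway:1987:SLG:39091}). This is a routine verification: the map $\Z^n\to\Z/2\Z$, $x\mapsto \sum_i x_i \bmod 2$, is surjective with kernel $D_n$, so $[\Z^n:D_n]=2$; passing to duals, $D_n^\#$ contains $(\Z^n)^\#=\Z^n$ with index $2$; and since $\langle v,x\rangle=\frac12\sum_i x_i\in\Z$ for every $x\in D_n$ while $v\notin\Z^n$ and $2v=(1,\ldots,1)\in\Z^n$, we get $D_n^\# = \Z^n\cup(v+\Z^n)=\Z^n+\Z v$.

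Next I would check the inclusion $\Span_\Z(V_{\mathcal P})\subseteq D_n^\#$. The type 1 vertices, being permutations of $(\pm1,0,\ldots,0)$, lie in $\Z^n\subset D_n^\#$. Each type 2 vertex $\left(\pm\frac12,\ldots,\pm\frac12\right)$ differs from $v$ by a vector all of whose coordinates are $0$ or $-1$, hence lies in $v+\Z^n\subset D_n^\#$. Since $D_n^\#$ is a group, it contains the $\Z$-span of all vertices of $\mathcal P$.

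For the reverse inclusion $D_n^\#\subseteq\Span_\Z(V_{\mathcal P})$, I would observe that the type 1 vertices include all $\pm e_i$ for $1\le i\le n$, so $\Span_\Z(V_{\mathcal P})\supseteq\Z^n$; moreover $v=\left(\frac12,\ldots,\frac12\right)$ is itself one of the type 2 vertices. Hence $\Span_\Z(V_{\mathcal P})\supseteq\Z^n+\Z v=D_n^\#$, which finishes the proof. There is no real obstacle here; the only point requiring a little care is the description of $D_n^\#$, which is standard, and the argument is entirely parallel to the $A_n$ case.
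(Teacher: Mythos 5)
Your proof is correct and follows essentially the same route as the paper: both inclusions are obtained from an explicit coset description of $D_n^\#$, with the type-1 vertices $\pm e_i$ generating $\Z^n$ and the half-integer vertex supplying the remaining part. The only cosmetic difference is that you work with the index-two decomposition $D_n^\#=\Z^n\cup\left(v+\Z^n\right)$, justified by a duality/index argument, whereas the paper quotes the decomposition of $D_n^\#$ into four cosets of $D_n$ and checks the forward inclusion by directly verifying $\langle x,v\rangle\in\Z$ for all $x\in D_n$ and $v\in V_{\mathcal P}$.
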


\begin{proof}
It is immediate to check that for every $x\in D_n$ and $v\in V_\mathcal{P}$, $\langle x,v \rangle \in \Z$, so $\Span_\Z(V_\mathcal{P})\subset D_n^\#$. The converse follows directly from the following decomposition of $D_n^\#$:
$$ D_n^\# = D_n \cup \left(\left(\frac{1}{2},\ldots,\frac{1}{2}\right)+D_n\right) \cup \left(\left(\frac{1}{2},\ldots,-\frac{1}{2}\right)+D_n\right) \cup ((0,\ldots,0,1)+D_n).$$
\end{proof}

Once again, let $G_\mathcal{P}$ be the subgraph of $ G(\R^n,\Vert \cdot \Vert_\mathcal{P})$ induced by $V=\frac{1}{2} D_n^\#$, and let $\tilde{G}$ be the  auxiliary graph which is the  Cayley graph on $V$
associated with the generating set $\frac{1}{2}V_\mathcal{P}$. It also satisfies \eqref{Property D}:

\begin{lemm}\label{DP1DG2Dn}
The graph $\tilde{G}$ satisfies \eqref{Property D}.
\end{lemm}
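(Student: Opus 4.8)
The plan is to mimic the proof of \lref{DP1DG2An} almost verbatim, replacing the combinatorics of $\{0,1\}^{n+1}$ by the explicit description of the vertices of the Voronoï region of $D_n$ together with the norm formula $\Vert x\Vert_{\mathcal P}=\max_{i\neq j}(|x_i|+|x_j|)$. Since $\tilde G$ is a Cayley graph it is vertex-transitive, so I would assume $u_1=0$ and write $u_2=\frac{v+v'}{2}$ for some $v,v'\in V_{\mathcal P}$ coming from a length-$2$ walk $0\to\frac v2\to\frac{v+v'}{2}$. It then suffices to establish that one of the three alternatives
\[
v+v'=0,\qquad v+v'\in V_{\mathcal P},\qquad \Vert v+v'\Vert_{\mathcal P}=2
\]
always occurs: the first two force $\tilde d(0,u_2)\le 1$ and are thus excluded when $\tilde d(u_1,u_2)=2$, while the third says precisely $\Vert u_1-u_2\Vert_{\mathcal P}=1$.

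The verification then splits according to the types of $v$ and $v'$ among the $2n$ type-$1$ vertices $\pm e_i$ and the $2^n$ type-$2$ vertices $(\pm\frac12,\dots,\pm\frac12)$. If both are of type $1$, then $v+v'$ is $0$, or $\pm 2e_i$, or $\pm e_i\pm e_j$ with $i\neq j$; in the last two cases it is not a vertex and has $\mathcal P$-norm $2$. If $v=\pm e_i$ is of type $1$ and $v'$ is of type $2$, then every coordinate of $v+v'$ is $\pm\frac12$ except the $i$-th, which is $\pm\frac12$ or $\pm\frac32$: in the former case $v+v'$ is again a type-$2$ vertex, in the latter $\Vert v+v'\Vert_{\mathcal P}=\frac32+\frac12=2$ and $v+v'$ is not a vertex (here one only uses $n\ge 2$, so there is a second coordinate). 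Finally, if both are of type $2$, then $v+v'\in\{-1,0,1\}^n$; it equals $0$ when all coordinates vanish, it is a type-$1$ vertex $\pm e_k$ when exactly one coordinate is nonzero, and otherwise it has at least two coordinates equal to $\pm 1$, hence $\mathcal P$-norm $2$, and is not a vertex. Since these cases are exhaustive, $\tilde G$ satisfies \eqref{Property D}.

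I do not expect a genuine obstacle here: the structure is identical to the $A_n$ case and the role of $D_n$ (rather than $\Z^n$) enters only through the two vertex families and the $\ell_\infty$-of-pairs norm. The one point that deserves a little care is the mixed type-$1$/type-$2$ case, where the added $\pm e_i$ may or may not be absorbed by the half-integer coordinate it hits, and one must check that the two possible outcomes match up exactly with "$v+v'$ is a vertex" versus "$v+v'$ lies on $\partial(2\mathcal P)$"; once that bookkeeping is done, the lemma follows.
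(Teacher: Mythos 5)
Your proposal is correct and follows essentially the same route as the paper: reduce by vertex-transitivity to $u_1=0$, write $u_2=\frac{v+v'}{2}$, and check via the same three-way case split on the types of $v,v'$ that $v+v'$ is either $0$, a vertex of $\mathcal{P}$, or of $\mathcal{P}$-norm $2$. The case analysis, including the mixed type-$1$/type-$2$ bookkeeping with the $\pm\frac{3}{2}$ coordinate, matches the paper's argument.
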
 

\begin{proof}
We follow the proof of \lref{DP1DG2An}. Let $v,v'\in V_\mathcal{P}$. We distinguish three cases depending on the type of $v$ and $v'$:
\begin{itemize}
\item If both $v$ and $v'$ are of type $1$, $v+v'$ is either $0$, or, up to permutation of the coordinates, of the form $(\pm 2,0,\ldots,0)$ or $(\pm 1, \pm 1, 0,\ldots,0)$, and ${\Vert  v+v'\Vert_\mathcal{P}  =2}$.
\item If both $v$ and $v'$ are of type $2$, the non zero coordinates of $v+v'$ are $1$ or $-1$. If $v+v'\neq 0$, then either it is a vertex of $V_\mathcal{P}$ of type $1$, or it has at least two coordinates whose absolute values are equal to $1$, and so $\Vert  v+v'\Vert_\mathcal{P}  =2$.
\item If $v$ is of type $1$ and $v'$ is of type $2$, then $v+v'$ is either a vertex of $V_\mathcal{P}$ of type $2$, or, up to a permutation of coordinates, of the form $\left(\pm \frac{3}{2},\pm \frac{1}{2},\ldots,\pm \frac{1}{2}\right)$, and $\Vert  v+v'\Vert_\mathcal{P}  =2$.
\end{itemize} 
\end{proof}

It remains to analyze the neighborhoods of the cliques of $\tilde{G}$. We first determine the possible cliques of $\tilde{G}$. We may assume that they contain $0$.

\begin{lemm} Up to symmetry, a clique of $\tilde{G}$ containing $0$ must be a subset of the maximal clique 
$$ C_{\max} =\left\{ 0, \frac{v_1}{2}, \frac{v_2}{2}, \frac{v_3}{2} \right\} \ \text{where } \begin{cases} v_1=(0,\ldots,0,1) \\ v_2=\left(\frac{1}{2},\frac{1}{2},\ldots,\frac{1}{2}\right) \\ v_3=\left(-\frac{1}{2},\frac{1}{2},\ldots,\frac{1}{2}\right)  \end{cases}.$$
\end{lemm}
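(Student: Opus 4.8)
The plan is to mirror the clique analysis carried out for $A_n$. A clique of $\tilde{G}$ containing $0$ is exactly a set of the form $\{0\}\cup\{\frac{v}{2}\mid v\in T\}$ with $T\subseteq V_\mathcal{P}$ such that $v-v'\in V_\mathcal{P}$ for every pair $v,v'\in T$; since $V_\mathcal{P}=-V_\mathcal{P}$, the difference $v-v'$ runs through exactly the shapes of the sums $v+v'$ already examined in the proof of \lref{DP1DG2Dn}. So first I would record, for distinct $v,v'\in V_\mathcal{P}$, precisely when $v-v'\in V_\mathcal{P}$: if both are of type $1$, then $v-v'$ has, up to a permutation, the form $(\pm2,0,\dots,0)$ or $(\pm1,\pm1,0,\dots,0)$, so $\Vert v-v'\Vert_\mathcal{P}=2$ and $v-v'\notin V_\mathcal{P}$; if both are of type $2$, then $v-v'$ has entries in $\{-1,0,1\}$ and belongs to $V_\mathcal{P}$ precisely when it has a single nonzero coordinate, i.e. when $v$ and $v'$ differ in exactly one coordinate; and if $v$ is of type $1$ and $v'$ of type $2$, then $v-v'$ has, up to a permutation, the form $(\pm\frac{3}{2},\pm\frac{1}{2},\dots,\pm\frac{1}{2})$ or $(\pm\frac{1}{2},\dots,\pm\frac{1}{2})$, and it lies in $V_\mathcal{P}$ (necessarily as a type-$2$ vertex, since it has at least $n-1\ge 3$ nonzero entries) exactly when $v'$ carries the same sign as $v$ in the nonzero coordinate of $v$, so that no $\pm\frac{3}{2}$ occurs.

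From the first case, $T$ contains at most one vector of type $1$. From the second case, the type-$2$ vectors of $T$ are pairwise at Hamming distance $1$ inside $\{\pm\frac{1}{2}\}^n$, and three such vectors cannot coexist: their three pairwise differences would each have exactly one nonzero entry, yet one of them is the sum of the other two, which is impossible, as the support would then have size $2$ or an entry of absolute value $2$ would appear. Hence $T$ has at most two vectors of type $2$, so $|T|\le 3$ and every clique of $\tilde{G}$ has at most four vertices, matching $|C_{\max}|$.

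It then remains to normalize $T$ by a symmetry of $\tilde{G}$ fixing $0$, i.e. a coordinate permutation composed with a sign change of an even number of coordinates. If $T$ contains a type-$1$ vector, permute so that its nonzero coordinate is the last one and, if that coordinate equals $-1$, flip it together with a vacuously zero coordinate, to obtain $v_1=(0,\dots,0,1)$; the third case above then forces every type-$2$ member of $T$ to have $+\frac{1}{2}$ in the last coordinate, and the remaining freedom (permutations of the first $n-1$ coordinates together with even sign changes among them) suffices to send one such vector to $v_2=(\frac{1}{2},\dots,\frac{1}{2})$ and, if a second one is present, the other to $v_3=(-\frac{1}{2},\frac{1}{2},\dots,\frac{1}{2})$ — which of the two goes where being dictated by the parity of its number of negative entries. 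If $T$ has no type-$1$ vector, the same manipulation sends its at most two type-$2$ vectors into $\{v_2,v_3\}$. In every case $\{0\}\cup\frac{1}{2}T\subseteq C_{\max}$ up to symmetry, and a one-line check that $v_1-v_2$, $v_1-v_3$ (both of type $2$) and $v_2-v_3$ (of type $1$) lie in $V_\mathcal{P}$ confirms that $C_{\max}$ is itself a clique, hence a maximal one.

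The main obstacle is the symmetry bookkeeping rather than anything conceptual: one must stay within the allowed group of \emph{even} sign changes throughout. Turning $-1$ into $1$ in a type-$1$ vector, and normalizing a type-$2$ vector to $v_2$ versus $v_3$, both hinge on pairing sign flips correctly, and once the type-$1$ vector is pinned down one is left only with even sign changes on the first $n-1$ coordinates — which is exactly enough, the lingering parity obstruction being absorbed by the choice between $v_2$ and $v_3$.
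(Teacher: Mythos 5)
Your proof is correct and takes essentially the same route as the paper: the paper's own argument consists precisely of your three-case analysis of when $v-v'\in V_\mathcal{P}$ according to the types of $v$ and $v'$ (no two type-$1$ vertices; type-$2$ vertices differing in exactly one coordinate; matching sign when a type-$1$ and a type-$2$ vertex coexist). The additional bookkeeping you supply --- the bound of two type-$2$ vectors and the explicit normalization under permutations and even sign changes --- simply fills in what the paper leaves implicit after stating those three facts.
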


\begin{proof}
Let $v,v'\in V_\mathcal{P}$ such that $\frac{v-v'}{2}\in \frac{1}{2}V_\mathcal{P}$. The conclusion follows from 
the following facts:
\begin{itemize}
\item Both $v$ and $v'$ can not be of type $1$, because the difference of two such vectors, is either $0$ or has polytope norm $2$.
\item If $v$ and $v'$ are both of type $2$, then $v$ and $v'$ must differ by only one coordinate, otherwise $\Vert  v-v'\Vert  _\mathcal{P}=2$.
\item If $v$ is of type $1$, say $v=(0,\cdots,0,\underbrace{\pm 1}_i,0\cdots,0)$, if $v'$ is of type $2$ and $\frac{v-v'}{2}\in \frac{1}{2}V_\mathcal{P}$, then the $i$th coordinate of $v'$ must have the same sign as the $i$th coordinate of $v$. 
\end{itemize}
\end{proof}

Then, we analyze  the local density of the  cliques:

\begin{lemm}\label{DnDelta*}
For every clique of $\tilde{G}$,
$$\delta^0(C)\leq \frac 1 {{(3/{4})2^n+n-1}}.$$
\end{lemm}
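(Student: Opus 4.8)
The plan is to mimic the proof of \lref{AnDelta*}: for each clique $C$ of $\tilde G$ containing $0$ (every clique is a translate of one of these by vertex-transitivity), we write $N[C]=\frac12\bigl((\{0\}\cup\{v : v/2\in C\})+V_\mathcal{P}\bigr)$, count the vertices of $N[C]$ from below, and check that $|C|/|N[C]|\le\bigl((3/4)2^n+n-1\bigr)^{-1}$. Since the previous lemma tells us every clique through $0$ is a subset of $C_{\max}=\{0,v_1/2,v_2/2,v_3/2\}$, there are only finitely many combinatorial cases — the subsets of a $4$-element set, up to the residual symmetry — so the proof will be a finite case check. The denominator $(3/4)2^n+n-1$ strongly suggests that the decisive case is $C=\{0\}$ itself: then $N[C]=\frac12(\{0\}\cup V_\mathcal{P})$ has exactly $1+2n+2^n$ elements (the center, the $2n$ type-1 vertices, the $2^n$ type-2 vertices), and $\tfrac{1}{1+2n+2^n}$ is \emph{not} obviously $\le\tfrac{1}{(3/4)2^n+n-1}$. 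Indeed $(3/4)2^n+n-1\le 2^n+2n+1$ holds for all $n\ge1$ (it rearranges to $0\le(1/4)2^n+n+2$), so the singleton case is fine and in fact gives the weakest bound, which is why that quantity appears in the statement.

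First I would handle $C=\{0\}$: compute $|N[\{0\}]|=1+2n+2^n$ exactly and observe $1+2n+2^n\ge(3/4)2^n+n-1$, giving $\delta^0(\{0\})\le\bigl((3/4)2^n+n-1\bigr)^{-1}$. Then I would run through the larger cliques $C$ with $|C|=s+1\in\{2,3,4\}$. For each, $N[C]$ contains $\frac12(\{0\}\cup V_\mathcal{P})$ (giving the $1+2n+2^n$ vertices above) plus, for each nonzero $v_k/2\in C$, the genuinely new translates in $\frac12(v_k+V_\mathcal{P})$. As in the $A_n$ computation, I would bound the number of new neighbors contributed by $v_k/2$ from below and show that each added clique-vertex increases $|N[C]|$ by enough — concretely, that $|N[C]|\ge (s+1)\cdot\bigl((3/4)2^n+n-1\bigr)$ — so that the ratio $(s+1)/|N[C]|$ stays below the target. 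The arithmetic here is routine: adding $v_1/2=(0,\dots,0,1)/2$ brings in the type-2 translates $\frac12(v_1+\{\text{type 2}\})$ that were previously absent, and adding $v_2/2,v_3/2$ brings in further type-1 and type-2 images; one tracks overlaps exactly as in \lref{AnDelta*}.

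The main obstacle I anticipate is not conceptual but bookkeeping: for $D_n$ the vertex set of $\mathcal{P}$ has two distinct types, and $V_\mathcal{P}+v_k$ can produce vectors of norm $2$ (hence outside the relevant shell) or vectors that coincide with images already counted, so the inclusion–exclusion for $|N[C]|$ is messier than in the $A_n$ case where everything was governed by nested supports. I would organize it by: (i) fixing the representative clique in coordinates using the symmetry group (permutations and even sign changes), (ii) for each candidate $C\subseteq C_{\max}$ writing $N[C]$ as a union over the at most four translates $\frac12(v+V_\mathcal{P})$ with $v\in\{0,v_1,v_2,v_3\}$ restricted to $C$, and (iii) counting $\bigl|\bigcup \frac12(v+V_\mathcal{P})\bigr|$ by classifying each image point by type and by which translates it lies in. Once the counts $|N[C]|$ are in hand for the handful of cliques, comparing $(s+1)/|N[C]|$ with $\bigl((3/4)2^n+n-1\bigr)^{-1}$ is a one-line inequality in $n$ in each case, and the worst case — the one that is tight, matching the $3/4$ in the statement — will be a specific small clique (plausibly $C_{\max}$ itself or the $2$-element clique $\{0,v_1/2\}$), which pins down the constant.
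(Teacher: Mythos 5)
Your overall plan coincides with the paper's proof: since every clique through $0$ is, up to symmetry, a subset of $C_{\max}=\left\{0,\frac{v_1}{2},\frac{v_2}{2},\frac{v_3}{2}\right\}$, the lemma is a finite check in which one counts $|N[C]|$ exactly for each subclique and compares $|C|/|N[C]|$ with the stated constant. The paper records precisely these values: $\delta^0(\{0,\frac{v_1}{2}\})=\frac{1}{(3/4)2^n+2n}$, $\delta^0(\{0,\frac{v_2}{2}\})=\frac{1}{2^n+n}$, $\delta^0=\frac{1}{(5/6)2^n+n-1/3}$ for the three-element cliques, and $\delta^0(C_{\max})=\frac{4}{3\cdot 2^n+4n-4}=\frac{1}{(3/4)2^n+n-1}$. (Your count $|N[\{0\}]|=1+2n+2^n$ is right; the paper's displayed $\frac{1}{1+2^n+n}$ for the singleton looks like a typo, and in any case that clique is far from extremal.)

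The genuine flaw in your write-up is the identification of the decisive case, about which you also contradict yourself. The singleton $C=\{0\}$ has the \emph{smallest} local density of all these cliques, so it neither pins down nor explains the constant; the constant in the statement is attained, with equality, by $C_{\max}$, where $|N[C_{\max}]|=3\cdot 2^n+4n-4=4\bigl((3/4)2^n+n-1\bigr)$. Your alternative guess $\{0,\frac{v_1}{2}\}$ is also not tight: its denominator is $(3/4)2^n+2n$, not $(3/4)2^n+n-1$. This matters for how the ``routine bookkeeping'' must be done: because equality holds at $C_{\max}$, there is zero slack in the extremal case, so the strategy of bounding from below the number of new neighbours contributed by each added vertex cannot afford any undercounting of overlaps --- one must carry out the inclusion--exclusion exactly (for instance, for $\{0,\frac{v_1}{2}\}$ the two translates of $\{0\}\cup\frac{1}{2}V_\mathcal{P}$ meet in exactly $2+2^{n-1}$ points, giving $|N[C]|=(3/2)2^n+4n$, and similarly for the larger cliques). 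Your target inequality $|N[C]|\geq |C|\bigl((3/4)2^n+n-1\bigr)$ is the right one and does hold for every subclique of $C_{\max}$, so the plan succeeds once these exact counts --- which are the actual content of the lemma and which you leave undone --- are supplied.
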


\begin{proof}
By enumerating the neighbors of every element in $C_{\max}$ and by counting the intersections of the different neighborhoods, we find that:

\begin{itemize}
\item If $C=\{0\}$, 
$\delta^0(C)=\frac{1}{1+2^n+n}$.
\item If $C=\left\{0,\frac {v_1} 2\right\}$, 
$$ \delta^0(C)=\frac{2}{2^n + 2^{n-1} +4n}=\frac{1}{(3/4)2^{n} +2n}. $$
Note that for $n\geq 6$, this density is already greater than $\frac{1}{2^n}$.
\item If $C$ is one of the two symmetric cliques $\left\{0,\frac{v_2} 2\right\}$ and $\left\{0,\frac{v_3} 2 \right\}$,
$$ \delta^0(C)=\frac{2}{ 2\times2^n + 2n}= \frac{1}{ 2^n + n}. $$
\item By symmetry, the cliques of the form $\left\{0,\frac{v_i} 2,\frac{v_j} 2\right\}$  have the same number of neighbors. If $C$ is one of them, 
$$ \delta^0(C)=\frac{3}{2\times 2^n + 2^{n-1} + 3n -1 } = \frac{1}{(5/6)2^n + n - 1/3 } ,$$

which is also greater than $\frac{1}{2^n}$.

\item Finally, 
$$ \delta^0(C_{\max})=\frac{4}{3\times 2^n + 4n - 4} = \frac{1}{{(3/{4})2^n+n-1}}, $$
which is the highest possible value of $\delta^0(C)$.
\end{itemize} 
\end{proof}

\section{The Chromatic number of $G(\R^n,\Vert\cdot\Vert_{\mathcal P})$}\label{SecChi}
In this section, we discuss the chromatic number 
$\chi(\R^n,\Vert \cdot \Vert_\mathcal{P})$ of the unit distance graph associated with a parallelohedron.
We start with the  construction of  a natural coloring of $\R^n$ with $2^n$ colors, leading to:
\begin{prop}
Let $\mathcal{P}$ be a parallelohedron in $\R^n$. Then 
$$ \chi(\R^n,\Vert \cdot \Vert_\mathcal{P})\leq 2^n. $$
\end{prop}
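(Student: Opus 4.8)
The plan is to construct an explicit $2^n$-coloring of $\R^n$ adapted to the tiling by $\mathcal{P}$. Since $\mathcal{P}$ is a parallelohedron, it tiles $\R^n$ face-to-face by translation along a lattice $\Lambda$, so $\R^n = \bigcup_{\lambda \in \Lambda} (\lambda + \mathcal{P})$ with disjoint interiors. The key observation is that the norm $\Vert \cdot \Vert_\mathcal{P}$ has unit ball $\mathcal{P}$, so if $\Vert x - y \Vert_\mathcal{P} = 1$ then $x-y \in \partial\mathcal{P} \subset \mathcal{P}$; in particular $x$ and $y$ cannot lie in the interior of the same translate $\lambda + \mathcal{P}$, nor in translates by $\frac{1}{2}\Lambda$-cosets that are "too close". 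The natural coloring is to color a point $x$ according to the coset of $2\Lambda$ containing the unique (up to the measure-zero boundary) translate of $\mathcal{P}$ by $\Lambda$ in which $x$ lies: writing $x \in \lambda + \mathcal{P}$ with $\lambda \in \Lambda$, assign to $x$ the class $\lambda + 2\Lambda \in \Lambda/2\Lambda$. Since $|\Lambda / 2\Lambda| = 2^n$, this uses exactly $2^n$ colors. One must handle the boundary points of the tiles, where the tile containing $x$ is not unique; because the tiling is face-to-face one can break ties consistently (e.g. by a fixed rule choosing among the finitely many translates containing $x$), and the argument below only needs that the color of $x$ is the $2\Lambda$-class of \emph{some} translate containing $x$.

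**The main step** is to verify that this is a proper coloring, i.e. that two points at polytope distance $1$ receive different colors. Suppose $\Vert x - y\Vert_\mathcal{P} = 1$ and, for contradiction, that $x$ and $y$ get the same color. Pick translates $\lambda + \mathcal{P} \ni x$ and $\mu + \mathcal{P} \ni y$ with $\lambda - \mu \in 2\Lambda$, say $\lambda - \mu = 2\nu$ with $\nu \in \Lambda$. Then $x - \lambda \in \mathcal{P}$, $y - \mu \in \mathcal{P}$, and $(x-\lambda) - (y - \mu) = (x - y) - 2\nu$. Since $x-y \in \partial\mathcal{P}$ we get $(x-\lambda)-(y-\mu) + 2\nu \in \partial \mathcal{P}$, i.e. the vector $w := (x - \lambda) - (y-\mu)$ satisfies $w + 2\nu \in \partial\mathcal{P} \subset \mathcal{P}$, while $w$ is a difference of two elements of $\mathcal{P}$, hence $w \in 2\mathcal{P}$ (as $\mathcal{P}$ is convex and symmetric, $\mathcal{P} - \mathcal{P} = 2\mathcal{P}$), so $\Vert w \Vert_\mathcal{P} \le 2$. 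The crux is that $2\nu \in 2\Lambda$ with $\Vert w + 2\nu\Vert_\mathcal{P} = 1$ and $\Vert w \Vert_\mathcal{P} \le 2$ force $\nu = 0$: dividing by $2$, the point $\frac{w}{2} + \nu$ lies in $\frac12\partial\mathcal{P}$, hence in the interior of the tile $\mathcal{P}$ scaled — more precisely $\Vert \tfrac{w}{2}+\nu\Vert_\mathcal{P} = \tfrac12 < 1$, so $\tfrac{w}{2} + \nu \in \mathring{\mathcal{P}}$, while $\tfrac{w}{2} \in \mathcal{P}$; since the translates $\mathring{\mathcal{P}}$ and $\nu + \mathring{\mathcal{P}}$ are disjoint for $\nu \ne 0$ in $\Lambda$, and $\tfrac{w}{2}$ witnesses that $\tfrac{w}{2} \in \mathcal{P}$ and $\tfrac{w}{2} \in \nu + \mathcal{P}$... one deduces $\nu = 0$ from the fact that a point of norm $<1$ lies in the open unit ball which meets only one tile of the $\Lambda$-tiling through the origin — but the lattice here is $\Lambda$, not the tiling lattice of $\frac12\mathcal P$; so instead I argue directly: $\nu \neq 0$ implies $\Vert \nu \Vert_\mathcal{P} \ge$ (the minimum over nonzero $\Lambda$ of $\Vert\cdot\Vert_\mathcal{P}$), which equals $2$ for the tiling lattice since the closed tiles $\lambda + \mathcal{P}$ and $\mathcal{P}$ meet only in a common face and a shortest tiling vector has $\mathcal{P}$-length exactly $2$; then $\Vert w + 2\nu\Vert_\mathcal{P} \ge 2\Vert\nu\Vert_\mathcal{P} - \Vert w\Vert_\mathcal{P} \ge 4 - 2 = 2 > 1$, contradiction. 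Hence $\nu = 0$, $\lambda = \mu$, so $x, y$ lie in the same tile $\lambda + \mathcal{P}$ and $x - y \in \partial\mathcal{P}$ with both $x-\lambda, y-\lambda \in \mathcal{P}$, which is possible only at the boundary; a final check using the tie-breaking rule rules this out, or one absorbs it into the observation that a proper coloring is obtained after an arbitrarily small generic perturbation and colors are locally constant on interiors.

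**The main obstacle** is the careful handling of the lattice-minimum argument and the boundary points: one needs that the shortest nonzero vector of the tiling lattice $\Lambda$ has $\Vert\cdot\Vert_\mathcal{P}$-length exactly $2$ (equivalently, the facet-to-facet structure of a parallelohedron tiling forces $\tfrac12\Lambda$ to give a lattice packing of $\mathcal P$, so $\lambda\notin 2\mathring{\mathcal P}$ for $\lambda\in\Lambda\setminus\{0\}$, i.e. $\Vert\lambda\Vert_{\mathcal P}\ge 2$), and that the finitely many tiles through a boundary point can be consistently assigned. Everything else is routine: counting colors via $|\Lambda/2\Lambda| = 2^n$, and the triangle inequality estimate. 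I expect the cleanest write-up mirrors the lower-bound construction of \pref{borneinf}: the coloring is exactly "color $x$ by which translate of $\frac{1}{2}\mathcal{P}$ in the refined tiling it falls into, read modulo the pattern of period $\Lambda$", which makes both the color count and the properness transparent.
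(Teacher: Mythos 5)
There is a genuine gap, and it is not the boundary technicality you flag at the end of your ``main step'' --- it is the coloring itself. You color $x$ by the class modulo $2\Lambda$ of the center $\lambda\in\Lambda$ of the \emph{full-size} tile $\lambda+\mathcal{P}$ containing $x$. But a full tile has $\Vert\cdot\Vert_\mathcal{P}$-diameter $2$, so it contains many pairs at distance exactly $1$ in its \emph{interior}: for any $v\in\partial\mathcal{P}$ the points $\lambda+\tfrac{v}{2}$ and $\lambda-\tfrac{v}{2}$ both lie in $\lambda+\mathring{\mathcal{P}}$ and satisfy $\Vert(\lambda+\tfrac v2)-(\lambda-\tfrac v2)\Vert_\mathcal{P}=\Vert v\Vert_\mathcal{P}=1$, yet they receive the same color. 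Your chain of estimates correctly reduces the properness check to the case $\lambda=\mu$ (the lattice-minimum fact $\Vert\nu\Vert_\mathcal{P}\ge 2$ for $\nu\in\Lambda\setminus\{0\}$ and the bound $\Vert w+2\nu\Vert_\mathcal{P}\ge 2\Vert\nu\Vert_\mathcal{P}-\Vert w\Vert_\mathcal{P}$ are both fine), but the claim that this residual case ``is possible only at the boundary'' is false, and consequently no tie-breaking rule for boundary points and no small perturbation can repair the coloring: it is improper on a set of positive measure.

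The construction you mention only in your last sentence is the correct one, and it is the paper's proof: refine the tiling to the translates $\lambda+\tfrac12\mathcal{P}$ with $\lambda\in\tfrac12\Lambda$, and color by the coset of $\lambda$ in $\tfrac12\Lambda\big/\Lambda$, which has $2^n$ elements. This is \emph{not} the same as your coloring read ``modulo the pattern of period $\Lambda$'': it splits each full tile into pieces of different colors. Properness then follows exactly from the two ingredients you already have: two points of the same color either lie in the same half-tile, hence at distance at most $\tfrac12+\tfrac12=1$ with strict inequality for interior points, or lie in half-tiles whose centers differ by a nonzero $\nu\in\Lambda$ with $\Vert\nu\Vert_\mathcal{P}\ge 2$, hence at distance at least $2-\tfrac12-\tfrac12=1$, again strict for interior points; only the measure-zero boundary assignments then require a consistent convention, which is the routine part. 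In short: keep your lattice-minimum lemma and the triangle-inequality estimate, but replace the tiles $\lambda+\mathcal{P}$, $\lambda\bmod 2\Lambda$, by the half-tiles $\lambda+\tfrac12\mathcal{P}$, $\lambda\in\tfrac12\Lambda$ taken modulo $\Lambda$, as in \pref{borneinf}.
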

\begin{proof}
By assumption, there is a lattice $\Lambda$ such that $\R^n$ is the disjoint union $\cup_{\lambda \in \Lambda} (\lambda + \mathcal{P})$. We may also write $\R^n$ as the disjoint union 
$$\R^n=\bigcup_{\lambda \in \frac{1}{2}\Lambda} \left(\lambda + \frac{1}{2}\mathcal{P}\right)=\bigcup_{\lambda \in \frac{1}{2}\Lambda} B_\mathcal{P}\left(\lambda,\frac{1}{2}\right). $$
If $H$ is a coset of $\frac{1}{2} \Lambda\Big/ \Lambda$, then 
$$A_H=\bigcup_{\lambda \in H} B_\mathcal{P}\left(\lambda,\frac{1}{2}\right) $$
is a set avoiding distance $1$. So  the points in $A_H$ can receive the same color. This concludes the proof, since $\R^n$ is the disjoint union of all $A_H$ where $H$ runs through the $2^n$ cosets. 
\end{proof}

\begin{figure}[!ht]

\includegraphics[scale=0.25]{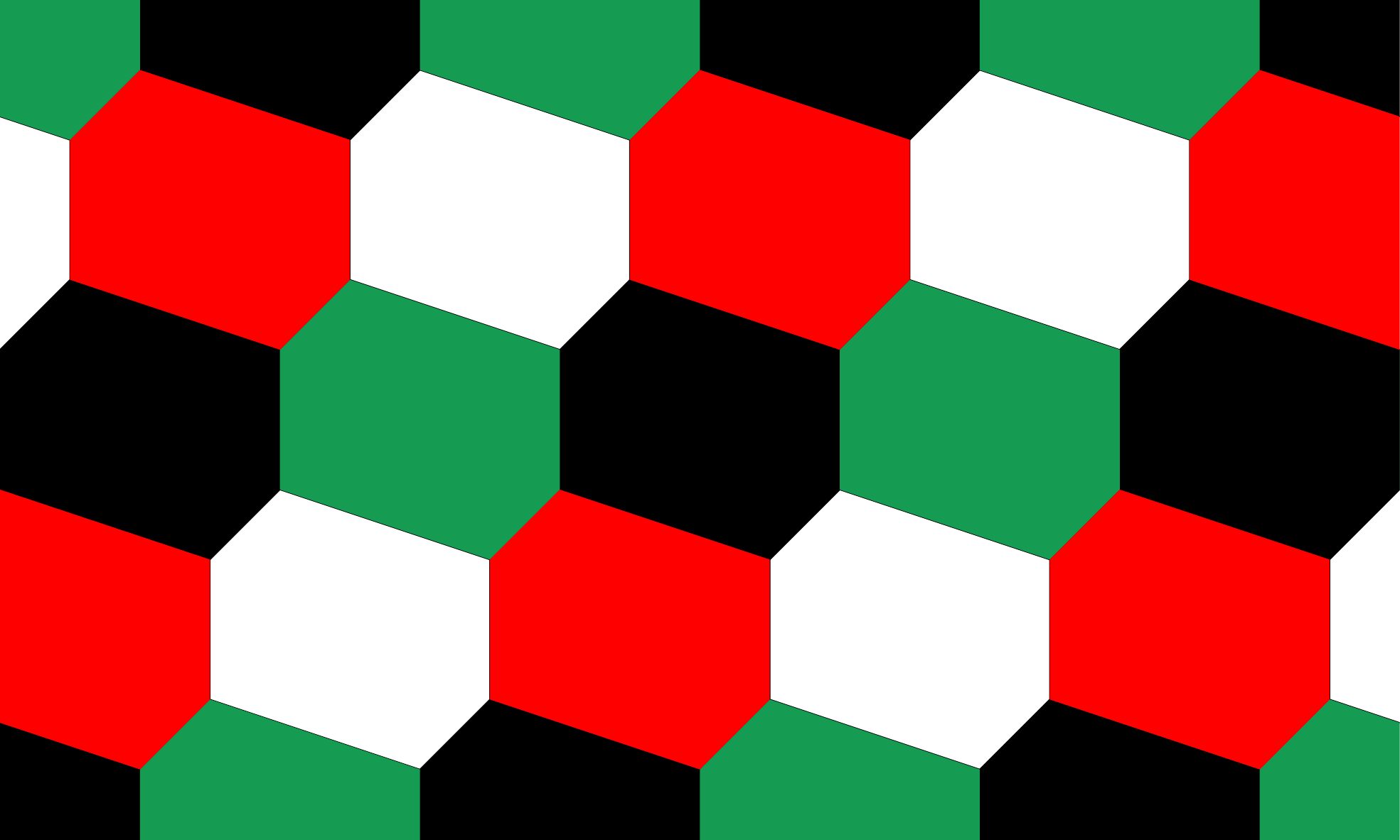}
\caption{$\chi(\R^n,\Vert \cdot \Vert_\mathcal{P})\leq 2^n $.}
\end{figure}

In order to lower bound  $\chi(\R^n,\Vert \cdot \Vert_\mathcal{P})$,  
we can take advantage of the induced subgraphs that we have constructed in previous sections.  
In particular, whenever we have a discrete induced subgraph $G_\mathcal{\mathcal{P}}$ of $G(\R^n,\Vert \cdot \Vert_\mathcal{P})$ satisfying $\bar{\alpha}(G_\mathcal{P})=\frac{1}{2^n}$, we obtain  as an immediate 
 consequence that
$$ \chi (\R^n,\Vert \cdot \Vert_\mathcal{P}) \geq \chi (G_\mathcal{P}) \geq \frac{1}{\bar{\alpha}(G_\mathcal{P})} =2^n. $$
Thus we have proved:
\begin{coro}
Let $\mathcal{P}$ be a parallelohedron in $\R^2$. Then
$$\chi (\R^2,\Vert \cdot \Vert_\mathcal{P})=4 .$$
\end{coro}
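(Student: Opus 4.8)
The plan is to combine the upper bound on $\chi$ coming from the previous proposition with a matching lower bound coming from the induced subgraphs constructed earlier in Section~\ref{Plane}. Recall that a parallelohedron in $\R^2$ is, up to an affine transformation, either a square or a hexagonal Vorono\"\i\ region, and that $\chi(\R^2,\Vert\cdot\Vert_\mathcal{P})$ is invariant under linear transformations of the norm (the unit distance graph is merely relabeled). So it suffices to treat these two combinatorial types.

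First I would record the upper bound: by the proposition just proved, $\chi(\R^2,\Vert\cdot\Vert_\mathcal{P})\leq 2^2=4$ for any planar parallelohedron $\mathcal{P}$. For the lower bound, I would invoke the standard fact that for any (induced) subgraph $G_\mathcal{P}$ of $G(\R^2,\Vert\cdot\Vert_\mathcal{P})$ one has $\chi(\R^2,\Vert\cdot\Vert_\mathcal{P})\geq\chi(G_\mathcal{P})\geq 1/\bar\alpha(G_\mathcal{P})$, the last inequality being the discrete analogue of $\chi(G)\geq|V|/\alpha(G)$ (any proper coloring partitions the vertices into independent sets, each of density at most $\bar\alpha(G_\mathcal{P})$, so at least $1/\bar\alpha(G_\mathcal{P})$ of them are needed; one may pass through the finite subgraphs $G_R$ via \lref{alpha1=alpha2} if one wants to be careful). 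In the square case we already exhibited $V=\{0,1\}^2$ inducing a complete graph $K_4$, giving $\bar\alpha=1/4$; in the hexagonal case Theorems~\ref{HexaRegTheo} and~\ref{HexaIrregTheo} (and their proofs) produce an induced subgraph $G_\mathcal{P}$ with $\bar\alpha(G_\mathcal{P})\leq 1/4$, hence $=1/4$ by \lref{borneinf}-type considerations — but in fact all we need is $\bar\alpha(G_\mathcal{P})\leq 1/4$, which already forces $\chi\geq 4$.

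Putting the two bounds together yields $\chi(\R^2,\Vert\cdot\Vert_\mathcal{P})=4$ in all cases. I would phrase the argument so that it literally reduces to the display already given in the excerpt:
\[
\chi(\R^2,\Vert\cdot\Vert_\mathcal{P})\ \geq\ \chi(G_\mathcal{P})\ \geq\ \frac{1}{\bar\alpha(G_\mathcal{P})}\ \geq\ 4\ \geq\ \chi(\R^2,\Vert\cdot\Vert_\mathcal{P}),
\]
the outer inequalities coming from the preceding proposition and the affine invariance, and the middle ones from the subgraphs of Section~\ref{Plane}.

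The only genuine subtlety — and the step I expect to need the most care — is the inequality $\chi(G)\geq 1/\bar\alpha(G)$ for the infinite discrete graph $G_\mathcal{P}$: one must be sure the density notion behaves well, i.e. that a proper coloring by finitely many colors really forces one color class to have density at least $1/\chi$, and that $\bar\alpha$ of the infinite graph controls the density of every independent set. This is where \lref{alpha1=alpha2} does the work, letting us pass to the finite subgraphs $G_R$ where $\chi(G_R)\geq|V_R|/\alpha(G_R)=1/\bar\alpha(G_R)$ is elementary, and then take a limit. Everything else — the classification of planar parallelohedra, affine invariance, and the value $\bar\alpha(G_\mathcal{P})=1/4$ — is already in hand from earlier in the paper.
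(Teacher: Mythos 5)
Your proposal is correct and follows essentially the same route as the paper: the upper bound $\chi\leq 2^2$ from the preceding proposition, and the lower bound $\chi(\R^2,\Vert\cdot\Vert_\mathcal{P})\geq \chi(G_\mathcal{P})\geq 1/\bar\alpha(G_\mathcal{P})\geq 4$ using the induced subgraphs of Section~\ref{Plane} (the $K_4$ for the square and the graphs behind Theorems~\ref{HexaRegTheo} and~\ref{HexaIrregTheo} for the hexagons). Your extra care about why $\chi(G)\geq 1/\bar\alpha(G)$ holds for the infinite discrete graph is a reasonable precaution but not a departure from the paper's argument.
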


\begin{coro}
Let $\mathcal{P}$ be the Vorono\"\i\  region of the lattice $A_n$ in $\R^n$. Then
$$\chi (\R^n,\Vert \cdot \Vert_\mathcal{P})=2^n .$$
\end{coro}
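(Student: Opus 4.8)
The plan is to assemble the final Corollary directly from the two ingredients already established for the lattice $A_n$: the upper bound on the chromatic number coming from the generic parallelohedron construction, and the lower bound coming from the induced subgraph $G_\mathcal{P}$ whose independence ratio we have just pinned down. Since the Vorono\"\i\ region $\mathcal{P}$ of $A_n$ is a parallelohedron, the Proposition immediately gives $\chi(\R^n,\Vert\cdot\Vert_\mathcal{P})\leq 2^n$, so only the matching lower bound remains.

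For the lower bound, first I would invoke \tref{AnTheo}, which states $m_1(\R^n,\Vert\cdot\Vert_\mathcal{P})=\tfrac{1}{2^n}$; tracing its proof, this equality is obtained precisely by showing $\bar\alpha(G_\mathcal{P})\leq\tfrac{1}{2^n}$ for the explicit induced subgraph $G_\mathcal{P}$ of $G(\R^n,\Vert\cdot\Vert_\mathcal{P})$ on the vertex set $\tfrac12 A_n^\#$ (combined with the lower bound from \pref{borneinf}). Hence $\bar\alpha(G_\mathcal{P})=\tfrac{1}{2^n}$. Next, using \lref{alpha1=alpha2}, there is a \emph{finite} induced subgraph $G_R$ of $G_\mathcal{P}$ with $\bar\alpha(G_R)$ arbitrarily close to $\tfrac{1}{2^n}$ from below, hence (by finiteness and the fact that $\bar\alpha(G_R)=\alpha(G_R)/|V_R|$ takes rational values bounded below by $1/|V_R|$) one with $\bar\alpha(G_R)\leq \tfrac{1}{2^n}$; then the standard inequality $\chi(G_R)\geq 1/\bar\alpha(G_R)\geq 2^n$ applies to this finite graph. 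Since $G_R$ is an induced subgraph of $G(\R^n,\Vert\cdot\Vert_\mathcal{P})$, we get $\chi(\R^n,\Vert\cdot\Vert_\mathcal{P})\geq \chi(G_R)\geq 2^n$.

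Combining the two bounds yields $\chi(\R^n,\Vert\cdot\Vert_\mathcal{P})=2^n$, which is exactly the statement. The same argument, with \tref{TheoPlan} in place of \tref{AnTheo} and $n=2$, gives the preceding Corollary about parallelohedra in $\R^2$; indeed the excerpt's discussion already frames this as an ``immediate consequence'' of having $\bar\alpha(G_\mathcal{P})=\tfrac{1}{2^n}$, so the proof is just a matter of citing the relevant earlier results.

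The only genuinely delicate point is the passage from the \emph{density} statement $\bar\alpha(G_\mathcal{P})=\tfrac{1}{2^n}$ to a \emph{finite} subgraph with chromatic number at least $2^n$: one must be slightly careful that $\bar\alpha(G_\mathcal{P})$ being a $\limsup$ of finite independence ratios does not, a priori, guarantee a single finite $G_R$ whose ratio is $\leq \tfrac{1}{2^n}$ exactly rather than merely converging to it. This is handled by \lref{alpha1=alpha2} together with the observation that if every finite subgraph had independence ratio strictly greater than $\tfrac{1}{2^n}$ one could still only conclude $\chi(G_R)\geq \lceil 1/\bar\alpha(G_R)\rceil$, and since $1/\bar\alpha(G_\mathcal{P})=2^n$ is an integer, the ceiling of $1/\bar\alpha(G_R)$ is at least $2^n$ as soon as $\bar\alpha(G_R)\leq \tfrac{1}{2^n-1}\cdot\tfrac{2^n-1}{2^n}$, i.e. for $R$ large enough by \lref{alpha1=alpha2}. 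Thus no essentially new work is required beyond carefully chaining the results already proved.
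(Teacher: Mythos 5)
Your argument is correct in substance and uses the same two ingredients as the paper: the $2^n$-coloring from the Proposition for the upper bound, and the induced subgraph $G_\mathcal{P}$ on $\frac{1}{2}A_n^\#$ with $\bar{\alpha}(G_\mathcal{P})=2^{-n}$ (from the proof of \tref{AnTheo}) for the lower bound. The paper is more direct at the last step: it applies $\chi(\R^n,\Vert\cdot\Vert_\mathcal{P})\geq\chi(G_\mathcal{P})\geq 1/\bar{\alpha}(G_\mathcal{P})$ to the infinite discrete graph itself (a $c$-coloring of $G_\mathcal{P}$ partitions $V$ into $c$ independent sets, and subadditivity of the $\limsup$ in the definition of $\delta_G$ forces one of them to have density at least $1/c$), so no passage to finite subgraphs is needed. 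Your detour through finite $G_R$ does work, but two intermediate claims need repair: rationality of $\bar{\alpha}(G_R)$ does not yield a finite $R$ with $\bar{\alpha}(G_R)\leq 2^{-n}$, since the ratios could approach $2^{-n}$ strictly from above with growing denominators $|V_R|$; and the threshold you state at the end, $\frac{1}{2^n-1}\cdot\frac{2^n-1}{2^n}$, simplifies to exactly $2^{-n}$, which reintroduces the very problem you were addressing and is not guaranteed ``for $R$ large enough'' by \lref{alpha1=alpha2}. The correct form of your integrality argument is: since $\limsup_{R\to\infty}\bar{\alpha}(G_R)=\bar{\alpha}(G_\mathcal{P})\leq 2^{-n}<\frac{1}{2^n-1}$, one has $\bar{\alpha}(G_R)<\frac{1}{2^n-1}$ for all sufficiently large $R$, hence $\chi(G_R)\geq\lceil 1/\bar{\alpha}(G_R)\rceil\geq 2^n$, and then $\chi(\R^n,\Vert\cdot\Vert_\mathcal{P})\geq\chi(G_R)\geq 2^n$; with this fix your proof is complete.
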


\begin{rema}
We want to point out the fact that in dimension $2$, one can find a \emph{finite} induced subgraph of $G(\R^n,\Vert \cdot \Vert_\mathcal{P})$ with chromatic number $4$. Indeed,  the induced subgraph of $G(\R^n,\Vert \cdot \Vert_\mathcal{P})$ whose vertices and 
edges are drawn  in \fref{Chi} is easily seen to have chromatic number $4$.

\begin{figure}[!ht]
\includegraphics[scale=0.7]{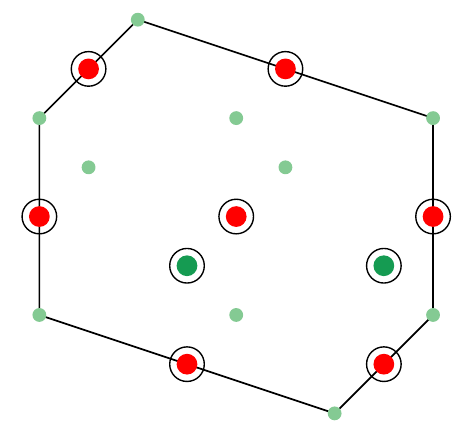}
\hspace{1cm}
\includegraphics[scale=0.7]{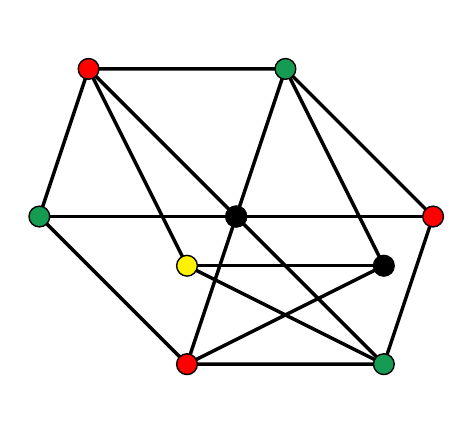}
\caption{$\chi(\R^2,\Vert \cdot \Vert_\mathcal{P})\geq 4$.\label{Chi}}
\end{figure}
\end{rema}

\appendix

\section{The proof of lemma \ref{alpha1=alpha2}}\label{AppA}

This appendix is dedicated to the proof of Lemma \ref{alpha1=alpha2}, which gives two equivalent formulations of the independence ratio 
of a discrete graph whose vertices have finite degrees. The importance of the assumption on the degrees of the vertices will be discussed after the proof. The statement of the lemma  is reproduced below:

\setcounter{lemm}{0}

\begin{lemm}
Let $G=(V,E)$ be a graph such that its vertex set $V$ is a discrete subset of $\R^n$, and such that every vertex  has finite degree. Then
$$ \bar{\alpha}(G)=\limsup_{R\to \infty} \bar{\alpha}(G_R),$$
where $G_R$ is the finite induced subgraph of $G$ whose set of vertices is $V_R=V\cap [-R,R]^n$.
\end{lemm}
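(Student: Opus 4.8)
The plan is to prove both inequalities separately. The easy direction is $\limsup_{R\to\infty}\bar\alpha(G_R)\le\bar\alpha(G)$: given any finite induced subgraph $G_R$ and an independent set $I_R$ of $G_R$ of size $\alpha(G_R)$, one builds a genuinely infinite independent set $A$ of $G$ whose density witnesses $\bar\alpha(G)\ge\bar\alpha(G_R)$. The natural trick, which I would spell out, is to tile $\R^n$ by translates of the cube $[-R,R]^n$ along the lattice $2R\Z^n$ and place a translated copy of $I_R$ in each cube; this set is not automatically independent in $G$ because edges may join vertices of adjacent cubes, but one repairs it by working at a coarser scale (take $R'\gg R$, or delete a bounded-width "buffer" region near cube boundaries) and using finiteness of degrees to control how many vertices one must throw away, so that the density is at least $\bar\alpha(G_R)-o(1)$. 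Actually it is cleaner to first note $\bar\alpha(G_{R})$ is essentially monotone-ish / to pass to the $\limsup$ at the end. Either way this direction is routine.

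The substantive direction is $\bar\alpha(G)\le\limsup_{R\to\infty}\bar\alpha(G_R)$. Let $A$ be an independent set of $G$ with $\delta_G(A)$ close to $\bar\alpha(G)$. For each $R$, the set $A\cap V_R$ is independent in $G_R$, so $|A\cap V_R|\le\alpha(G_R)=\bar\alpha(G_R)|V_R|$, and dividing by $|V_R|$ and taking $\limsup_{R\to\infty}$ gives $\delta_G(A)\le\limsup_{R\to\infty}\bar\alpha(G_R)$ directly. This looks immediate — and in fact it is, which is why the lemma is "true"; the delicate point the appendix is flagging is not this inequality but whether the $\limsup$ on the right is even the right object, i.e. whether $\bar\alpha(G)$ could fail to be \emph{attained} or whether $\bar\alpha(G_R)$ could behave pathologically when degrees are infinite. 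So the real content I would present is: (i) the above one-line argument for $\le$; (ii) the buffer-deletion construction for $\ge$; and (iii) a careful check that finiteness of degrees is exactly what makes the buffer-deletion lose only a vanishing fraction of vertices — with a counterexample (e.g. a graph where $0$ is adjacent to a sequence of points escaping to infinity, or where every vertex in some box has an edge to far away) showing $\ge$ can fail without the degree hypothesis.

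The main obstacle is step (ii): making the density bookkeeping rigorous. One must handle that $V$ need not be a lattice, that $|V_R|$ need not grow regularly in $R$, and that $\limsup$'s do not commute with the tiling construction. I would organize it by fixing $\varepsilon>0$, choosing $R$ with $\bar\alpha(G_R)>\limsup_R\bar\alpha(G_R)-\varepsilon$ (possible by definition of $\limsup$), then choosing a scale $L=kR$ and a buffer width $b$ (depending on $R$ and on a uniform degree bound valid only locally — here one uses that each of the finitely many vertices in a fixed box has finite degree, so there is a finite radius beyond which a given box has no edges) so that deleting the buffer from each $L$-cube destroys all cross-cube edges while removing at most an $\varepsilon$-fraction of vertices asymptotically; then $\delta_G$ of the resulting independent set is at least $(1-\varepsilon)(\bar\alpha(G_R)-\varepsilon)$, and letting $\varepsilon\to0$ finishes. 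The one genuinely annoying technical wrinkle is that "finite degree at every vertex" does not give a \emph{uniform} bound on the buffer width over all of $\R^n$, so the buffer width must itself be allowed to vary with position; I would address this by a compactness/exhaustion argument restricted to each finite region $V_{L}$ at a time, which suffices because densities are computed as limits over growing boxes.
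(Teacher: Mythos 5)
Your direction (i) is fine and is exactly the paper's easy direction: restricting an independent set $A$ to $V_R$ gives $|A\cap V_R|\le \alpha(G_R)$, hence $\delta_G(A)\le\limsup_R\bar\alpha(G_R)$, with no degree hypothesis needed. The genuine gap is in (ii), the inequality $\limsup_R\bar\alpha(G_R)\le\bar\alpha(G)$, which is precisely where the finite-degree hypothesis matters (your instinct about the counterexample is right in spirit: the paper's example, where every negative integer has infinite degree, makes exactly this direction fail). Your proposed construction — tile $\R^n$ by $2R\Z^n$-translates of $[-R,R]^n$ and place a translated copy of a maximum independent set $I_R$ of $G_R$ in each cube, then repair with buffers — presupposes a translation invariance that the lemma does not assume. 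Here $G$ is an arbitrary graph on an arbitrary discrete set $V\subset\R^n$: a translate of $I_R$ by a vector of $2R\Z^n$ need not consist of vertices of $G$ at all, and even when it does, the edges of $G$ on that translate bear no relation to the edges on $I_R$, so neither independence nor any density estimate is inherited. The buffer deletion only addresses cross-cube edges, not this basic obstruction, so the scheme cannot be repaired within your framework; and using a single large box instead does not help, since one finite set has density zero in an infinite graph, so the whole difficulty is in combining infinitely many finite optima into one independent set.

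The paper's proof does this combination by exhaustion rather than periodization: it picks radii $r_\ell$ realizing the $\limsup$, passes to a subsequence $R_\ell$ chosen so that each box contains the closed neighborhood of the previous one (this is where finite degree enters, via the finiteness of $N[V_R]$, which defines the radius $b(R)$) and so that successive rings $V_{R_{\ell+1}}\setminus V_{R_\ell}$ are at least as large as their predecessors. It then defines, for each $k$, the set $S_k$ consisting of the vertices of $A_{R_{ik}}$ (a maximum independent set of $G_{R_{ik}}$) that avoid the closed neighborhoods of all earlier layers $A_{R_{jk}}$, $j<i$; this layered greedy set is independent, and an induction using the ring-growth condition shows its density along the boxes $V_{R_{ik}}$ is at least $\limsup_R\bar\alpha(G_R)-\tfrac1k$. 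That position-dependent, nested bookkeeping is the real content of the lemma, and it is what your tiling-plus-buffer outline does not supply.
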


\setcounter{lemm}{16}

\begin{proof} Let $G$ be a graph satisfying the assumptions of the lemma. First of all, we remark that
if $G$ is finite, there exists $R$ such that $G_R=G$. Thus, $\overline{\alpha}(G)= \limsup_{R\to\infty}\overline{\alpha} (G_R)$ is obvious.
From now on, we will assume that $G$ has infinitely many vertices.

The inequality $\overline{\alpha}(G)\leq \limsup_{R\to\infty}\overline{\alpha} (G_R)$ clearly holds. Indeed, if $A$ is an independent set of $G$, then $A\cap V_R$ is an independent set of $G_R$ and so $\frac{|A\cap V_R|}{|V_R|}\leq \overline{\alpha}(G_R)$, leading to
$\delta_G(A)\leq \limsup_{R\to\infty}\overline{\alpha}(G_R)$.

We will  prove the reverse inequality by exhibiting a sequence of independent sets $S_k$ such that, for all $k\geq 1$, $\limsup_{R\to\infty} \frac{|S_k\cap V_R|}{|V_R|}\geq \limsup_{R\to\infty}\overline{\alpha} (G_R) -\frac 1 k$.

Let $r_\ell$ be a strictly increasing sequence of real numbers tending to infinity and such that 
$\lim_{\ell\to\infty} \overline{\alpha}(G_{r_\ell})=\limsup_{R\to\infty}\overline{\alpha} (G_R)$, and let $A_{r_\ell}$ be an independent subset of $V_{r_\ell}$ of maximal cardinality. The set $S_k$ will be constructed from the
sequence of independent sets $A_{r_\ell}$; however, we will need, for reasons that will appear more clearly later, that the successive rings $V_{r_\ell}\setminus V_{r_{\ell-1}}$ are sufficiently large. In view of that, we construct a convenient subsequence 
of $r_\ell$, with the help of a function $\varphi(\ell)$, in the following way. 

Since the graph $G$ is discrete, we know that for all $R$, $V_R$ is finite and since all the vertices of the graph are of finite degree, we know that the neighborhood $N[V_R]$ is finite too. We call $b(R)$ the smallest real number such that $N[V_R]\subset V_{b(R)}$. Then, we set $\varphi(0)=0$ and, inductively for $\ell\geq 0$,
\[
\varphi(\ell+1)=\min\left\{i\mid r_{i}\geq b(r_{\varphi(\ell)})\text{ and }|V_{r_{i}}\setminus V_{r_{\varphi(\ell)}}|\geq |V_{r_{\varphi(\ell)}}\setminus V_{r_{\varphi(\ell-1)}}|\right\}.\]

The existence of $\varphi(\ell+1)$ at each step of the recursion holds because $\lim_{\ell\to\infty} r_\ell=+\infty$ and  $V_{r_{\varphi(\ell)}}\setminus V_{r_{\varphi(\ell-1)}}$ is finite (since $G$ is discrete). To keep the notations simple, we set $R_\ell=r_{\varphi(\ell)}$.

We will need the following property of the number of elements of the rings associated to the sequence $R_\ell$:

\begin{prop}
For all $\ell\in \mathbb N$, for all $m\in\mathbb N^{\ast}$:

\[ |V_{R_{\ell+1}}\setminus V_{R_{\ell}}|\leq \frac 1 m |V_{R_{\ell+m}}\setminus V_{R_{\ell}}|\]
\label{1i}
\end{prop}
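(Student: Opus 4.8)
The plan is to prove this by induction on $m$, exploiting the defining property of the subsequence $R_\ell = r_{\varphi(\ell)}$, namely that $|V_{r_{\varphi(\ell)}}\setminus V_{r_{\varphi(\ell-1)}}|$ is monotone non-decreasing in $\ell$: by construction $\varphi(\ell+1)$ was chosen so that $|V_{R_{\ell+1}}\setminus V_{R_{\ell}}|\geq |V_{R_{\ell}}\setminus V_{R_{\ell-1}}|$. So the ``ring sizes'' $a_\ell := |V_{R_{\ell+1}}\setminus V_{R_\ell}|$ form a non-decreasing sequence, and the quantity $|V_{R_{\ell+m}}\setminus V_{R_\ell}|$ is just the telescoping sum $a_\ell + a_{\ell+1} + \cdots + a_{\ell+m-1}$, since the rings $V_{R_{\ell+1}}\setminus V_{R_\ell}, V_{R_{\ell+2}}\setminus V_{R_{\ell+1}},\ldots$ are pairwise disjoint and their union is $V_{R_{\ell+m}}\setminus V_{R_\ell}$.

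Given that reduction, the statement becomes the elementary fact that for a non-decreasing sequence of non-negative reals $a_\ell\leq a_{\ell+1}\leq \cdots$, one has $a_\ell \leq \frac{1}{m}(a_\ell + a_{\ell+1}+\cdots+a_{\ell+m-1})$, i.e. the first term is at most the average of the first $m$ terms. This is immediate: each of the $m$ summands $a_{\ell+j}$ with $0\leq j\leq m-1$ satisfies $a_{\ell+j}\geq a_\ell$, so the sum is at least $m\, a_\ell$. I would either write this directly or phrase it as a one-line induction on $m$ (the base case $m=1$ is trivial, and the step uses $|V_{R_{\ell+m+1}}\setminus V_{R_\ell}| = |V_{R_{\ell+m+1}}\setminus V_{R_{\ell+m}}| + |V_{R_{\ell+m}}\setminus V_{R_\ell}| \geq |V_{R_{\ell+1}}\setminus V_{R_\ell}| + m\,|V_{R_{\ell+1}}\setminus V_{R_\ell}| = (m+1)|V_{R_{\ell+1}}\setminus V_{R_\ell}|$, invoking monotonicity of the ring sizes and the inductive hypothesis applied at index $\ell+1$ — one must be slightly careful to apply the hypothesis at the right starting index).

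There is essentially no serious obstacle here; the content is entirely bookkeeping. The only point demanding a little care is to make sure the monotonicity $a_\ell \leq a_{\ell+1}$ is exactly what the construction of $\varphi$ guarantees — it is, by the second condition in the definition of $\varphi(\ell+1)$ — and that $V_{R_\ell}\subset V_{R_{\ell+1}}$ so that the set differences behave additively (this holds because $r_i$ is increasing, hence $R_\ell$ is increasing, hence the cubes $[-R_\ell,R_\ell]^n$ are nested). I expect the whole argument to be three or four lines.
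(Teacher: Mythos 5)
Your argument is correct and is essentially the paper's own proof: the paper likewise writes $|V_{R_{\ell+m}}\setminus V_{R_{\ell}}|$ as the telescoping sum of the successive ring sizes and notes that each term dominates $|V_{R_{\ell+1}}\setminus V_{R_{\ell}}|$ by the monotonicity built into the definition of $\varphi$. Your extra remarks (nestedness of the $V_{R_\ell}$, the optional induction on $m$) are just bookkeeping on the same idea.
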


\begin{proof}
We have $|V_{R_{\ell+m}}\setminus V_{R_{\ell}}|=\underset{k=0}{\overset {m-1} {\sum}} |V_{R_{\ell+k+1}}\setminus V_{R_{\ell+k}}|$ and each term of the sum is larger than $|V_{R_{\ell+1}}\setminus V_{R_{\ell}}|$, by definition of $\varphi$.
\end{proof}

Now we are ready to define the sets $S_k$. We set, for $k\geq 0$,

\[S_k:=\left\{v\in V\mid \exists i\in\mathbb N\text{ such that }v\in A_{R_{ik}}\text{ and }\forall j<i, v\notin N[A_{R_{jk}}]
\right\}.\]

It remains to prove that $S_k$ is an independent set and satisfies the inequality  $\limsup_{R\to+\infty} \frac{|S_k\cap V_R|}{|V_R|}\geq \limsup_{R\to\infty}\overline{\alpha} (G_R) -\frac 1 k$.
\vspace{0.35cm}
\begin{prop} 
$S_k$ is independent. 
\end{prop}

\begin{proof}
Let $v_1$ and $v_2$ be two vertices of $S_k$ and let $i_1$ and $i_2$ be such that $v_1\in A_{R_{i_1k}}$, $v_2\in A_{R_{i_2k}}$ and for all $j<i_1$ (respectively $i_2$), $v_1$ (resp. $v_2$) $\notin N[A_{R_{jn}}]$. 
If $i_1=i_2$, then $v_1$ and $v_2$ both belong to $A_{R_{i_1k}}$ which is independent, consequently they are not connected. If say, $i_1>i_2$, from the very definition of $S_k$, $v_2\notin N[A_{R_{i_1k}}]$, so $v_1$ and $v_2$ are not connected either.
\end{proof}

\begin{lemm}\label{induc}
For all $k\geq 1$, $i\geq 0$, $\frac{|S_k\cap V_{R_{ik}}|}{|V_{R_{ik}}|}\geq \frac{|A_{R_{ik}}|}{|V_{R_{ik}}|}-\frac 1 k$.
\end{lemm}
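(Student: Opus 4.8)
The plan is to prove the inequality by induction on $i$, working not with $S_k\cap V_{R_{ik}}$ directly but with a slightly smaller, more structured set. First I would introduce, for $i\geq 0$, the partial sets
\[ S_k^{(i)}=\bigl\{v\in V\mid \exists\, i'\leq i,\ v\in A_{R_{i'k}}\ \text{and}\ \forall j<i',\ v\notin N[A_{R_{jk}}]\bigr\}, \]
and observe that every element of $S_k^{(i)}$ lies in some $A_{R_{i'k}}\subseteq V_{R_{i'k}}\subseteq V_{R_{ik}}$ and meets the defining condition of $S_k$, so that $S_k^{(i)}\subseteq S_k\cap V_{R_{ik}}$. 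It therefore suffices to establish $|S_k^{(i)}|\geq |A_{R_{ik}}|-\tfrac1k|V_{R_{ik}}|$, and the base case $i=0$ is immediate since $S_k^{(0)}=A_{R_0}$.

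The device that makes the induction run is the collapse of the neighborhood union: for $i\geq 1$ one has $\bigcup_{j<i}N[A_{R_{jk}}]=N[A_{R_{(i-1)k}}]$. This is because each $A_{R_{jk}}$, being a maximum and hence maximal independent set of $G_{R_{jk}}$, dominates $V_{R_{jk}}$, so $V_{R_{jk}}\subseteq N[A_{R_{jk}}]\subseteq N[V_{R_{jk}}]\subseteq V_{b(R_{jk})}\subseteq V_{R_{jk+1}}$, the last inclusion holding by the definition of $\varphi$; and since $k\geq 1$ forces $jk+1\leq (j+1)k$, every term with $j\leq i-2$ is swallowed by $V_{R_{(i-1)k}}\subseteq N[A_{R_{(i-1)k}}]$. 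Granting this, $S_k^{(i)}\setminus S_k^{(i-1)}$ is precisely $A_{R_{ik}}\setminus N[A_{R_{(i-1)k}}]$, which telescopes to $|S_k^{(i)}|=|S_k^{(i-1)}|+|A_{R_{ik}}|-|A_{R_{ik}}\cap N[A_{R_{(i-1)k}}]|$. To finish the inductive step I would bound the last term by splitting $N[A_{R_{(i-1)k}}]$ into $V_{R_{(i-1)k}}$ and a subset of the ring $V_{R_{(i-1)k+1}}\setminus V_{R_{(i-1)k}}$: the first piece contributes at most $|A_{R_{(i-1)k}}|$ because $A_{R_{ik}}\cap V_{R_{(i-1)k}}$ is an independent set of $G_{R_{(i-1)k}}$, while the ring has size at most $\tfrac1k|V_{R_{ik}}\setminus V_{R_{(i-1)k}}|$ by \pref{1i} applied with $\ell=(i-1)k$ and $m=k$. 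Inserting these bounds together with the inductive hypothesis $|S_k^{(i-1)}|\geq |A_{R_{(i-1)k}}|-\tfrac1k|V_{R_{(i-1)k}}|$ gives exactly $|S_k^{(i)}|\geq |A_{R_{ik}}|-\tfrac1k|V_{R_{ik}}|$.

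The hard part, I expect, is recognizing the neighborhood collapse and choosing to recurse through the sets $S_k^{(i)}$; a naive attempt to compare $S_k\cap V_{R_{ik}}$ with $S_k\cap V_{R_{(i-1)k}}$ does not telescope cleanly, because the excess of $A_{R_{ik}}$ over $A_{R_{(i-1)k}}$ is hard to locate. Everything else is bookkeeping, but it is worth noting where the hypotheses bite: finite degree of the vertices is what makes $b(R)$ finite and the inclusion $N[A_{R_{jk}}]\subseteq V_{b(R_{jk})}$ legitimate, and the engineered growth of the rings recorded in \pref{1i} is exactly what pins the error term at $\tfrac1k$.
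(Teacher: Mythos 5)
Your proof is correct and follows essentially the same route as the paper's: an induction on $i$ in which the part of $A_{R_{ik}}$ lying in $V_{R_{(i-1)k}}$ is controlled by the maximality of $A_{R_{(i-1)k}}$, while the spill-over of the earlier neighborhoods is confined to the sliver $V_{R_{(i-1)k+1}}\setminus V_{R_{(i-1)k}}$, bounded via $b$ and \pref{1i} with $m=k$. The only difference is presentational: you recurse through the partial sets $S_k^{(i)}$ and use the exact collapse $\bigcup_{j<i}N[A_{R_{jk}}]=N[A_{R_{(i-1)k}}]$ (which requires the additional, correct, observation that a maximum independent set dominates its graph), whereas the paper splits $S_k\cap V_{R_{(i+1)k}}$ directly into box and ring and only needs the inclusion $N[A_{R_{jk}}]\subseteq V_{b(R_{ik})}$ for $j\leq i$.
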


\begin{proof}
 We prove the lemma by induction on $i$:
 
 $\bullet$ The property holds for $i=0$ since $S_k\cap V_{R_0}$ contains $A_{R_0}$.
 
 $\bullet$ Let $i\in\mathbb N$ be such that the property holds. We have:
\begin{equation}\label{eq1}
\frac{|S_k\cap V_{R_{(i+1)k}}|}{|V_{R_{(i+1)k}}|}=\frac{|S_k\cap V_{R_{ik}}|}{|V_{R_{(i+1)k}}|}+\frac{|S_k\cap (V_{R_{(i+1)k}}\setminus V_{R_{ik}})|}{|V_{R_{(i+1)k}}|}.
\end{equation}

Let us lower bound the two terms of this sum one after the other.

$\blacktriangleright$ Since $A_{R_{ik}}$ is an independent set of maximal cardinality in $V_{R_{ik}}$, we know that  
$|A_{R_{ik}}|\geq |A_{R_{(i+1)k}}\cap V_{R_{ik}}|$. Combining with the induction hypothesis, we find
\[\frac{|S_k\cap V_{R_{ik}}|}{|V_{R_{ik}}|}\geq\frac{|A_{R_{(i+1)k}}\cap V_{R_{ik}}|}{|V_{R_{ik}}|}-\frac 1 k\]
and thus:
\begin{equation}\label{eq2}\frac{|S_k\cap V_{R_{ik}}|}{|V_{R_{(i+1)k}}|}\geq\frac{|A_{R_{(i+1)k}}\cap V_{R_{ik}}|}{|V_{R_{(i+1)k}}|}-\frac 1 k \frac {|V_{R_{ik}}|} {|V_{R_{(i+1)k}}|}.\end{equation}

$\blacktriangleright$ By definition, $S_k$ contains all the vertices of $A_{R_{(i+1)k}}$ except those who are in the neighborhood of an $A_{R_{jk}}$ with $j<i+1$. Since for all $j<i$, $N[A_{R_{jk}}]\subset V_{b(R_{ik})}$, the set 
$S_k\cap (V_{R_{(i+1)k}}\setminus V_{R_{ik}})$ contains  $A_{R_{(i+1)k}}\setminus V_{b(R_{ik})}$.
We also have by construction that $b(R_{ik})\leq R_{ik+1}$. 
Thus, \[|V_{b(R_{ik})}\setminus V_{R_{ik}}|\leq |V_{R_{ik+1}}\setminus V_{R_{ik}}|\leq \frac 1 k |V_{R_{(i+1)k}}\setminus V_{R_{ik}}|\]
where the second inequality follows from Proposition \ref{1i}.

This leads to the following inequality:
\begin{equation}\label{eq3}\frac{|S_k\cap (V_{R_{(i+1)k}}\setminus V_{R_{ik}})|}{|V_{R_{(i+1)k}}|}\geq \frac{|A_{R_{(i+1)k}}\setminus V_{R_{ik}}|}{|V_{R_{(i+1)k}}|}-\frac 1 k \frac{|V_{R_{(i+1)k}}\setminus V_{R_{ik}}|}{|V_{R_{(i+1)k}}|}\end{equation}

By combining equations (\ref{eq1}), (\ref{eq2}) and (\ref{eq3}), we find:
\[\frac{|S_k\cap V_{R_{(i+1)k}}|}{|V_{R_{(i+1)k}}|}\geq \frac{|A_{R_{(i+1)k}}|}{|V_{R_{(i+1)k}}|}-\frac 1 k\]
which concludes the proof of  Lemma \ref{induc}.
\end{proof}


Now we are ready to conclude the proof of Lemma \ref{alpha1=alpha2}. Indeed, for all $k\geq 1$, we have

\[\begin{split}
\overline{\alpha}(G)=\sup_S \quad \limsup_{R\to+\infty} \frac{|S\cap V_R|}{|V_R|}&\geq \limsup_{R\to+\infty} \frac{|S_n\cap V_R|}{|V_R|}\\
&\geq \lim_{i\to\infty} \frac{|S_k\cap V_{R_{ik}}|}{|V_{R_{ik}}|}\\
&\geq \lim_{i\to\infty}\frac{|A_{R_{ik}}|}{|V_{R_{ik}}|}-\frac 1 k\\
&\geq \limsup_{R\to\infty}\overline{\alpha} (G_R) -\frac 1 k.
  \end{split}\]

In the limit when $k\to \infty$, we obtain that $\overline{\alpha}(G)\geq \limsup_{R\to\infty}\overline{\alpha} (G_R)$. 
\end{proof}

To conclude our discussion of Lemma \ref{alpha1=alpha2}, we would like to point out that the inequality $\overline{\alpha}(G)\leq \limsup_{R\to\infty}\overline{\alpha} (G_R)$ does not necessarily hold if $G$ has vertices with infinite degree, by bringing out a counterexample.

Let $G$ be the graph given by $V=\mathbb Z$ and $E=\{\{a,b\}|a<0\text{ and }b> -2a\}$.

Let $N\in\mathbb N$ and let $S_N=\left[\!\!\left[ -N,-\frac N 2\right]\!\!\right] \cup[\![ 0,N]\!]$. One can see 
easily that $S_N$ is independent in $G$. Hence, $\limsup_{R\to\infty}\overline{\alpha} (G_R)\geq\lim_{N\to\infty}\frac{|S_N|}{|V_N|}=\frac 3 4 $.

Let $S$ be an independent set of $G$. If $S$ contains a vertex indexed by a negative integer $-k$, it cannot contain any vertex indexed by $i>2k$ and can therefore only contain finitely many vertices indexed by positive integers. Hence, $\lim_{N\to\infty}\frac{|S\cap V_N|}{|V_N|}\leq \frac 1 2$. If $S$ does not contain any vertex indexed by a negative integer, the inequality $\lim_{N\to\infty}\frac{|A\cap V_N|}{|V_N|}\leq \frac 1 2$ holds aswell. Thus, \[\sup_S \limsup_{N\to+\infty} \frac{|S\cap V_N|}{|V_N|}\leq \frac 1 2\]
which proves that $\overline{\alpha}(G)\neq \limsup_{R\to\infty}\overline{\alpha} (G_R)$.

\section*{Acknowledgements}
We would like to thank Sinai Robins for suggesting the polytopal norms as an alternative to the standard Euclidean norm in the study of $1$-avoiding sets, and for helpful discussions on this subject.

\bibliographystyle{plain}
\bibliography{Biblio}

\end{document}